%
%
%
%
%
%
\RequirePackage{fix-cm}
\documentclass[smallextended]{svjour3}       
\smartqed  
\usepackage{graphicx}

\usepackage{bbm}
\usepackage{amsmath}
\pdfoutput=1
\usepackage{amsfonts}
\usepackage{amssymb}
\usepackage{epsfig}
\usepackage{slashbox}
\setcounter{page}{1}
\usepackage{latexsym}
\usepackage[margin=1in]{geometry}
\usepackage{algpseudocode,algorithm}

\newcommand{\adots}{\mathinner{\mkern1mu\raise1pt\vbox{\kern1pt\hbox{.}}
\mkern2mu\raise4pt\hbox{.}\mkern2mu\raise7pt\hbox{.}\mkern1mu}}

\makeatletter
\def\@setcopyright{}
\def\serieslogo@{}
\makeatother
%
%
%
%
%
\begin{document}

\title{\bf  Inverting Incomplete Fourier Transforms by a Sparse Regularization Model and Applications in Seismic  Wavefield Modeling
\thanks{
T. Wu was supported in part by the Natural Science Foundation of Shandong Province of China under grants ZR2021MA049, ZR2020MA031, and Shandong Province Higher Educational Science and Technology Program of China under grant J18KA221. Y. Xu was supported in part by the US National Science Foundation under grant DMS-1912958 and by the US National Institutes of Health under grant R21CA263876. The datasets generated and analysed during the current study are not publicly available but are available from the corresponding author on reasonable request.}}

\titlerunning{Inverting Incomplete FT by a Sparse Regularization Model and Applications in SW Modeling}        

\author{Tingting Wu\ \ and Yuesheng Xu}


\institute{T. Wu\\
 School of Mathematics and Statistics, Shandong Normal University, Jinan 250358, P.R. China. tingtingwu@sdnu.edu.cn.
           \\
           Y. Xu\\
            Department of Mathematics and Statistics, Old Dominion University, Norfolk, VA 23529, USA.  y1xu@odu.edu. All correspondence should be sent to this author.
}

\date{Received: date / Accepted: date}

\maketitle

\begin{abstract}
We propose a sparse regularization model for inversion of incomplete Fourier transforms and apply it to seismic wavefield modeling. The objective function of the proposed model employs the Moreau envelope of the $\ell_0$ norm under a tight framelet system as a regularization to promote sparsity. This model leads to a non-smooth, non-convex optimization problem for which traditional iteration schemes are inefficient or even divergent. By exploiting special structures of the $\ell_0$ norm,  we identify a local minimizer of the proposed non-convex optimization problem with a global minimizer of a convex optimization problem, which provides us insights for the development of efficient and convergence guaranteed algorithms to solve it. We characterize the solution of the regularization model in terms of a fixed-point of a map defined by the proximity operator of the $\ell_0$ norm  and develop a fixed-point iteration algorithm to solve it. By connecting the map with an $\alpha$-averaged nonexpansive operator, we prove that the sequence generated by the proposed fixed-point proximity algorithm converges to a local minimizer of the proposed model. Our numerical examples confirm that the proposed model outperforms significantly the existing model based on the $\ell_1$-norm. The seismic wavefield modeling in the frequency domain requires solving a series of the Helmholtz equation with large wave numbers, which is a computationally intensive task. Applying the proposed sparse regularization model to the seismic wavefield modeling requires data of only a few low frequencies, avoiding solving the Helmholtz equation with large wave numbers. This makes the proposed model particularly suitable for the seismic wavefield (SW) modeling. Numerical results show that the proposed method performs better than the existing method based on the $\ell_1$ norm in terms of the SNR values and visual quality of the restored synthetic seismograms.
\keywords{incomplete Fourier transforms, $\ell_0$ norm, sparse regularization, 
fixed-point proximity algorithms, seismic wavefield modeling.}
\subclass{MSC 65.}
\end{abstract}

\section{Introduction}

The aim of this study is to develop a sparse regularization model for inverting incomplete Fourier transforms and an efficient, convergence guaranteed iteration algorithm
for solving the resulting non-convex, non-smooth optimization problem. Moreover, we apply the developed method to seismic wavefield modeling.
Incomplete Fourier transforms arise in many engineering problems \cite{Brigham,H3,Riyanti-Kononov-Erlanggga-Vuik,WSX}. They are of special interest as reconstructing a digital signal or image from incomplete Fourier data has important applications in biomedical imaging (MRI and tomography), astrophysics (interferometric imaging), and geophysical exploration. Reconstruction of a digital signal or image from incomplete Fourier transform data is an ill-posed problem, which often produces aliasing artifacts due to  vast undersampling and distortion which means that the reconstructed signal is not like what we expect. Therefore, it is crucial to develop an effective inversion model which alleviates the artifacts and distortion, and design an efficient algorithm to solve the resulting optimization problem.

To overcome the difficulty caused by incomplete data, inverting incomplete Fourier transforms has been investigated in the context of sparse signal/image processing.
Compressed sensing \cite{Candes-Romberg-Tao1,Candes-Romberg-Tao2,Donoho} was used in  \cite{Candes-Romberg-Tao1,Lin_Herrmann,LDP}  to invert incomplete Fourier transforms. Specifically, the paper \cite{Candes-Romberg-Tao1} applied the $\ell_1$ norm as a regularization to reconstruct an object from randomly chosen incomplete
frequency samples. In \cite{Lin_Herrmann,Lin_Lebed_Erlangga}, the problem of inverting incomplete Fourier transforms is also considered in forward wavefield extrapolation. While in \cite{LDP}, the compressed sensing method was applied for rapid MR imaging, which employed an $\ell_1$-norm model to invert incomplete Fourier transforms. We also developed a sparse regularization method in \cite{WSX} for inverting incomplete Fourier transforms.
Both the compressed sensing method and the sparse regularization method employ the $\ell_1$ norm as a regularization to impose sparsity for the reconstructed signal under certain transforms. Because the $\ell_1$-norm based models are convex, they can be solved efficiently by available tools \cite{CSXZ-12,Gold-Osher,Krol-Li-Shen-Xu,Li-Shen-Xu-Zhang,MSX,Micchelli-Shen-Xu-Zengprox2}.  However, according to \cite{Fan-Li2001}, the $\ell_1$-norm based models can lead to outliers and thus, there is a need to develop more effective, robust models.

The main purpose of this research is to propose a model which can reduce both artifacts and outliers in the reconstructed signal and can be efficiently solved.
To this end, we propose to use the Moreau envelope of the $\ell_0$ norm as a sparsity promoting function as a regularization. That is, we will invert incomplete Fourier transforms with a sparsity penalty, under a framelet transform, of the envelope of the $\ell_0$ norm. Note that the sparsity of a vector is originally measured by the number of its nonzero components, namely, the $\ell_0$ norm of the vector. However, the $\ell_0$ norm is discontinuous at the origin, which is not favorable from a computational viewpoint. The envelope of the $\ell_0$ norm is a continuous surrogate of the $\ell_0$ norm. Although the $\ell_0$ norm is non-convex, according to \cite{Xu},  due to the special structure of the $\ell_0$ norm, a local minimizer of a function that is the sum of a convex function and the $\ell_0$ norm can be identified with a global minimizer of the convex function. This fact provides us with great convenience for algorithmic development of optimization problems of this type. The use of the $\ell_0$ norm enables us to formulate a sparsity regularization model, for inverting incomplete Fourier transforms, which can reduce artifacts and outliers in the reconstructed signal, and allow us to design an efficient fixed-point iteration algorithm for the resulting non-convex, non-smooth optimization problem. Moreover, by exploiting the connection of this minimization problem with the related {\it convex} minimization problem, we are able to establish convergence of the proposed fixed-point algorithm.

The second component of this paper is to apply the developed method for inverting incomplete Fourier transforms to analyzing seismic wavefield in the frequency domain.
It is well-known \cite{Lin_Herrmann} that seismic wavefield can be analyzed by inverting Fourier transforms. In this approach, we need to solve the Helmholtz equation with wave numbers that correspond to Fourier frequencies. However, this approach has a major drawback: A high Fourier frequency corresponds to a large wave number and the numerical solution of the Helmholtz equation with a large wave number is a challenging task due to the high oscillation in its solution \cite{B2}. The proposed incomplete Fourier transform inversion method suggests that we can analyze seismic wavefield without solving the Helmholtz equation with large wave numbers. That is, using only low Fourier frequencies, we can obtain satisfactory reconstruction results by employing the developed inversion method. Therefore, the proposed incomplete Fourier transform inversion method makes the seismic wavefield modeling in the frequency domain a feasible approach.

We organize the paper in eight sections. In Section \ref{sec:MAM2}, the envelope of the $\ell_0$ norm is employed to construct a sparse regularization model for inverting
incomplete Fourier transforms. For the proposed regularization model,  an equivalent model is then presented by considering properties of the envelope of the $\ell_0$ norm. Section \ref{sec:LConvexity} is devoted to an investigation of a local convexity of the proposed env-$\ell_0$ regularization model which is by nature a non-convex minimization problem. We propose a fixed-point iterative algorithm for solving the resulting  non-convex minimization problem in Section \ref{sec:MAM_alro}, and establish its convergence theorem in Section \ref{sec:MAM_conv}. Section \ref{sec:MAM4} considers applications of the proposed inversion method of incomplete Fourier transforms in seismic wavefield modeling in the frequency domain. Numerical examples are presented in Section \ref{sec:MAM5} to validate the effectiveness, robustness and efficiency of the proposed methods. Finally, Section \ref{sec:Conclusion} concludes this paper.

\section{A Sparse Regularization Model}\label{sec:MAM2}\setcounter{equation}{0}

In this section, we propose a sparse regularization model using the envelope of the $\ell_0$ norm for inverting incomplete Fourier transforms. By employing properties of the envelope of the $\ell_0$ norm, we derive an equivalent model for the purpose of algorithmic development.



We first describe incomplete Fourier transforms under consideration. By $\mathbf{F}$ we denote an $M \times M$ discrete Fourier transform (DFT) matrix with the $(m,n)$-th entry given by
$$
\mathbf{F}_{mn}:=\frac{1}{\sqrt{M}}\exp\left(-i\frac{2\pi
(m-1)(n-1)}{M}\right).
$$
Suppose that $\mathbf{r}\in\mathbb{R}^d$ is a given vector in the Fourier frequency domain, where $d$ is a positive integer such that $d<M$. As a convention through out the paper, we assume that all vectors are column vectors unless stated otherwise.  Let $\mathbf{R}$ denote  a $d \times M$ ``row selector'' matrix, a row submatrix of the identity matrix $\mathbf{I}$ by selecting certain rows of $\mathbf{I}$. In fact, for a positive integer $m$ with $2\leq m\leq M$,  if the $m$-th row of $\mathbf{I}$ is included in $\mathbf{R}$, then  $\mathbf{R}$ also includes the $\left(M-m+2\right)$-th row of $\mathbf{I}$. This is a reasonable choice, as   $\mathbf{F}_{m,j}$ and $\mathbf{F}_{M-m+2,j}$ are mutually conjugate for $2\leq m\leq M$ and $1\leq j\leq M$. For a row selector $\mathbf{R}$, $\mathbf{RF}$ is an incomplete Fourier transform. Inverting the incomplete Fourier transform is to find a vector $\mathbf{v}\in\mathbb{R}^M$ such that
\begin{equation}
\label{seismogram-model-origin-app}
 \mathbf{RF}\mathbf{v}=
\mathbf{r}.
\end{equation}

It is known that a solution $\mathbf{v}\in \mathbb{R}^M$ of equation \eqref{seismogram-model-origin-app} may be sparse under a transform \cite{Lebed_Herrmann}.  We wish to reconstruct a solution $\mathbf{v}\in \mathbb{R}^M$ of equation \eqref{seismogram-model-origin-app} which is sparse under a tight framelet transform.
%
To this end, for a proper tight framelet matrix $\mathbf{W}$ of size $N \times M$, we define
\begin{eqnarray}
\mathbf{y}\ := \mathbf{W}\mathbf{v} \quad \mbox{and} \quad  \mathbf{K}\ :=\mathbf{RFW}^{*},
\label{def:y_K}
\end{eqnarray}
where $\mathbf{A}^{*}$ denotes the conjugate transpose of a matrix $\mathbf{A}$. Then model~\eqref{seismogram-model-origin-app} becomes
\begin{equation}\label{Inverse_fourier_problem-new}
\mathbf{K} \mathbf{y} = \mathbf{r}.
\end{equation}
Note that the vector $\mathbf{y}$ is the transform of $\mathbf{v}$ under the framelet matrix $ \mathbf{W}$. Inverting equation \eqref{Inverse_fourier_problem-new} is an ill-posed problem, which requires proper regularization.

Our next task is to describe the sparse regularization model for ``inverting" equation \eqref{Inverse_fourier_problem-new} to obtain a sparse vector $\mathbf{y}$. The sparsity of a vector is naturally measured by the $\ell_0$ ``norm'' which counts the number of nonzero components of the vector. Specifically, for an $a\in\mathbb{R}$, we let $|a|_0:=1$ if $a\neq 0$, and  $|a|_0:=0$ if $a=0$. The $\ell_0$ norm of $\mathbf{x}\in\mathbb{R}^N$ is defined by
$$
\|\mathbf{x}\|_0~:=\sum_{i=1}^N|x_i|_0.
$$
Even though $\|\cdot\|_0$ is not a norm, traditionally it is called the $\ell_0$ norm in the signal processing community. We will follow the tradition to call it the $\ell_0$ norm through out this paper.
The $\ell_0$ norm is non-convex and discontinuous at the origin, which causes computational difficulties. To overcome the difficulties, we adopt a continuous approximation of the  $\ell_0$ norm by its Moreau envelope. According to  \cite{moreau:RASPS:62,SXZ}, for a positive number $\beta$, the Moreau envelope of $\|\cdot\|_0$ with index $\beta$ at $\mathbf{x}\in \mathbb{R}^{N}$ is defined by
\begin{equation}
\label{Eq_envelope}
\mathrm{env}_{\beta\|\cdot\|_0}(\mathbf{x})~:=\min\left\{\frac{1}{2\beta}\|\mathbf{x}-\mathbf{z}\|_2^2+\|\mathbf{z}\|_0: \mathbf{z}\in\mathbb{R}^N\right\}.
\end{equation}
A direct computation leads to
$$
 \mathrm{env}_{\beta\|\cdot\|_0}(\mathbf{x})=\sum_{i=1}^{N}\varphi(x_i),
$$
where
$$
\varphi(x_i)~:= \left\{
\begin{array}{ll}
     1, & \hbox{$|x_i|\geq \sqrt{2\beta}$;} \\
   \frac{1}{2\beta}x_i^2, & \hbox{otherwise.}
\end{array}
\right.
$$
Clearly, $\mathrm{env}_{\beta\|\cdot\|_0}$ is continuous and locally convex near the origin. Moreover, as $\beta\rightarrow 0^{+}$, $\mathrm{env}_{\beta\|\cdot\|_0}\rightarrow\|\cdot\|_0$. Therefore, when $\beta$ is small enough,  $\mathrm{env}_{\beta\|\cdot\|_0}$ is a good approximation of $\|\cdot\|_0$. With an appropriate choice of the parameter $\beta$, $\mathrm{env}_{\beta\|\cdot\|_0}$ can be used as a measure of sparsity and at the same time we avoid drawbacks of $\|\cdot\|_0$.
 For $(\mathbf{r},\mathbf{y})\in\mathbb{R}^{d}\times\mathbb{R}^{N}$, we let
\begin{eqnarray}\label{Def-Q}
Q(\mathbf{r},\mathbf{y})=\frac{1}{2}\|\mathbf{K}\mathbf{y}-\mathbf{r}\|_2^2
+\gamma {\rm env}_{\beta\|\cdot\|_0}\left(\mathbf{y}\right),
\end{eqnarray}
where $\gamma$ is a positive parameter.
As $\mathbf{r}\in\mathbb{R}^{d}$ in inverting incomplete Fourier transform  \eqref{seismogram-model-origin-app} is fixed,  we write $Q(\mathbf{r},\mathbf{y})$ as $Q(\mathbf{y})$ for convenient presentation.
We now propose the sparse regularization model using the Moreau envelope  of the $\ell_0$ norm to recover a sparse vector $\mathbf{y}$ from (\ref{Inverse_fourier_problem-new})
\begin{equation}\label{model:general}
\mathbf{y}^\star ={\rm argmin}\left\{Q\left(\mathbf{y}\right): \mathbf{y}\in \mathbb{R}^{N}\right\}.
\end{equation}
Since $\mathrm{env}_{\beta\|\cdot\|_0}$ is an approximation of $\|\cdot\|_0$, we expect that the proposed model enjoys most advantages of the $\ell_0$ norm-based model, while it may be solved by efficient algorithms.


Model \eqref{model:general} may be reinterpreted from a {\it Bayesian} viewpoint \cite{Stuart2010}. In Bayesian statistics, a maximum a posteriori probability (MAP) estimate is an estimate of an unknown quantity, which equals to the mode of the posterior distribution.
To derive model \eqref{model:general} from the Bayesian approach, we treat variables $\mathbf{y}$ and $\mathbf{r}$ in equation \eqref{Inverse_fourier_problem-new} as random variables. Let $\mathbf{1}$ denote the vector of the same size as $\mathbf{r}$, with its all components being 1. We assume that the known data $\mathbf{r}$ related to the unknown distribution  $\mathbf{y}$ can be approximated by the following model
\begin{eqnarray}
\label{Bay_normal}
\mathbf{r}
={\rm Normal}\left(\mathbf{K} \mathbf{y},\sigma^2\mathbf{1}\right),
\end{eqnarray}
where ${\rm Normal}\left(\mathbf{K} \mathbf{y},\sigma^2\mathbf{1}\right)$ denotes a Normal distributed random  vector with mean $\mathbf{K} \mathbf{y}$ and variation $\sigma^2\mathbf{1}$. In model \eqref{Bay_normal} we use the Normal distribution since the Gaussian noise is the most common noise that we encounter in applications.
The MAP estimate $\mathbf{y}^{\star}$ is obtained by maximizing the conditional a
  $posteriori$ probability $p(\mathbf{y}|\mathbf{r})$, the probability that $\mathbf{y}$ occurs when $\mathbf{r}$ is observed. This probability
may be computed using the Bayes law:
\begin{eqnarray}
\label{Bay_formula}
p(\mathbf{y}|\mathbf{r})\propto
p(\mathbf{r}|\mathbf{y})p(\mathbf{y}),
\end{eqnarray}
where the notation ``${x}\propto{z}$'' means that the scalar ${x}$ is proportional to the scalar ${z}$,
and $p(\mathbf{r}|\mathbf{y})$ denotes the conditional probability that $\mathbf{r}$ occurs when $\mathbf{y}$ is known. By taking the logarithm
of both sides of equation \eqref{Bay_formula}, the MAP estimate can then be calculated using the formula
\begin{eqnarray}
\label{Bay_model}
\mathbf{y}^\star ={\rm argmax}\left\{\ln p(\mathbf{r}|\mathbf{y})+\ln p(\mathbf{y}): \mathbf{y}\in \mathbb{R}^{N}\right\}.
\end{eqnarray}
The first term can
be considered as a fidelity term, a measure of the discrepancy between the estimated and the
observed data. The second term is a regularization function, which penalizes solutions that
have low probability. We then compute the two terms in model \eqref{Bay_model}. According
to equation \eqref{Bay_normal}, $\mathbf{r}$ follows the Normal distribution with $\mathbf{K} \mathbf{y}$ as its mean and $\sigma^2\mathbf{1}$ as its variation. As a result,
the probability density function $p(\mathbf{r}|\mathbf{y})$ of $\mathbf{r}$ conditioned on $\mathbf{y}$ can be computed by using the
formula
\begin{eqnarray}
\label{P_r_y}
p(\mathbf{r}|\mathbf{y})=\prod_{i=1}^{N}
\frac{1}{\sqrt{2\pi}\sigma}
\exp\left(-\frac{\left(\left(\mathbf{K} \mathbf{y}\right)_i-r_i\right)^2}{2\sigma^2}\right).
\end{eqnarray}
Taking the logarithm of both sides of equation \eqref{P_r_y} yields
\begin{eqnarray}
\label{ln_P_r_y}
\ln p(\mathbf{r}|\mathbf{y})={\rm const}_1-\frac{1}{2\sigma^2}
\|\mathbf{K}\mathbf{y}-\mathbf{r}\|_2^2,
\end{eqnarray}
where ${\rm const}_1$ is a constant independent of $\mathbf{y}$.
To compute the second term in model  \eqref{Bay_model}, the Gibbs prior \cite{Lalush_Tsui} is used
\begin{eqnarray}
\label{p_y_formula}
p(\mathbf{y})\propto
\exp\left(-\widetilde{\gamma}\vartheta(\mathbf{y})\right),
\end{eqnarray}
with the Gibbs real-valued energy function $\vartheta(\mathbf{y})$ defined on $\mathbb{R}^{N}$ and a positive regularization
parameter $\widetilde{\gamma}$ called hyperparameter. For the purpose of promoting sparsity of the estimated solution, we may choose the energy function $\vartheta(\mathbf{y})$ in \eqref{p_y_formula} as a sparsity promoting norm of $\mathbf{y}$, such as $\|\mathbf{y}\|_1$ \cite{Donoho,WSX}. As we have discussed earlier,  $\mathrm{env}_{\beta\|\cdot\|_0}$ is an excellent sparsity promoting function. Hence, in this paper we adopt
 \begin{eqnarray}
 \label{vartheta}
 \vartheta(\mathbf{y})=\mathrm{env}_{\beta\|\cdot\|_0}(\mathbf{y}).
 \end{eqnarray}
From \eqref{p_y_formula} and \eqref{vartheta}, we have that
\begin{eqnarray}
\label{ln_p_y}
\ln p(\mathbf{y})=-\widetilde{\gamma}
\mathrm{env}_{\beta\|\cdot\|_0}(\mathbf{y})+{\rm const}_2,
\end{eqnarray}
where ${\rm const}_2$ is again a constant independent of $\mathbf{y}$.
Letting $\gamma:~=\sigma^2\widetilde{\gamma}$ and substituting  equations \eqref{ln_P_r_y}, \eqref{ln_p_y} into \eqref{Bay_model} lead to model \eqref{model:general}.


We reformulate the proposed model \eqref{model:general} so that the resulting model is convenient for computation. Motivated by the definition \eqref{Eq_envelope} of $\mathrm{env}_{\beta\|\cdot\|_0}$ in the non-convex model \eqref{model:general}, we introduce the following function
\begin{eqnarray}
F(\mathbf{r},\mathbf{x},\mathbf{y})~:=
\frac{1}{2}\|\mathbf{K}\mathbf{y}-\mathbf{r}\|_2^2
+\frac{\gamma}{2\beta}\|\mathbf{x}-\mathbf{y}\|_2^2
+\gamma\|\mathbf{x}\|_0,
\qquad (\mathbf{r},\mathbf{x},\mathbf{y})\in \mathbb{R}^{d}\times\mathbb{R}^{N}\times \mathbb{R}^{N}.
\label{def:F}
\end{eqnarray}
Again since $\mathbf{r}$ is fixed,  we write $F(\mathbf{r},\mathbf{x},\mathbf{y})$ as $F(\mathbf{x},\mathbf{y})$. The non-convex function $F(\mathbf{x},\mathbf{y})$ is a special case of those considered in \cite{Xu}.
We then consider the model
\begin{equation}\label{model:two_variable}
(\mathbf{x}^{\star},\mathbf{y}^{\star})={\rm argmin}\left\{
F(\mathbf{x},\mathbf{y}),\quad (\mathbf{x},\mathbf{y})\in \mathbb{R}^{N}\times \mathbb{R}^{N}\right\}.
\end{equation}

We next show that models \eqref{model:general} and \eqref{model:two_variable} are essentially equivalent. A global minimizer of any of these models will also be called a solution of the model. We first present a relation between $Q(\mathbf{y})$ and $F(\mathbf{x}, \mathbf{y})$. We recall the proximity operator of the $\ell_0$ norm. For $\beta>0$, the proximity operator of  $\|\cdot\|_0$ at $\mathbf{z}\in \mathbb{R}^{N}$ is defined by \begin{eqnarray}
\mathrm{prox}_{\beta\|\cdot\|_0} \left(\mathbf{z}\right)
~:={\rm argmin}\left\{\frac{1}{2\beta}\|\mathbf{x}-\mathbf{z}\|_2^2+\|\mathbf{x}\|_0: \mathbf{x}\in\mathbb{R}^N\right\},
\label{def:prox_beta_ell0}
\end{eqnarray}
see, for example, \cite{MSX}. Clearly, if $\mathbf{x}\in \mathrm{prox}_{\beta\|\cdot\|_0} \left(\mathbf{z}\right)$, then we have that
\begin{equation}\label{env-prox}
    \mathrm{env}_{\beta\|\cdot\|_0} \left(\mathbf{z}\right)=\frac{1}{2\beta}\|\mathbf{x}-\mathbf{z}\|^2_2+\|\mathbf{x}\|_0.
\end{equation}
By equation \eqref{env-prox}, we observe that
\begin{eqnarray}
Q(\mathbf{y})=F(\mathbf{x},\mathbf{y}),\ \ \ \ \ {\rm for\ all}
\ \mathbf{x}\in\mathrm{prox}_{\beta\|\cdot\|_0} \left(\mathbf{y}\right) \ {\rm and\ for\ all}\ \mathbf{y}\in\mathbb{R}^{N}.
\label{Q_F_relation}
\end{eqnarray}
The next proposition modifies Proposition 1 of \cite{SXZ} for the current setting.

\begin{proposition}\label{pro:model_relation}
Let  $\beta>0$ and $\gamma>0$. A pair $(\mathbf{x}^{\star},\mathbf{y}^{\star})$ is a
solution of model~\eqref{model:two_variable} if and only if $\mathbf{y}^{\star}$
is a solution of model \eqref{model:general} with $\mathbf{x}^{\star}$ satisfying the inclusion relation
\begin{eqnarray}
\mathbf{x}^{\star}\in\mathrm{prox}_{\beta\|\cdot\|_0} \left(\mathbf{y}^{\star}\right).
\label{xstar_ystar_re1}
\end{eqnarray}
\end{proposition}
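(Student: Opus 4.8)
The plan is to reduce the whole statement to a single partial-minimization identity, namely that $Q$ is obtained from $F$ by minimizing out the auxiliary variable $\mathbf{x}$. First I would record the elementary observation that, for each fixed $\mathbf{y}\in\mathbb{R}^N$, the fidelity term $\frac{1}{2}\|\mathbf{K}\mathbf{y}-\mathbf{r}\|_2^2$ in $F(\mathbf{x},\mathbf{y})$ does not depend on $\mathbf{x}$, so minimizing $F(\cdot,\mathbf{y})$ is the same as minimizing $\frac{\gamma}{2\beta}\|\mathbf{x}-\mathbf{y}\|_2^2+\gamma\|\mathbf{x}\|_0$. By the definitions \eqref{Eq_envelope} and \eqref{def:prox_beta_ell0} of the Moreau envelope and the proximity operator, this minimum equals $\gamma\,\mathrm{env}_{\beta\|\cdot\|_0}(\mathbf{y})$ and is attained exactly at the points $\mathbf{x}\in\mathrm{prox}_{\beta\|\cdot\|_0}(\mathbf{y})$. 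Combining this with \eqref{Q_F_relation} yields the key identity
\[
Q(\mathbf{y})=\min_{\mathbf{x}\in\mathbb{R}^N}F(\mathbf{x},\mathbf{y}),\qquad \mathbf{y}\in\mathbb{R}^N,
\]
with the inner minimum realized precisely on $\mathrm{prox}_{\beta\|\cdot\|_0}(\mathbf{y})$.

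Granting this identity, the two implications follow by elementary comparisons. For the forward direction, I would suppose $(\mathbf{x}^\star,\mathbf{y}^\star)$ minimizes $F$; holding $\mathbf{y}^\star$ fixed shows $\mathbf{x}^\star$ minimizes $F(\cdot,\mathbf{y}^\star)$, hence $\mathbf{x}^\star\in\mathrm{prox}_{\beta\|\cdot\|_0}(\mathbf{y}^\star)$, which is \eqref{xstar_ystar_re1}, and then \eqref{Q_F_relation} gives $F(\mathbf{x}^\star,\mathbf{y}^\star)=Q(\mathbf{y}^\star)$. For an arbitrary $\mathbf{y}$ I would pick any $\mathbf{x}\in\mathrm{prox}_{\beta\|\cdot\|_0}(\mathbf{y})$, so that $Q(\mathbf{y})=F(\mathbf{x},\mathbf{y})\ge F(\mathbf{x}^\star,\mathbf{y}^\star)=Q(\mathbf{y}^\star)$, proving $\mathbf{y}^\star$ solves \eqref{model:general}. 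For the converse, suppose $\mathbf{y}^\star$ solves \eqref{model:general} and $\mathbf{x}^\star\in\mathrm{prox}_{\beta\|\cdot\|_0}(\mathbf{y}^\star)$; then $F(\mathbf{x}^\star,\mathbf{y}^\star)=Q(\mathbf{y}^\star)$ by \eqref{Q_F_relation}, and for every $(\mathbf{x},\mathbf{y})$ the key identity delivers $F(\mathbf{x},\mathbf{y})\ge Q(\mathbf{y})\ge Q(\mathbf{y}^\star)=F(\mathbf{x}^\star,\mathbf{y}^\star)$, so the pair minimizes $F$.

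I expect no serious obstacle here, since the argument is structural rather than computational. The one point demanding care is the set-valued character of $\mathrm{prox}_{\beta\|\cdot\|_0}$ for the discontinuous $\ell_0$ norm: the inner minimizer of $F(\cdot,\mathbf{y})$ need not be unique, and this is exactly why \eqref{xstar_ystar_re1} must be phrased as an inclusion rather than an equality. Accordingly, I would invoke the existence of an inner minimizer only through the fact that the envelope in \eqref{Eq_envelope} is defined as an \emph{attained} minimum, which guarantees $\mathrm{prox}_{\beta\|\cdot\|_0}(\mathbf{y})$ is nonempty and makes the choice of a representative $\mathbf{x}\in\mathrm{prox}_{\beta\|\cdot\|_0}(\mathbf{y})$ always legitimate.
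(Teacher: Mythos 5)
Your proof is correct and follows essentially the same route as the paper: both rest on the observation that $F(\mathbf{x},\mathbf{y})\ge Q(\mathbf{y})$ for all $\mathbf{x}$, with equality precisely when $\mathbf{x}\in\mathrm{prox}_{\beta\|\cdot\|_0}(\mathbf{y})$, and both obtain the inclusion \eqref{xstar_ystar_re1} by fixing $\mathbf{y}=\mathbf{y}^{\star}$ and minimizing over $\mathbf{x}$. The only difference is presentational: you phrase the comparison directly via the partial-minimization identity $Q(\mathbf{y})=\min_{\mathbf{x}}F(\mathbf{x},\mathbf{y})$, whereas the paper runs the same inequalities as two arguments by contradiction.
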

\begin{proof}
Suppose that a pair $(\mathbf{x}^{\star},\mathbf{y}^{\star})$ is a solution of model \eqref{model:two_variable}.
We first establish the inclusion relation
\eqref{xstar_ystar_re1}.
Since $(\mathbf{x}^{\star},\mathbf{y}^{\star})$ is a solution of the minimization problem  \eqref{model:two_variable}, we have for all  $(\mathbf{x},\mathbf{y})\in \mathbb{R}^{N}\times \mathbb{R}^{N}$ that
\begin{eqnarray}
\frac{1}{2}\|\mathbf{K}\mathbf{y}^{\star}-\mathbf{r}\|_2^2
+\frac{\gamma}{2\beta}\|\mathbf{x}^{\star}-\mathbf{y}^{\star}\|_2^2
+\gamma\|\mathbf{x}^{\star}\|_0
\leq
\frac{1}{2}\|\mathbf{K}\mathbf{y}-\mathbf{r}\|_2^2
+\frac{\gamma}{2\beta}\|\mathbf{x}-\mathbf{y}\|_2^2
+\gamma\|\mathbf{x}\|_0.
\label{equ:F_global_solution_nec}
\end{eqnarray}
In particular, inequality \eqref{equ:F_global_solution_nec} holds for all $\mathbf{x}\in \mathbb{R}^{N}$ with $\mathbf{y}:=\mathbf{y}^{\star}$.
The resulting inequality together with $\gamma>0$ yields
\begin{eqnarray*}
\frac{1}{2\beta}\|\mathbf{x}^{\star}-\mathbf{y}^{\star}\|_2^2
+\|\mathbf{x}^{\star}\|_0
\leq
\frac{1}{2\beta}\|\mathbf{x}-\mathbf{y}^{\star}\|_2^2
+\|\mathbf{x}\|_0,
\ \ {\rm for \ all}\  \mathbf{x} \in \mathbb{R}^{N}.
\end{eqnarray*}
By the definition \eqref{def:prox_beta_ell0} of $\mathrm{prox}_{\beta\|\cdot\|_0} \left(\mathbf{y}^{\star}\right)$, we obtain the desired inclusion relation \eqref{xstar_ystar_re1}.
We next show by contradiction that $\mathbf{y}^{\star}$ is a solution of model \eqref{model:general}.
Assume to the contrary that there exists a vector $\widetilde{\mathbf{y}}\in\mathbb{R}^{N}$ such that
$Q(\widetilde{\mathbf{y}})<Q(\mathbf{y}^{\star})$.
This inequality together with \eqref{Q_F_relation} leads to
\begin{eqnarray}
F(\widetilde{\mathbf{x}},\widetilde{\mathbf{y}})=Q(\widetilde{\mathbf{y}})
<Q(\mathbf{y}^{\star}),
\ \ {\rm for\ all}\  \widetilde{\mathbf{x}}\in\mathrm{prox}_{\beta\|\cdot\|_0} \left(\widetilde{\mathbf{y}}\right).\label{Q_F_re2}
\end{eqnarray}
By inclusion \eqref{xstar_ystar_re1} and equation \eqref{Q_F_relation}, we obtain that $Q(\mathbf{y}^{\star})=F(\mathbf{x}^{\star},\mathbf{y}^{\star})$.
Thus, \eqref{Q_F_re2} yields that $F(\widetilde{\mathbf{x}},\widetilde{\mathbf{y}})<F(\mathbf{x}^{\star},\mathbf{y}^{\star})$,
which contradicts the assumption and proves that $\mathbf{y}^{\star}$ is a solution of model \eqref{model:general}.

Now, we suppose that $\mathbf{y}^{\star}$ is a solution of model
\eqref{model:general} with $\mathbf{x}^{\star}$ satisfying inclusion  \eqref{xstar_ystar_re1} and show that the pair $(\mathbf{x}^{\star},\mathbf{y}^{\star})$ is a solution of model \eqref{model:two_variable}. Assume to the contrary that $(\mathbf{x}^{\star},\mathbf{y}^{\star})$ is not a solution of model \eqref{model:two_variable}. Then, there exists a pair $\left(\widetilde{\mathbf{x}},\widetilde{\mathbf{y}}\right)\in\mathbb{R}^{N}\times\mathbb{R}^{N}$ satisfying
\begin{eqnarray*}
F(\widetilde{\mathbf{x}},\widetilde{\mathbf{y}})
<F(\mathbf{x}^{\star},\mathbf{y}^{\star})=Q(\mathbf{y}^{\star}).
\end{eqnarray*}
If $\widetilde{\mathbf{x}}\in\mathrm{prox}_{\beta\|\cdot\|_0} \left(\widetilde{\mathbf{y}}\right)$, we have that
$
Q\left(\widetilde{\mathbf{y}}\right)=F(\widetilde{\mathbf{x}},\widetilde{\mathbf{y}}).
$
If $\widetilde{\mathbf{x}}\notin\mathrm{prox}_{\beta\|\cdot\|_0} \left(\widetilde{\mathbf{y}}\right)$, by the definition of $\mathrm{prox}_{\beta\|\cdot\|_0}$ and $\mathrm{env}_{\beta\|\cdot\|_0}$, we have that
$$
\mathrm{env}_{\beta\|\cdot\|_0}\left(\widetilde{\mathbf{y}}\right)<\frac{1}{2\beta}\|\widetilde{\mathbf{x}}-\widetilde{\mathbf{y}}\|_2^2
+\|\widetilde{\mathbf{x}}\|_0.
$$
Combining this with the definition of $Q$ in \eqref{Def-Q} and $F$ in \eqref{def:F}   yields
$
Q(\widetilde{\mathbf{y}})<F\left(\widetilde{\mathbf{x}},\widetilde{\mathbf{y}}\right).
$
Therefore, in either case, we have that
$
Q(\widetilde{\mathbf{y}})<Q(\mathbf{y}^{\star}),
$
which contradicts the assumption of $\mathbf{y}^{\star}$ being a solution of model \eqref{model:general}. Thus, $(\mathbf{x}^{\star},\mathbf{y}^{\star})$ must be a solution of model \eqref{model:two_variable}.
\end{proof}

Proposition \ref{pro:model_relation} leads us to solving minimization problem \eqref{model:two_variable}.

\section{Local Convexity of the env-$\ell_0$ Regularization Model}\label{sec:LConvexity}\setcounter{equation}{0}

In this section, we show that the proposed env-$\ell_0$ regularization model \eqref{model:general}, a non-convex minimization problem, is reduced to a convex minimization problem on a subdomain. For this purpose, as Proposition \ref{pro:model_relation} ensures that  the proposed model \eqref{model:general} and model \eqref{model:two_variable} are essentially equivalent, it suffices to prove that a local minimizer of the non-convex model \eqref{model:two_variable} is  a minimizer of a convex problem on a subdomain. To this end, we first construct a convex optimization problem on a proper subdomain, and discuss the relation between a minimizer of the convex optimization problem on a proper subdomain and that of model \eqref{model:two_variable}.   Then, we establish the relation between a local minimizer of model \eqref{model:general} and that of model  \eqref{model:two_variable}.

We first present a convex model on a subdomain of $\mathbb{R}^{N}\times \mathbb{R}^{N}$ related to model \eqref{model:two_variable}. We need the notion of the support of a vector. By $N(\mathbf{x})$ we denote the support of $\mathbf{x}\in\mathbb{R}^{N}$, the index set on which the components of $\mathbf{x}$ is nonzero, that is $N(\mathbf{x})~:=\{i~:x_i\neq 0\}$. Note that when the support of $\mathbf{x}$ in model \eqref{model:two_variable} is specified, the non-convex model \eqref{model:two_variable} reduces to a convex one. Based on this observation, we introduce a convex function by
\begin{eqnarray}
G(\mathbf{r},\mathbf{x},\mathbf{y})~:=\frac{1}{2}\|\mathbf{K}\mathbf{y}-\mathbf{r}\|_2^2
+\frac{\gamma}{2\beta}\|\mathbf{x}-\mathbf{y}\|_2^2,
\qquad\quad (\mathbf{r},\mathbf{x},\mathbf{y})\in \mathbb{R}^{d}\times\mathbb{R}^{N}\times \mathbb{R}^{N}.
\label{def:G}
\end{eqnarray}
When there is no ambiguity,  we shall write $G(\mathbf{x},\mathbf{y})=G(\mathbf{r},\mathbf{x},\mathbf{y})$ for simplicity since $\mathbf{r}$ is fixed.
Clearly, $F(\mathbf{x},\mathbf{y})= G(\mathbf{x},\mathbf{y})+\gamma \|\mathbf{x}\|_0$ and $G(\mathbf{x},\mathbf{y})$ is a differentiable and convex component of $F(\mathbf{x},\mathbf{y})$ on $\mathbb{R}^{N}\times \mathbb{R}^{N}$.  For a given index set $\mathcal{N}$,
we define a subspace of $\mathbb{R}^{N}$ by letting
\begin{eqnarray}
\mathcal{B}_{\mathcal{N}}~:=\left\{\mathbf{x}\in\mathbb{R}^{N}~:
N(\mathbf{x})\subseteq\mathcal{N}\right\}.
\label{def:B_N}
\end{eqnarray}
Clearly, for $\mathbf{x}\in\mathcal{B}_{\mathcal{N}}$, $x_j=0$ for all $j\notin \mathcal{N}$, and $\mathcal{B}_{\mathcal{N}}$ is convex.
Moreover,  $\mathcal{B}_{\mathcal{N}}$ is a closed set in $\mathbb{R}^{N}$.
To show this, we state  Item (i) of Lemma 4.7 in \cite{Xu} as the next lemma.

\begin{lemma}\label{corollary:support_xk_any}
If the sequence $\{\mathbf{x}^{k}\}\subset \mathbb{R}^{N}$ converges to $\mathbf{x}^{\star}$, then there exists an integer $V>0$ such that $N(\mathbf{x}^{\star})\subseteq N(\mathbf{x}^{k})$ for all $k\geq V$.
\end{lemma}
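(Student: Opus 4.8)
The plan is to argue index by index, exploiting the fact that the support $N(\mathbf{x}^{\star})$ is a finite subset of $\{1,2,\dots,N\}$. The underlying principle is elementary: a nonzero scalar cannot be the limit of a sequence whose terms eventually lie in a small enough neighborhood of zero. Once this is made precise at each coordinate, the finiteness of the support allows us to pass from coordinatewise thresholds to a single uniform one.

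First I would fix an arbitrary index $i\in N(\mathbf{x}^{\star})$, so that $x^{\star}_i\neq 0$ and hence $|x^{\star}_i|>0$. Since convergence in $\mathbb{R}^{N}$ implies convergence of each coordinate, the scalar sequence $\{x^{k}_i\}$ converges to $x^{\star}_i$. Applying the definition of convergence with $\varepsilon:=|x^{\star}_i|>0$, there exists an integer $V_i$ such that $|x^{k}_i-x^{\star}_i|<|x^{\star}_i|$ for all $k\geq V_i$. The reverse triangle inequality then yields $|x^{k}_i|\geq |x^{\star}_i|-|x^{k}_i-x^{\star}_i|>0$, so $x^{k}_i\neq 0$; that is, $i\in N(\mathbf{x}^{k})$ for every $k\geq V_i$.

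Next I would remove the dependence on $i$. If $N(\mathbf{x}^{\star})=\varnothing$, the inclusion $N(\mathbf{x}^{\star})\subseteq N(\mathbf{x}^{k})$ is trivially true for any $V\geq 1$. Otherwise, because $N(\mathbf{x}^{\star})\subseteq\{1,2,\dots,N\}$ is a finite set, I would set $V:=\max\{V_i: i\in N(\mathbf{x}^{\star})\}$, a well-defined positive integer. Then for every $k\geq V$ and every $i\in N(\mathbf{x}^{\star})$ we have $k\geq V_i$, and therefore $i\in N(\mathbf{x}^{k})$ by the previous step. Since $i$ was arbitrary in $N(\mathbf{x}^{\star})$, this establishes $N(\mathbf{x}^{\star})\subseteq N(\mathbf{x}^{k})$ for all $k\geq V$, as claimed.

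As for the main obstacle: there is essentially no analytic difficulty, and the only point requiring care is the passage from the coordinatewise thresholds $V_i$ to a single $V$. This step is legitimate precisely because the support is \emph{finite}, so that the maximum of the relevant $V_i$ exists; the same argument would break down in an infinite-dimensional setting where $N(\mathbf{x}^{\star})$ could be infinite and the supremum of the thresholds need not be attained. I would therefore make the finiteness of $N(\mathbf{x}^{\star})$ explicit in the write-up to justify taking the maximum.
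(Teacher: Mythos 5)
Your proof is correct. Note that the paper does not actually prove this lemma: it simply quotes it as Item (i) of Lemma 4.7 of the cited reference \cite{Xu}, so there is no in-paper argument to compare against. Your coordinatewise argument --- eventual nonvanishing of each coordinate $x_i^{k}$ with $x_i^{\star}\neq 0$ via the reverse triangle inequality, followed by taking the maximum of the finitely many thresholds $V_i$ (with the empty-support case handled trivially) --- is the standard elementary proof one would expect, and your remark that finiteness of $N(\mathbf{x}^{\star})$ is what legitimizes taking the maximum is exactly the right point to flag.
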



We remark that the reverse inclusion relation of that in Lemma \ref{corollary:support_xk_any} does not hold in general, see \cite{Xu}. We now return to set  $\mathcal{B}_{\mathcal{N}}$.

\begin{lemma}\label{lem:BN_closed}
The set $\mathcal{B}_{\mathcal{N}}$ defined by \eqref{def:B_N}
is closed in the  norm $\|\cdot\|_2$.
\end{lemma}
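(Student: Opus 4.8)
The plan is to establish closedness directly from the sequential characterization: I would show that $\mathcal{B}_{\mathcal{N}}$ contains every one of its limit points in the $\|\cdot\|_2$ topology. To that end, I would start with an arbitrary sequence $\{\mathbf{x}^{k}\}\subset\mathcal{B}_{\mathcal{N}}$ that converges to some $\mathbf{x}^{\star}\in\mathbb{R}^{N}$, and the goal becomes verifying the membership $\mathbf{x}^{\star}\in\mathcal{B}_{\mathcal{N}}$, which by the definition \eqref{def:B_N} amounts to confirming the single inclusion $N(\mathbf{x}^{\star})\subseteq\mathcal{N}$.

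The main step is to feed this convergent sequence into Lemma \ref{corollary:support_xk_any}. Because $\mathbf{x}^{k}\to\mathbf{x}^{\star}$, the lemma supplies an integer $V>0$ such that $N(\mathbf{x}^{\star})\subseteq N(\mathbf{x}^{k})$ for every $k\geq V$. Since each $\mathbf{x}^{k}$ lies in $\mathcal{B}_{\mathcal{N}}$, the definition \eqref{def:B_N} also gives $N(\mathbf{x}^{k})\subseteq\mathcal{N}$. Fixing any single index $k\geq V$ and chaining the two inclusions then yields $N(\mathbf{x}^{\star})\subseteq N(\mathbf{x}^{k})\subseteq\mathcal{N}$, so $\mathbf{x}^{\star}\in\mathcal{B}_{\mathcal{N}}$. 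As $\{\mathbf{x}^{k}\}$ and its limit were arbitrary, $\mathcal{B}_{\mathcal{N}}$ is closed.

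I regard the only delicate point — the crux of the argument — to be using the support inclusion in the \emph{correct} direction. As the remark following Lemma \ref{corollary:support_xk_any} stresses, the reverse inclusion $N(\mathbf{x}^{k})\subseteq N(\mathbf{x}^{\star})$ can fail, since a component may vanish only in the limit (for instance a sequence with a fixed nonzero coordinate tending to $0$); an argument relying on that direction would be invalid. What saves the proof is that closedness requires exactly the inclusion the lemma delivers: under a limit the support can shrink but never enlarge, so the constraint $N(\cdot)\subseteq\mathcal{N}$ is automatically inherited by the limit. I would note in passing that one could alternatively observe that $\mathcal{B}_{\mathcal{N}}$ is a coordinate subspace of the finite-dimensional space $\mathbb{R}^{N}$ and hence closed, but the route through Lemma \ref{corollary:support_xk_any} is the natural one given the tools already assembled, and it reuses machinery needed elsewhere in the paper.
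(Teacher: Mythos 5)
Your argument is correct and follows essentially the same route as the paper: take a convergent sequence in $\mathcal{B}_{\mathcal{N}}$, apply Lemma \ref{corollary:support_xk_any} to obtain $N(\mathbf{x}^{\star})\subseteq N(\mathbf{x}^{k})$ for large $k$, and chain this with $N(\mathbf{x}^{k})\subseteq\mathcal{N}$ to conclude $\mathbf{x}^{\star}\in\mathcal{B}_{\mathcal{N}}$. Your added commentary on why the direction of the support inclusion matters is accurate but not needed for the proof itself.
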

\begin{proof}
 Assume that $\left\{\mathbf{x}^k\right\}$
is a convergent sequence in $\mathcal{B}_{\mathcal{N}}$ in the norm  $\|\cdot\|_2$. As $\mathcal{B}_{\mathcal{N}}\subseteq\mathbb{R}^{N}$, $\left\{\mathbf{x}^k\right\}$ converges to some $\mathbf{x}^{\star}\in\mathbb{R}^{N}$.
By Lemma \ref{corollary:support_xk_any}, there  exists $V>0$ such that $N(\mathbf{x}^{\star})\subseteq N(\mathbf{x}^{k})$ for $k\geq V$. In addition, from the  definition of $\mathcal{B}_{\mathcal{N}}$, there holds $N(\mathbf{x}^{k})\subseteq \mathcal{N}$ for all $k\geq 1$. Thus, $N(\mathbf{x}^{\star})\subseteq \mathcal{N}$. Once again, by the definition \eqref{def:B_N} of $\mathcal{B}_{\mathcal{N}}$, we conclude that $\mathbf{x}^{\star}\in\mathcal{B}_{\mathcal{N}}$. Thus, $\mathcal{B}_{\mathcal{N}}$
is a closed set.
\end{proof}

For a given index set $\mathcal{N}$, we introduce the minimization problem on $\mathcal{B}_{\mathcal{N}}\times\mathbb{R}^{N}$ by
\begin{equation}\label{model:convex_model}
{\rm argmin}\left\{
G(\mathbf{x},\mathbf{y}),\quad (\mathbf{x},\mathbf{y})\in \mathcal{B}_{\mathcal{N}}\times\mathbb{R}^{N}\right\}.
\end{equation}
Since function $G$ is convex and set
$\mathcal{B}_{\mathcal{N}}\times\mathbb{R}^{N}$
is convex, problem \eqref{model:convex_model} is convex.

We next show that the non-convex model \eqref{model:two_variable} is equivalent to the convex model~\eqref{model:convex_model} with a properly chosen index set $\mathcal{N}$. To this end, we explore properties of the support set of certain sequences in $\mathbb{R}^{N}$.
For a given index set $\mathcal{N}$, we define an operator ${\rm P}_{\mathcal{B}_{\mathcal{N}}}:~\mathbb{R}^{N}\rightarrow \mathcal{B}_{\mathcal{N}}$ as
\begin{eqnarray}
{\rm P}_{\mathcal{B}_{\mathcal{N}}}(\mathbf{y}) ~:= \left\{
                             \begin{array}{ll}
                             y_i,\ \ & \hbox{if $i\in \mathcal{N}$;} \\
                               0,\ \ & \hbox{otherwise.}
                             \end{array}
                         \right.\label{P_B}
\end{eqnarray}
The next lemma confirms that the operator defined by \eqref{P_B} is indeed the orthogonal projection from $\mathbb{R}^{N}$ onto $\mathcal{B}_{\mathcal{N}}$.

\begin{lemma}\label{P_BN_L2}
If $\mathcal{N}$ is a subset of $\{1,2,\ldots,N\}$, then ${\rm P}_{\mathcal{B}_{\mathcal{N}}}$ defined by equation \eqref{P_B} is the orthogonal projection from $\mathbb{R}^{N}$ onto the closed convex set $\mathcal{B}_{\mathcal{N}}$.
\end{lemma}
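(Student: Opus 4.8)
The plan is to verify directly that, for each $\mathbf{y}\in\mathbb{R}^{N}$, the vector ${\rm P}_{\mathcal{B}_{\mathcal{N}}}(\mathbf{y})$ defined by \eqref{P_B} is the unique minimizer of the distance $\|\mathbf{x}-\mathbf{y}\|_2$ over $\mathbf{x}\in\mathcal{B}_{\mathcal{N}}$, which is precisely the definition of the orthogonal projection onto the closed convex set $\mathcal{B}_{\mathcal{N}}$. Existence and uniqueness of such a projection are guaranteed because $\mathcal{B}_{\mathcal{N}}$ is nonempty, convex, and closed, the last property coming from Lemma \ref{lem:BN_closed}. Hence the only real task is to identify this minimizer with the coordinatewise formula \eqref{P_B}.

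First I would confirm membership: by \eqref{P_B}, the $i$-th component of ${\rm P}_{\mathcal{B}_{\mathcal{N}}}(\mathbf{y})$ vanishes whenever $i\notin\mathcal{N}$, so its support lies in $\mathcal{N}$ and thus ${\rm P}_{\mathcal{B}_{\mathcal{N}}}(\mathbf{y})\in\mathcal{B}_{\mathcal{N}}$ by \eqref{def:B_N}. Next I would split the squared distance along the coordinates inside and outside $\mathcal{N}$: for any $\mathbf{x}\in\mathcal{B}_{\mathcal{N}}$ we have $x_i=0$ for $i\notin\mathcal{N}$, whence
\[
\|\mathbf{x}-\mathbf{y}\|_2^2=\sum_{i\in\mathcal{N}}(x_i-y_i)^2+\sum_{i\notin\mathcal{N}}y_i^2.
\]
The second sum is a constant independent of $\mathbf{x}$, while the first is nonnegative and vanishes exactly when $x_i=y_i$ for every $i\in\mathcal{N}$. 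This shows the minimum is attained uniquely at the vector whose components equal $y_i$ on $\mathcal{N}$ and $0$ off $\mathcal{N}$, which is exactly ${\rm P}_{\mathcal{B}_{\mathcal{N}}}(\mathbf{y})$.

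Alternatively, and more in keeping with the convex-analytic flavor of the paper, I would invoke the variational characterization: a point $\mathbf{p}\in\mathcal{B}_{\mathcal{N}}$ equals the projection of $\mathbf{y}$ onto $\mathcal{B}_{\mathcal{N}}$ if and only if $\langle\mathbf{y}-\mathbf{p},\mathbf{x}-\mathbf{p}\rangle\le 0$ for all $\mathbf{x}\in\mathcal{B}_{\mathcal{N}}$. Taking $\mathbf{p}={\rm P}_{\mathcal{B}_{\mathcal{N}}}(\mathbf{y})$, the residual $\mathbf{y}-\mathbf{p}$ is supported on the complement of $\mathcal{N}$, whereas every $\mathbf{x}-\mathbf{p}\in\mathcal{B}_{\mathcal{N}}$ is supported on $\mathcal{N}$; the two supports are disjoint, so the inner product is in fact $0$ and the inequality holds. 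Since $\mathcal{B}_{\mathcal{N}}$ is a subspace, this also yields the stronger orthogonality $\mathbf{y}-\mathbf{p}\perp\mathcal{B}_{\mathcal{N}}$. There is no genuine obstacle in this argument; the only points requiring care are that the projection be well-defined, which rests on the closedness from Lemma \ref{lem:BN_closed}, and that the coordinate split correctly isolate the constant tail $\sum_{i\notin\mathcal{N}}y_i^2$.
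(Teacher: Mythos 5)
Your proof is correct and follows essentially the same route as the paper: both split the squared distance $\|\mathbf{x}-\mathbf{y}\|_2^2$ into the sum over coordinates in $\mathcal{N}$ and the constant tail $\sum_{j\notin\mathcal{N}}|y_j|^2$, which is exactly the Pythagorean identity \eqref{eq:orthogonal_PN_xy} the paper uses to conclude that ${\rm P}_{\mathcal{B}_{\mathcal{N}}}(\mathbf{y})$ is the best $\ell_2$ approximation from $\mathcal{B}_{\mathcal{N}}$. Your added remarks on uniqueness and the variational (disjoint-support) characterization are correct but not needed beyond what the paper records.
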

\begin{proof} Let $\mathbf{y}\in\mathbb{R}^{N}$ be fixed.
For all $\mathbf{x}\in\mathcal{B}_{\mathcal{N}}$,
we have that
$$
\left\|\mathbf{x}-\mathbf{y}\right\|_2^2
=\sum_{j\in\mathcal{N}}\left|x_j-y_j\right|^2
+\sum_{j\notin\mathcal{N}}\left|y_j\right|^2.
$$
By the definition \eqref{P_B} of ${\rm P}_{\mathcal{B}_{\mathcal{N}}}$, we derive that
\begin{eqnarray}\label{eq:orthogonal_PN_xy}
 \left\|\mathbf{x}-\mathbf{y}\right\|_2^2= \left\|
 \mathbf{x}-
 {\rm P}_{\mathcal{B}_{\mathcal{N}}}\left(\mathbf{y}\right)\right\|_2^2
 +\left\|\mathbf{y}-
 {\rm P}_{\mathcal{B}_{\mathcal{N}}}\left(\mathbf{y}\right)\right\|_2^2.
 \end{eqnarray}
Therefore,  there holds
$$
\left\|\mathbf{y}- {\rm P}_{\mathcal{B}_{\mathcal{N}}}\left(\mathbf{y}\right)\right\|_2
\leq \left\|\mathbf{y}-\mathbf{x}\right\|_2,\ \  \mbox{for all}\ \ \mathbf{x}\in\mathcal{B}_{\mathcal{N}}.
$$
That is, ${\rm P}_{\mathcal{B}_{\mathcal{N}}}\left(\mathbf{y}\right)$ is the best  $\ell_2$ approximation of $\mathbf{y}$ from $\mathcal{B}_{\mathcal{N}}$. In other words,
 ${\rm P}_{\mathcal{B}_{\mathcal{N}}}$  is the orthogonal projection from $\mathbb{R}^{N}$ onto $\mathcal{B}_{\mathcal{N}}$.
\end{proof}

It is convenient to identify the proximity operator of the $\ell_0$ norm  with the projection ${\rm P}_{\mathcal{B}_{\mathcal{N}}}$.
We recall the closed-form formula of the proximity operator of the $\ell_0$ norm (see, for example \cite{SXZ}). For all $\mathbf{z}$ in $\mathbb{R}^{N}$,
\begin{equation}\label{prox_envl0-1}
\mathrm{prox}_{\beta\|\cdot\|_0} (\mathbf{z}) = \left[\mathrm{prox}_{\beta|\cdot|_0} (z_1),  \mathrm{prox}_{\beta|\cdot|_0} (z_2), \ldots, \mathrm{prox}_{\beta|\cdot|_0} (z_{N})\right]^\top,
\end{equation}
where $\top$ denotes the transpose and
\begin{equation}\label{prox_envl0-1-component}
\mathrm{prox}_{\beta|\cdot|_0} (z_i) = \left\{
                             \begin{array}{ll}
                              \left\{z_i\right\}, & \hbox{$|z_i|> \sqrt{2\beta}$;} \\
                              \left\{                                                  z_i,0\right\}, &
                              \hbox{$|z_i|= \sqrt{2\beta}$;} \\
                               \left\{0\right\}, & \hbox{otherwise.}
                             \end{array}
                           \right.
\end{equation}
\begin{lemma}\label{P_BN_prox_L0_two}
 Suppose that $\beta>0$  and $\mathbf{x},\mathbf{z} \in\mathbb{R}^{N}$ satisfy $\mathbf{x}={\rm P}_{\mathcal{B}_{N(\mathbf{x})}}(\mathbf{z})$.   If
 $$
 |z_j|\geq\sqrt{2\beta} \ \ \ {\rm for \ all}\  j\in N(\mathbf{x})\ \ {\rm and}\ \
 |z_j|<\sqrt{2\beta} \ \ \ {\rm for \ all}\  j\notin N(\mathbf{x}),
 $$
 then  $\mathbf{x}\in\mathrm{prox}_{\beta\|\cdot\|_0}\left(\mathbf{z}\right)$.
\end{lemma}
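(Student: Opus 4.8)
The plan is to reduce the whole statement to a componentwise verification driven by the closed-form description \eqref{prox_envl0-1}--\eqref{prox_envl0-1-component} of the proximity operator of the $\ell_0$ norm. That formula expresses the set $\mathrm{prox}_{\beta\|\cdot\|_0}(\mathbf{z})$ as the Cartesian product of the scalar sets $\mathrm{prox}_{\beta|\cdot|_0}(z_j)$, so to conclude that $\mathbf{x}\in\mathrm{prox}_{\beta\|\cdot\|_0}(\mathbf{z})$ it suffices to establish the scalar inclusion $x_j\in\mathrm{prox}_{\beta|\cdot|_0}(z_j)$ for every index $j\in\{1,2,\ldots,N\}$.

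First I would translate the hypothesis $\mathbf{x}={\rm P}_{\mathcal{B}_{N(\mathbf{x})}}(\mathbf{z})$ into explicit componentwise identities. Applying the defining formula \eqref{P_B} of the projection with the index set $\mathcal{N}=N(\mathbf{x})$, this equality says precisely that $x_j=z_j$ whenever $j\in N(\mathbf{x})$ and $x_j=0$ whenever $j\notin N(\mathbf{x})$. In particular this is internally consistent, since for $j\in N(\mathbf{x})$ one has $x_j\neq 0$, hence $z_j=x_j\neq 0$.

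Next I would check the scalar inclusion separately on and off the support. For $j\in N(\mathbf{x})$ the hypothesis gives $|z_j|\geq\sqrt{2\beta}$, so by \eqref{prox_envl0-1-component} the set $\mathrm{prox}_{\beta|\cdot|_0}(z_j)$ equals $\{z_j\}$ when $|z_j|>\sqrt{2\beta}$ and $\{z_j,0\}$ when $|z_j|=\sqrt{2\beta}$; in both cases $z_j$ lies in that set, and since $x_j=z_j$ we obtain $x_j\in\mathrm{prox}_{\beta|\cdot|_0}(z_j)$. For $j\notin N(\mathbf{x})$ the hypothesis gives $|z_j|<\sqrt{2\beta}$, so \eqref{prox_envl0-1-component} yields $\mathrm{prox}_{\beta|\cdot|_0}(z_j)=\{0\}$, and since $x_j=0$ the inclusion holds as well. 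This exhausts all indices, after which the product formula \eqref{prox_envl0-1} assembles the componentwise inclusions into $\mathbf{x}\in\mathrm{prox}_{\beta\|\cdot\|_0}(\mathbf{z})$.

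I do not anticipate any genuine obstacle; the argument is a routine but careful case check. The one point demanding a little attention is the boundary case $|z_j|=\sqrt{2\beta}$ on the support, where the proximity operator is set-valued. The lemma is deliberately phrased with the non-strict inequality $\geq$ precisely so that the value $z_j$ (equivalently $x_j$) still belongs to the two-element set $\{z_j,0\}$ in that borderline situation, which is exactly what keeps the inclusion valid.
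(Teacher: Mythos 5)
Your proposal is correct and follows essentially the same route as the paper's own proof: unpack the projection identity $\mathbf{x}={\rm P}_{\mathcal{B}_{N(\mathbf{x})}}(\mathbf{z})$ into the componentwise relations $x_j=z_j$ on the support and $x_j=0$ off it, then verify $x_j\in\mathrm{prox}_{\beta|\cdot|_0}(z_j)$ index by index via the closed-form formula \eqref{prox_envl0-1-component}. Your explicit attention to the boundary case $|z_j|=\sqrt{2\beta}$, where the scalar proximity set is $\{z_j,0\}$, is a welcome clarification that the paper leaves implicit.
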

\begin{proof}
As $\mathbf{x}={\rm P}_{\mathcal{B}_{N(\mathbf{x})}}(\mathbf{z})$, using the definition \eqref{P_B} of ${\rm P}_{\mathcal{B}_{\mathcal{N}}}$ leads to that
$x_j=z_j$ for all $j\in N(\mathbf{x})$, and $x_j=0$ for all $j\notin N(\mathbf{x})$.
On the other hand, under the hypothesis, applying formulas \eqref{prox_envl0-1} and \eqref{prox_envl0-1-component} yields  that   $\mathrm{prox}_{\beta|\cdot|_0} (z_j)$ is equal to $\{z_j\}$ or $\{z_j,\ 0\}$ for all $j\in N(\mathbf{x})$,
and  $\mathrm{prox}_{\beta|\cdot|_0} (z_j)=\{0\}$
for all $j\notin N(\mathbf{x})$.
Summarizing the above leads to the desired conclusion.
\end{proof}


We next present a reverse result of Lemma \ref{P_BN_prox_L0_two}.
\begin{lemma}\label{P_BN_prox_L0}
Suppose that $\beta>0$ and $\mathbf{x},\mathbf{z} \in\mathbb{R}^{N}$ satisfy $\mathbf{x}\in\mathrm{prox}_{\beta\|\cdot\|_0}\left(\mathbf{z}\right)$. Then

(i) for all $j\in N(\mathbf{x})$, $|x_j|=|z_j|\geq\sqrt{2\beta}$;

(ii) for all $j\notin N(\mathbf{x})$, $x_j=0$ and $|z_j|\leq\sqrt{2\beta}$;

(iii) $\mathbf{x}={\rm P}_{\mathcal{B}_{N(\mathbf{x})}}(\mathbf{z})$.
\end{lemma}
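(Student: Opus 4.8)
The plan is to exploit the separability of the proximity operator of the $\ell_0$ norm, since the closed-form expression \eqref{prox_envl0-1} shows that $\mathbf{x}\in\mathrm{prox}_{\beta\|\cdot\|_0}\left(\mathbf{z}\right)$ holds if and only if $x_j\in\mathrm{prox}_{\beta|\cdot|_0}(z_j)$ for every index $j\in\{1,2,\ldots,N\}$. Thus the entire lemma reduces to a one-dimensional case analysis governed by the scalar formula \eqref{prox_envl0-1-component}, which distinguishes the three regimes $|z_j|>\sqrt{2\beta}$, $|z_j|=\sqrt{2\beta}$, and $|z_j|<\sqrt{2\beta}$. My strategy is to fix an arbitrary coordinate $j$, read off the possible values of $x_j$ from \eqref{prox_envl0-1-component}, and then sort these against the membership of $j$ in the support $N(\mathbf{x})$.

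To prove (i), I would take $j\in N(\mathbf{x})$, so that $x_j\neq 0$. Because $x_j\in\mathrm{prox}_{\beta|\cdot|_0}(z_j)$ and $x_j\neq 0$, the regime $|z_j|<\sqrt{2\beta}$ is excluded, as it forces $\mathrm{prox}_{\beta|\cdot|_0}(z_j)=\{0\}$. In the remaining two regimes, the only nonzero element of $\mathrm{prox}_{\beta|\cdot|_0}(z_j)$ is $z_j$ itself, whence $x_j=z_j$ and therefore $|x_j|=|z_j|\geq\sqrt{2\beta}$. To prove (ii), I would take $j\notin N(\mathbf{x})$, so that $x_j=0$, and argue by contradiction: if $|z_j|>\sqrt{2\beta}$, then \eqref{prox_envl0-1-component} gives $\mathrm{prox}_{\beta|\cdot|_0}(z_j)=\{z_j\}$ with $z_j\neq 0$, contradicting $x_j=0\in\mathrm{prox}_{\beta|\cdot|_0}(z_j)$. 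Hence $|z_j|\leq\sqrt{2\beta}$.

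For (iii), I would assemble the coordinatewise conclusions: part (i) yields $x_j=z_j$ for every $j\in N(\mathbf{x})$, while the definition of the support already gives $x_j=0$ for every $j\notin N(\mathbf{x})$. Comparing these two identities with the defining formula \eqref{P_B} of ${\rm P}_{\mathcal{B}_{\mathcal{N}}}$ applied with $\mathcal{N}=N(\mathbf{x})$ shows that $\mathbf{x}$ and ${\rm P}_{\mathcal{B}_{N(\mathbf{x})}}(\mathbf{z})$ agree in every coordinate, so $\mathbf{x}={\rm P}_{\mathcal{B}_{N(\mathbf{x})}}(\mathbf{z})$. The only subtlety worth careful attention is the boundary case $|z_j|=\sqrt{2\beta}$, where the set-valued prox contains both $z_j$ and $0$; the argument must invoke whether $j$ lies in $N(\mathbf{x})$ to select the correct value, but this is exactly what the support-based dichotomy resolves, so no genuine difficulty arises beyond bookkeeping.
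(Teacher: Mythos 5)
Your proposal is correct and follows essentially the same route as the paper: both arguments read off items (i) and (ii) coordinatewise from the closed-form formulas \eqref{prox_envl0-1} and \eqref{prox_envl0-1-component}, and then obtain (iii) by noting that $x_j=z_j$ on $N(\mathbf{x})$ and $x_j=0$ off it, which matches the definition \eqref{P_B}. You simply spell out the three-regime case analysis (including the boundary case $|z_j|=\sqrt{2\beta}$) that the paper dismisses as immediate.
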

\begin{proof}
Items (i) and (ii) follow immediately from formulas \eqref{prox_envl0-1} and \eqref{prox_envl0-1-component}.

It remains to prove Item (iii). Clearly, for all $j\notin N(\mathbf{x})$, there holds $x_j=0$. For all
$j\in N(\mathbf{x})$, by Item (i) of this lemma, we have that $|x_j|=|z_j|\geq \sqrt{2\beta}$. According to formula \eqref{prox_envl0-1-component}, we obtain that $x_j=z_j$. By the definition \eqref{P_B} of ${\rm P}_{\mathcal{B}_{{N}(\mathbf{x})}}(\mathbf{z})$, we confirm (iii).
\end{proof}

We now apply Lemma \ref{P_BN_prox_L0} to a sequence generated by $\mathrm{prox}_{\beta\|\cdot\|_0}$.

\begin{lemma}\label{lemma:x_sparsity}
If $\left\{\mathbf{z}^{k}\right\}$ is a sequence in $\mathbb{R}^{N}$ and for a fixed $\beta>0$, $\mathbf{x}^{k}\in\mathrm{prox}_{\beta\|\cdot\|_0}\left(\mathbf{z}^{k}\right)$, then

(i) $|x_j^k|\geq \sqrt{2\beta}$  for all $j\in N(\mathbf{x}^k)$,
and $x_j^k=0$ for all $j\in \overline{N(\mathbf{x}^k)}$,

(ii) if $N(\mathbf{x}^k)\neq N(\mathbf{x}^{k+1})$, $\|\mathbf{x}^{k+1}-\mathbf{x}^{k}\|_2\geq \sqrt{2\beta}$.

\noindent where $\overline{\mathcal{C}}$ denotes the complement of the set  $\mathcal{C}\subseteq\{1,2,\dots, N\}$ in $\{1,2,\dots, N\}$.
\end{lemma}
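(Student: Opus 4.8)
The plan is to prove both items as direct consequences of Lemma \ref{P_BN_prox_L0}, which already characterizes the components of any vector in the image of $\mathrm{prox}_{\beta\|\cdot\|_0}$. Item (i) is essentially a restatement: since $\mathbf{x}^k\in\mathrm{prox}_{\beta\|\cdot\|_0}(\mathbf{z}^k)$, applying Item (i) of Lemma \ref{P_BN_prox_L0} to each $k$ gives $|x_j^k|=|z_j^k|\geq\sqrt{2\beta}$ for every $j\in N(\mathbf{x}^k)$, and applying Item (ii) gives $x_j^k=0$ for every $j\notin N(\mathbf{x}^k)$, i.e. for every $j\in\overline{N(\mathbf{x}^k)}$. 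So the first step is simply to invoke the previous lemma component-wise and record the two conclusions.

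For Item (ii), the idea is to exploit the sharp separation of magnitudes guaranteed by Item (i): nonzero components of any such vector have absolute value at least $\sqrt{2\beta}$, while the components off the support are exactly zero. Assume $N(\mathbf{x}^k)\neq N(\mathbf{x}^{k+1})$. Then the symmetric difference of the two supports is nonempty, so there exists an index $j$ lying in exactly one of them, say $j\in N(\mathbf{x}^{k+1})\setminus N(\mathbf{x}^k)$ (the reverse case is symmetric). For such an index, Item (i) forces $x_j^{k+1}=0$ while $|x_j^k|\geq\sqrt{2\beta}$ (or vice versa), so that $|x_j^{k+1}-x_j^k|\geq\sqrt{2\beta}$. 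Since $\|\mathbf{x}^{k+1}-\mathbf{x}^k\|_2^2=\sum_{i=1}^N|x_i^{k+1}-x_i^k|^2$ dominates the single term $|x_j^{k+1}-x_j^k|^2\geq 2\beta$, we conclude $\|\mathbf{x}^{k+1}-\mathbf{x}^k\|_2\geq\sqrt{2\beta}$.

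The only point requiring a little care is extracting the separating index correctly. Because $N(\mathbf{x}^k)\neq N(\mathbf{x}^{k+1})$, at least one of the two set differences $N(\mathbf{x}^{k+1})\setminus N(\mathbf{x}^k)$ and $N(\mathbf{x}^k)\setminus N(\mathbf{x}^{k+1})$ is nonempty; in either case the index that is in one support but not the other produces a coordinate where one vector has magnitude $\geq\sqrt{2\beta}$ and the other is zero. I do not anticipate any genuine obstacle here, since the entire argument reduces to the per-coordinate magnitude gap already established in Lemma \ref{P_BN_prox_L0}; the proof is short and the main task is bookkeeping the symmetric-difference case distinction and then bounding the full $\ell_2$ norm below by the contribution of a single coordinate.
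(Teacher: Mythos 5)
Your proposal is correct and follows essentially the same route as the paper: Item (i) is read off from Lemma \ref{P_BN_prox_L0}, and Item (ii) picks an index in the symmetric difference of the supports, where one coordinate has magnitude at least $\sqrt{2\beta}$ and the other vanishes, so that single coordinate bounds $\|\mathbf{x}^{k+1}-\mathbf{x}^{k}\|_2$ from below. No gaps.
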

\begin{proof}
Item (i) is immediately obtained from Lemma \ref{P_BN_prox_L0}.

To prove Item (ii) we assume that $N(\mathbf{x}^k)\neq N(\mathbf{x}^{k+1})$
for some $k>0$. Then, there exists at least an index $j$ such that $j\in N(\mathbf{x}^k)\cap\overline{N(\mathbf{x}^{k+1})}$ or $j\in \overline{N(\mathbf{x}^{k})}\cap N(\mathbf{x}^{k+1})$. By Item (i), $|x_j^k|\geq\sqrt{2\beta}$ and $x_j^{k+1}=0$ for the first case, or $x_j^{k}=0$ and $|x_j^{k+1}|\geq\sqrt{2\beta}$ for the second case. For the both cases, there holds $|x_j^{k+1}-x_j^k|\geq \sqrt{2\beta}$. It follows that
$$
\|\mathbf{x}^{k+1}-\mathbf{x}^k\|_2\geq|x_j^{k+1}-x_j^{k}|\geq \sqrt{2\beta},
$$
proving Item (ii).
\end{proof}

When the sequence $\mathbf{x}^{k}$ described in  Lemma \ref{lemma:x_sparsity} is convergent, we have the following result.

\begin{lemma}\label{lemma:support_xstar}
If $\left\{\mathbf{z}^{k}\right\}$ is a sequence in $\mathbb{R}^{N}$ and for a fixed $\beta>0$, sequence $\{\mathbf{x}^k\}$ defined by $\mathbf{x}^{k}\in\mathrm{prox}_{\beta\|\cdot\|_0}\left(\mathbf{z}^{k}\right)$ converges to $\mathbf{x}^{\star}$, then there exists
an integer $V>0$ such that $N(\mathbf{x}^{k})=N(\mathbf{x}^{\star})$ for all $k\geq V$.
\end{lemma}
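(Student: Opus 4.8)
The plan is to exploit two complementary facts: the convergence (hence Cauchy property) of $\{\mathbf{x}^k\}$ together with the discrete ``jump'' behavior recorded in Lemma \ref{lemma:x_sparsity}, and the one-sided support inclusion supplied by Lemma \ref{corollary:support_xk_any}. The point is that convergence by itself gives only that $N(\mathbf{x}^\star)\subseteq N(\mathbf{x}^k)$ eventually; the uniform threshold $\sqrt{2\beta}$ coming from the proximity operator of the $\ell_0$ norm is what upgrades this to equality.

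First I would show that the supports $N(\mathbf{x}^k)$ eventually stabilize. Since $\{\mathbf{x}^k\}$ converges, it is Cauchy, so there exists $V_1>0$ with $\|\mathbf{x}^{k+1}-\mathbf{x}^k\|_2<\sqrt{2\beta}$ for all $k\geq V_1$. The contrapositive of Item (ii) of Lemma \ref{lemma:x_sparsity} then forces $N(\mathbf{x}^k)=N(\mathbf{x}^{k+1})$ for every $k\geq V_1$. Consequently all these supports coincide with a single fixed index set, say $\mathcal{N}_0$, so that $N(\mathbf{x}^k)=\mathcal{N}_0$ for all $k\geq V_1$.

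Next I would identify $\mathcal{N}_0$ with $N(\mathbf{x}^\star)$ by proving both inclusions. For $N(\mathbf{x}^\star)\subseteq\mathcal{N}_0$, Lemma \ref{corollary:support_xk_any} provides an integer $V_2>0$ with $N(\mathbf{x}^\star)\subseteq N(\mathbf{x}^k)=\mathcal{N}_0$ whenever $k\geq\max\{V_1,V_2\}$. For the reverse inclusion $\mathcal{N}_0\subseteq N(\mathbf{x}^\star)$, fix $j\in\mathcal{N}_0$; then $j\in N(\mathbf{x}^k)$ for all $k\geq V_1$, so Item (i) of Lemma \ref{lemma:x_sparsity} yields $|x_j^k|\geq\sqrt{2\beta}$. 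Passing to the limit via $x_j^k\to x_j^\star$ gives $|x_j^\star|\geq\sqrt{2\beta}>0$, hence $x_j^\star\neq 0$ and $j\in N(\mathbf{x}^\star)$.

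Combining the two inclusions gives $\mathcal{N}_0=N(\mathbf{x}^\star)$, and setting $V:=\max\{V_1,V_2\}$ yields $N(\mathbf{x}^k)=N(\mathbf{x}^\star)$ for all $k\geq V$, as desired. I do not expect a serious obstacle here; the only delicate point is precisely the one flagged by the remark following Lemma \ref{corollary:support_xk_any}, namely that convergence alone can fail to equate the limiting support with the eventual supports. What rescues equality in this setting is the uniform lower bound $\sqrt{2\beta}$ on nonzero components enforced by $\mathrm{prox}_{\beta\|\cdot\|_0}$: it simultaneously prevents the support from oscillating (through the jump estimate) and guarantees that every surviving index stays bounded away from zero in the limit.
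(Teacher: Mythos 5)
Your proof is correct and follows essentially the same route as the paper: convergence makes $\{\mathbf{x}^k\}$ Cauchy, so eventually $\|\mathbf{x}^{k+1}-\mathbf{x}^k\|_2<\sqrt{2\beta}$ and Item (ii) of Lemma \ref{lemma:x_sparsity} forces the supports to stabilize. The only difference is that the paper disposes of the final identification of the stabilized support with $N(\mathbf{x}^\star)$ by a terse ``letting $m\to\infty$,'' whereas you spell out the two inclusions explicitly (via Lemma \ref{corollary:support_xk_any} and the uniform lower bound $|x_j^k|\geq\sqrt{2\beta}$ from Item (i)), which is a more careful rendering of the same argument.
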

\begin{proof}
Since the sequence $\{\mathbf{x}^{k}\}$ converges to $\mathbf{x}^{\star}$, we have that
$$
\lim\limits_{k\rightarrow \infty}\|\mathbf{x}^{k+1}-\mathbf{x}^{k}\|_2=0.
$$
Hence, there exists a number $V>0$ such that
$$
\|\mathbf{x}^{k+1}-\mathbf{x}^{k}\|_2<\sqrt{2\beta}, \ \ {\rm for\ all}\ k\geq V.
$$
It follows from Item (ii) of Lemma \ref{lemma:x_sparsity} that $N(\mathbf{x}^{k})=N(\mathbf{x}^{k+1})$ for all $k\geq V$. Thus, for any fixed $k\geq V$ and all $m>0$, $N(\mathbf{x}^{k})=N(\mathbf{x}^{k+m})$.
Since $\{\mathbf{x}^{k}\}$ converges to $\mathbf{x}^{\star}$, letting $m\to \infty$, we obtain that $N(\mathbf{x}^{k})=N(\mathbf{x}^{\star})$ for all $k\geq V$.
\end{proof}

The next proposition is a special case of Item (ii) of Lemma 4.7 in \cite{Xu}. This proposition plays a crucial role in reducing the non-convex optimization \eqref{model:two_variable} to a convex one.

\begin{proposition}\label{pro:sequence_BN}
Let  $\left(\mathbf{x}^{\star},\mathbf{y}^{\star}\right)\in \mathbb{R}^{N}\times \mathbb{R}^{N}$ be given and set $\mathcal{N}:=N(\mathbf{x}^{\star})$. If $\left\{(\mathbf{x}^k,\mathbf{y}^k)\right\}$
is a convergent sequence in $\mathcal{B}_{\mathcal{N}}\times\mathbb{R}^{N}$ with limit
$(\mathbf{x}^{\star},\mathbf{y}^{\star})$, then, there  exists
an integer $V>0$  such  that  $N(\mathbf{x}^{k})=N(\mathbf{x}^{\star})$  for all $k\geq V$.
\end{proposition}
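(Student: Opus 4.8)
The plan is to prove set equality $N(\mathbf{x}^{k})=N(\mathbf{x}^{\star})$ by establishing both inclusions separately, each holding for all sufficiently large $k$. The key observation is that the hypothesis packages exactly the two pieces needed: membership of each iterate in $\mathcal{B}_{\mathcal{N}}$ supplies one inclusion for \emph{every} $k$, while convergence of the iterates to $\mathbf{x}^{\star}$ supplies the reverse inclusion for large $k$ via Lemma \ref{corollary:support_xk_any}.

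First I would extract from the convergence of $\left\{(\mathbf{x}^{k},\mathbf{y}^{k})\right\}$ to $(\mathbf{x}^{\star},\mathbf{y}^{\star})$ the componentwise fact that $\mathbf{x}^{k}\to\mathbf{x}^{\star}$ in $\|\cdot\|_2$; the $\mathbf{y}$-components play no role and can be discarded. Next I would invoke the defining property \eqref{def:B_N} of $\mathcal{B}_{\mathcal{N}}$: since $\mathbf{x}^{k}\in\mathcal{B}_{\mathcal{N}}$ for all $k$, we have $N(\mathbf{x}^{k})\subseteq\mathcal{N}$, and because $\mathcal{N}$ was set equal to $N(\mathbf{x}^{\star})$, this reads
\begin{equation*}
N(\mathbf{x}^{k})\subseteq N(\mathbf{x}^{\star})\qquad\text{for all }k\geq 1.
\end{equation*}
This gives one inclusion with no restriction on $k$.

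For the reverse inclusion I would apply Lemma \ref{corollary:support_xk_any} directly to the convergent sequence $\mathbf{x}^{k}\to\mathbf{x}^{\star}$: there exists an integer $V>0$ such that $N(\mathbf{x}^{\star})\subseteq N(\mathbf{x}^{k})$ for all $k\geq V$. Combining the two inclusions, for every $k\geq V$ we simultaneously have $N(\mathbf{x}^{k})\subseteq N(\mathbf{x}^{\star})$ and $N(\mathbf{x}^{\star})\subseteq N(\mathbf{x}^{k})$, whence $N(\mathbf{x}^{k})=N(\mathbf{x}^{\star})$, which is the claim.

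There is no serious obstacle here; the proposition is essentially a bookkeeping consequence of two facts already in hand. The only point worth flagging is conceptual rather than technical: plain convergence never forces support equality, since in the limit nonzero components may cancel and the support can only shrink (this is exactly the content of Lemma \ref{corollary:support_xk_any}, and the paper already warns that its reverse inclusion fails in general). What rescues the equality is the constraint $\mathbf{x}^{k}\in\mathcal{B}_{\mathcal{N}}$ with $\mathcal{N}=N(\mathbf{x}^{\star})$, which pins the supports from above by precisely the support of the limit and thereby eliminates the room for the support to be strictly larger. Making that interplay explicit is the entire substance of the argument.
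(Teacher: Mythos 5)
Your proposal is correct and matches the paper's own proof essentially verbatim: both arguments extract $\mathbf{x}^{k}\to\mathbf{x}^{\star}$ from the convergence of the pairs, apply Lemma \ref{corollary:support_xk_any} to obtain $N(\mathbf{x}^{\star})\subseteq N(\mathbf{x}^{k})$ for large $k$, and use the membership $\mathbf{x}^{k}\in\mathcal{B}_{\mathcal{N}}$ with $\mathcal{N}=N(\mathbf{x}^{\star})$ for the reverse inclusion. No gaps.
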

\begin{proof}
As $\left\{(\mathbf{x}^k,\mathbf{y}^k)\right\}$ converges to $(\mathbf{x}^{\star},\mathbf{y}^{\star})$, we have that the sequence $\{\mathbf{x}^k\}$ converges to $\mathbf{x}^{\star}$. By Lemma \ref{corollary:support_xk_any}, there exists an integer  $V>0$  such  that $N(\mathbf{x}^{\star})\subseteq N(\mathbf{x}^{k})$ for
all  $k\geq V$.  As $\mathbf{x}^{k}\in\mathcal{B}_{\mathcal{N}}$ for all $k$, the definition \eqref{def:B_N} of $\mathcal{B}_{\mathcal{N}}$ leads to $N(\mathbf{x}^{k})\subseteq N(\mathbf{x}^{\star})$. Combining the above two inclusions yields the desired result of this proposition.
\end{proof}

We are now ready to present a result which connects the non-convex model~\eqref{model:two_variable} with the convex model  \eqref{model:convex_model}.
We first review the definition of a local minimizer of the non-convex model~\eqref{model:two_variable}.
If there exists a $\delta>0$ such that
$$
F\left(\mathbf{x}^{\star},\mathbf{y}^{\star}\right)
\leq F\left(\mathbf{x}^{\star}+\triangle \mathbf{x},\mathbf{y}^{\star}+\triangle \mathbf{y}\right),
\ \ {\rm for \ all }\ \left\|\triangle \mathbf{x}\right\|\leq \delta, \ \ \left\|\triangle \mathbf{y}\right\|\leq \delta,
$$
we call $\left(\mathbf{x}^{\star},\mathbf{y}^{\star}\right)$ a local minimizer of the non-convex model~\eqref{model:two_variable}.

\begin{theorem}\label{lemma:relation_nonconv_convex}
Let  $\beta, \gamma>0$, and $\left(\mathbf{x}^{\star},\mathbf{y}^{\star}\right)
\in\mathbb{R}^{N}\times \mathbb{R}^{N}$ be given.
The pair $(\mathbf{x}^{\star},\mathbf{y}^{\star})$
is a local minimizer of the non-convex model~\eqref{model:two_variable}
if and only if $(\mathbf{x}^{\star},\mathbf{y}^{\star})$ is a minimizer of the convex model \eqref{model:convex_model} with $\mathcal{N}:=N(\mathbf{x}^{\star})$.
\end{theorem}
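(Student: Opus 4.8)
The plan is to exploit the decomposition $F(\mathbf{x},\mathbf{y})=G(\mathbf{x},\mathbf{y})+\gamma\|\mathbf{x}\|_0$ together with the observation that, near $\mathbf{x}^{\star}$, the discontinuous term $\|\mathbf{x}\|_0$ behaves rigidly relative to $\mathcal{N}:=N(\mathbf{x}^{\star})$. The single quantity that controls everything is $m:=\min\{|x_j^{\star}|:j\in\mathcal{N}\}>0$, with the convention $m=+\infty$ when $\mathcal{N}=\varnothing$. As soon as a perturbation satisfies $\|\triangle\mathbf{x}\|_2<m$, every coordinate of $\mathbf{x}^{\star}+\triangle\mathbf{x}$ indexed by $\mathcal{N}$ stays nonzero, so $\mathcal{N}\subseteq N(\mathbf{x}^{\star}+\triangle\mathbf{x})$ and hence $\|\mathbf{x}^{\star}+\triangle\mathbf{x}\|_0\geq|\mathcal{N}|=\|\mathbf{x}^{\star}\|_0$. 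I will use this elementary support-monotonicity in both directions, noting that $G$ is convex and continuous and that $\mathcal{B}_{\mathcal{N}}\times\mathbb{R}^{N}$ is a closed convex set by Lemma \ref{lem:BN_closed}.

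For the forward implication, suppose $(\mathbf{x}^{\star},\mathbf{y}^{\star})$ is a local minimizer of \eqref{model:two_variable} with radius $\delta$, and shrink $\delta$ so that $\delta<m$. If in addition $\mathbf{x}^{\star}+\triangle\mathbf{x}\in\mathcal{B}_{\mathcal{N}}$, then $\mathcal{N}\subseteq N(\mathbf{x}^{\star}+\triangle\mathbf{x})\subseteq\mathcal{N}$, so the support equals $\mathcal{N}$ exactly and $\|\mathbf{x}^{\star}+\triangle\mathbf{x}\|_0=|\mathcal{N}|=\|\mathbf{x}^{\star}\|_0$. The $\ell_0$ terms then cancel in the local-minimality inequality for $F$, leaving $G(\mathbf{x}^{\star},\mathbf{y}^{\star})\leq G(\mathbf{x}^{\star}+\triangle\mathbf{x},\mathbf{y}^{\star}+\triangle\mathbf{y})$ for all such admissible perturbations. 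Thus $(\mathbf{x}^{\star},\mathbf{y}^{\star})$ is a \emph{local} minimizer of the convex function $G$ on the convex set $\mathcal{B}_{\mathcal{N}}\times\mathbb{R}^{N}$; since a local minimizer of a convex function over a convex set is automatically a global one, $(\mathbf{x}^{\star},\mathbf{y}^{\star})$ solves the convex model \eqref{model:convex_model}.

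For the reverse implication, assume $(\mathbf{x}^{\star},\mathbf{y}^{\star})$ globally minimizes $G$ over $\mathcal{B}_{\mathcal{N}}\times\mathbb{R}^{N}$ with $\mathcal{N}=N(\mathbf{x}^{\star})$, so $\|\mathbf{x}^{\star}\|_0=|\mathcal{N}|$. I would pick $\delta<m$ small enough that continuity of $G$ forces $|G(\mathbf{x}^{\star}+\triangle\mathbf{x},\mathbf{y}^{\star}+\triangle\mathbf{y})-G(\mathbf{x}^{\star},\mathbf{y}^{\star})|<\gamma$ on the whole $\delta$-box, and then split into two cases. If $\mathbf{x}^{\star}+\triangle\mathbf{x}\in\mathcal{B}_{\mathcal{N}}$, its $\ell_0$ value is exactly $|\mathcal{N}|$ and global minimality of $G$ gives $F(\mathbf{x}^{\star}+\triangle\mathbf{x},\mathbf{y}^{\star}+\triangle\mathbf{y})=G(\cdot)+\gamma|\mathcal{N}|\geq G(\mathbf{x}^{\star},\mathbf{y}^{\star})+\gamma|\mathcal{N}|=F(\mathbf{x}^{\star},\mathbf{y}^{\star})$. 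If instead $\mathbf{x}^{\star}+\triangle\mathbf{x}\notin\mathcal{B}_{\mathcal{N}}$, there is a nonzero coordinate outside $\mathcal{N}$, which together with $\mathcal{N}\subseteq N(\mathbf{x}^{\star}+\triangle\mathbf{x})$ yields $\|\mathbf{x}^{\star}+\triangle\mathbf{x}\|_0\geq|\mathcal{N}|+1$; then the fixed gain $\gamma$ in the penalty beats the strictly smaller change in $G$, so $F(\mathbf{x}^{\star}+\triangle\mathbf{x},\mathbf{y}^{\star}+\triangle\mathbf{y})>\bigl(G(\mathbf{x}^{\star},\mathbf{y}^{\star})-\gamma\bigr)+\gamma(|\mathcal{N}|+1)=F(\mathbf{x}^{\star},\mathbf{y}^{\star})$. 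In both cases the local-minimality inequality holds.

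The main obstacle is this second case of the reverse direction: it is precisely where the non-convexity of $\|\cdot\|_0$ must be controlled, and it succeeds only because enlarging the support costs a fixed jump of at least $\gamma$, whereas $G$ varies continuously and hence by strictly less than $\gamma$ on a small enough box. The delicate point is making the two constraints on $\delta$ coexist, namely $\delta<m$ to keep the existing support alive and $\delta$ small enough for the $\gamma$-continuity estimate; the degenerate case $\mathbf{x}^{\star}=\mathbf{0}$ (so $\mathcal{N}=\varnothing$) is absorbed by the $m=+\infty$ convention, leaving only the continuity threshold.
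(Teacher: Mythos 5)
Your argument is correct, and it takes a genuinely different route from the paper: the paper's entire proof is a one-line reduction to Corollary 4.9 of \cite{Xu} (with $\phi(\mathbf{y}):=\frac{1}{2}\|\mathbf{K}\mathbf{y}-\mathbf{r}\|_2^2$, $\mu:=\frac{\gamma}{2\beta}$ and $D:=\mathbf{I}$), whereas you give a self-contained elementary proof. Your two mechanisms are exactly the right ones and both check out: (a) support rigidity, i.e.\ for $\|\triangle\mathbf{x}\|_2<m:=\min\{|x_j^{\star}|:j\in N(\mathbf{x}^{\star})\}$ one has $N(\mathbf{x}^{\star})\subseteq N(\mathbf{x}^{\star}+\triangle\mathbf{x})$, which in the forward direction cancels the $\ell_0$ terms for perturbations staying in $\mathcal{B}_{\mathcal{N}}$ and lets convexity upgrade local to global minimality of $G$ over the convex set $\mathcal{B}_{\mathcal{N}}\times\mathbb{R}^{N}$; and (b) the quantized penalty, i.e.\ leaving $\mathcal{B}_{\mathcal{N}}$ forces $\|\mathbf{x}^{\star}+\triangle\mathbf{x}\|_0\geq|\mathcal{N}|+1$, so the jump of at least $\gamma$ in the penalty dominates the continuous (hence, on a small enough box, $<\gamma$) variation of $G$. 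The degenerate case $\mathcal{N}=\varnothing$ is handled cleanly by your $m=+\infty$ convention, since then $\mathcal{B}_{\mathcal{N}}=\{\mathbf{0}\}$ and only the continuity threshold on $\delta$ is active. What the paper's approach buys is brevity and the generality of the cited framework (arbitrary convex smooth fidelity $\phi$ and transform $D$); what yours buys is a transparent, verifiable argument that makes explicit why the non-convexity of $\|\cdot\|_0$ is harmless locally --- in effect you have reproved the relevant special case of the cited corollary, which is arguably preferable for a reader who does not have \cite{Xu} at hand.
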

\begin{proof}
 This result follows directly from Corollary 4.9 of \cite{Xu}, with $\phi(\mathbf{y}):=\frac{1}{2}\|{\bf K}\mathbf{y}-\mathbf{r}\|_2^2$, $\mu:=\frac{\gamma}{2\beta}$ and $D:=\mathbf{I}$.
\end{proof}

In Proposition \ref{pro:model_relation}, we have identified a global minimizer of model~\eqref{model:two_variable} with that of model \eqref{model:general}.
In the following proposition, inspired by Theorem 4.4 in \cite{Xu}, we explore the relation between a local minimizer of
 model~\eqref{model:two_variable} and that of model \eqref{model:general}.

\begin{proposition}\label{pro:model_relation_localm}
Suppose that $\beta,  \gamma>0$ and $(\mathbf{x}^{\star},\mathbf{y}^{\star})\in\mathbb{R}^{N}\times \mathbb{R}^{N}$   satisfies  $\mathbf{x}^{\star}\in\mathrm{prox}_{\beta\|\cdot\|_0} \left(\mathbf{y}^{\star}\right)$.
If $(\mathbf{x}^{\star},\mathbf{y}^{\star})$ is a local minimizer of model~\eqref{model:two_variable} and $|y_j^{\star}|\neq\sqrt{2\beta}$ for all $j\in N(\mathbf{y}^{\star})$, then $\mathbf{y}^{\star}$ is a local minimizer of model \eqref{model:general}. Conversely, if $\mathbf{y}^{\star}$ is a local minimizer of model \eqref{model:general}, then  $(\mathbf{x}^{\star},\mathbf{y}^{\star})$ is a local minimizer of model~\eqref{model:two_variable}.
\end{proposition}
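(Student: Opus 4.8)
The plan is to anchor both implications on the envelope identity $Q(\mathbf{y})=\min_{\mathbf{x}\in\mathbb{R}^N}F(\mathbf{x},\mathbf{y})$, which is immediate from the definition \eqref{Eq_envelope} of the envelope together with the definitions \eqref{Def-Q} and \eqref{def:F} of $Q$ and $F$, and on the equality $Q(\mathbf{y}^{\star})=F(\mathbf{x}^{\star},\mathbf{y}^{\star})$ furnished by \eqref{Q_F_relation}, valid since $\mathbf{x}^{\star}\in\mathrm{prox}_{\beta\|\cdot\|_0}(\mathbf{y}^{\star})$.

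For the converse implication I would argue directly. Let $\delta>0$ be a local-minimality radius for $\mathbf{y}^{\star}$ in model \eqref{model:general}. For any pair $(\mathbf{x},\mathbf{y})$ with $\|\mathbf{y}-\mathbf{y}^{\star}\|_2\le\delta$ and \emph{arbitrary} $\mathbf{x}$, the envelope identity and local minimality give the chain $F(\mathbf{x},\mathbf{y})\ge\min_{\mathbf{x}'}F(\mathbf{x}',\mathbf{y})=Q(\mathbf{y})\ge Q(\mathbf{y}^{\star})=F(\mathbf{x}^{\star},\mathbf{y}^{\star})$. Thus $(\mathbf{x}^{\star},\mathbf{y}^{\star})$ minimizes $F$ on the whole slab $\|\mathbf{y}-\mathbf{y}^{\star}\|_2\le\delta$, and in particular on the ball of radius $\delta$ about $(\mathbf{x}^{\star},\mathbf{y}^{\star})$, so it is a local minimizer of model \eqref{model:two_variable}.

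The forward implication is the delicate one, and its only obstacle is the discontinuity of $\mathrm{prox}_{\beta\|\cdot\|_0}$ at the threshold $\sqrt{2\beta}$: to pass from two-variable local minimality of $F$ to local minimality of $Q$, I must know that the proximity point associated with a nearby $\mathbf{y}$ stays close to $\mathbf{x}^{\star}$, which fails exactly when a coordinate of $\mathbf{y}^{\star}$ sits on the threshold. The hypothesis $|y_j^{\star}|\ne\sqrt{2\beta}$ removes this. Using Lemma \ref{P_BN_prox_L0}, I would first record $N(\mathbf{x}^{\star})\subseteq N(\mathbf{y}^{\star})$ and then upgrade the lemma's inequalities into the strict dichotomy $|y_j^{\star}|>\sqrt{2\beta}$ for $j\in N(\mathbf{x}^{\star})$ and $|y_j^{\star}|<\sqrt{2\beta}$ for $j\notin N(\mathbf{x}^{\star})$, with no coordinate at the threshold. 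By continuity these strict inequalities persist on a ball of some radius $\delta'>0$ about $\mathbf{y}^{\star}$, and there the component formula \eqref{prox_envl0-1-component} shows that $\mathrm{prox}_{\beta\|\cdot\|_0}(\mathbf{y})$ is single-valued and equals $\mathrm{P}_{\mathcal{B}_{N(\mathbf{x}^{\star})}}(\mathbf{y})$.

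To finish I would set $\mathbf{x}_{\mathbf{y}}:=\mathrm{P}_{\mathcal{B}_{N(\mathbf{x}^{\star})}}(\mathbf{y})$ and use the nonexpansive estimate $\|\mathbf{x}_{\mathbf{y}}-\mathbf{x}^{\star}\|_2\le\|\mathbf{y}-\mathbf{y}^{\star}\|_2$, valid because the projection discards the off-support coordinates and because $\mathbf{x}^{\star}=\mathrm{P}_{\mathcal{B}_{N(\mathbf{x}^{\star})}}(\mathbf{y}^{\star})$ by Lemma \ref{P_BN_prox_L0}(iii). Letting $\delta$ be a local-minimality radius of $F$ at $(\mathbf{x}^{\star},\mathbf{y}^{\star})$ and taking $\delta'':=\min\{\delta,\delta'\}$, every $\mathbf{y}$ with $\|\mathbf{y}-\mathbf{y}^{\star}\|_2\le\delta''$ yields a pair $(\mathbf{x}_{\mathbf{y}},\mathbf{y})$ inside the $\delta$-neighborhood of $(\mathbf{x}^{\star},\mathbf{y}^{\star})$; hence $Q(\mathbf{y})=F(\mathbf{x}_{\mathbf{y}},\mathbf{y})\ge F(\mathbf{x}^{\star},\mathbf{y}^{\star})=Q(\mathbf{y}^{\star})$, which is exactly local minimality of $\mathbf{y}^{\star}$ for model \eqref{model:general}. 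The step I expect to be the crux is the continuity and localization of the proximity map via the threshold-avoidance hypothesis; once the proximity point is identified with the projection $\mathrm{P}_{\mathcal{B}_{N(\mathbf{x}^{\star})}}$, the remaining estimates are routine.
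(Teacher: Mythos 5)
Your proof is correct, and its mathematical core coincides with the paper's: the forward direction rests on exactly the same ingredients (Lemma \ref{P_BN_prox_L0} to get the dichotomy $|y_j^{\star}|>\sqrt{2\beta}$ on $N(\mathbf{x}^{\star})$ versus $|y_j^{\star}|<\sqrt{2\beta}$ off it, threshold avoidance to make it strict, persistence of the strict inequalities nearby, and the nonexpansiveness of $\mathrm{P}_{\mathcal{B}_{N(\mathbf{x}^{\star})}}$ to keep the companion point within the local-minimality radius). The only real difference is organizational: the paper runs both implications by contradiction --- the forward one via a sequence $\mathbf{y}^k\to\mathbf{y}^{\star}$ with $Q(\mathbf{y}^k)<Q(\mathbf{y}^{\star})$ and a three-way index partition, the converse via a case split on whether the offending $\widetilde{\mathbf{x}}$ lies in $\mathrm{prox}_{\beta\|\cdot\|_0}(\widetilde{\mathbf{y}})$ --- whereas you argue directly on a fixed neighborhood. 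Your converse is the more noticeable improvement: invoking the identity $Q(\mathbf{y})=\min_{\mathbf{x}}F(\mathbf{x},\mathbf{y})$ (immediate from \eqref{Eq_envelope}, \eqref{Def-Q} and \eqref{def:F}) collapses the paper's case analysis to one chain of inequalities and in fact yields the slightly stronger conclusion that $(\mathbf{x}^{\star},\mathbf{y}^{\star})$ minimizes $F$ over the entire cylinder $\{(\mathbf{x},\mathbf{y}):\|\mathbf{y}-\mathbf{y}^{\star}\|_2\le\delta\}$ with $\mathbf{x}$ unrestricted. No gaps.
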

\begin{proof}
If the pair $(\mathbf{x}^{\star},\mathbf{y}^{\star})$ is a local minimizer of
 of model~\eqref{model:two_variable} and
 $|y_j^{\star}|\neq\sqrt{2\beta}$ for all $j\in N(\mathbf{y}^{\star})$, we prove that $\mathbf{y}^{\star}$
is a local minimizer of model \eqref{model:general} by contradiction.
As $(\mathbf{x}^{\star},\mathbf{y}^{\star})$ is a local minimizer of model~\eqref{model:two_variable}, there exists a number $\delta>0$ such that
 \begin{eqnarray}
 F\left(\mathbf{x}^{\star},\mathbf{y}^{\star}\right)
\leq F\left(\mathbf{x}^{\star}+\triangle \mathbf{x},\mathbf{y}^{\star}+\triangle \mathbf{y}\right),
\ \ {\rm for \ all }\ \left\|\triangle \mathbf{x}\right\|\leq \delta, \ \ \left\|\triangle \mathbf{y}\right\|\leq \delta.
\label{inq_local_F_Q}
 \end{eqnarray}
Assume that $\mathbf{y}^{\star}$
is not a local minimizer of model \eqref{model:general}. Hence, there exists a sequence
$\{\mathbf{y}^k\}$ satisfying
\begin{eqnarray}
\left\|\mathbf{y}^k-\mathbf{y}^{\star}\right\|_2\rightarrow 0, \ as\ k\rightarrow +\infty,
 \ \ \ \ \ {\rm and} \quad\qquad
Q(\mathbf{y}^k)<Q(\mathbf{y}^{\star}).
\label{Q_local_m}
\end{eqnarray}
It follows that for any $ j\in N(\mathbf{y}^{\star})$,
\begin{eqnarray}\label{lim_y_star_neqbeta}
\lim\limits_{k\rightarrow \infty}|{y}_j^k|=|{y}_j^{\star}|\neq \sqrt{2\beta}.
\end{eqnarray}
For  any $ j\notin N(\mathbf{y}^{\star})$, as ${y}_j^{\star}=0$, we have that
\begin{eqnarray}\label{lim_y_star_0}
\lim\limits_{k\rightarrow \infty}|{y}_j^k|=0.
\end{eqnarray}

For convenient presentation, we introduce three index sets
$$
\mathcal{N}_1:=\{j: j\in N(\mathbf{y}^{\star}) \ \mbox{and} \ |{y}_j^{\star}|>\sqrt{2\beta}\},\
\mathcal{N}_2:=\{j: j\in N(\mathbf{y}^{\star}) \  \mbox{and} \ |{y}_j^{\star}|<\sqrt{2\beta}\},
\
\mathcal{N}_3:=\overline{N(\mathbf{y}^{\star})}.
$$
Under the hypothesis, we have $\{1,2,\ldots, N\}=\mathcal{N}_1\bigcup\mathcal{N}_2\bigcup\mathcal{N}_3$ and $\mathcal{N}_i\bigcap\mathcal{N}_j=\emptyset$ if $i\neq j$.
For any $j\in \mathcal{N}_1$, from \eqref{lim_y_star_neqbeta} there exists an integer $V_j>0$ such that for all $k\geq V_j$,
\begin{eqnarray}
|{y}_j^k|> \sqrt{2\beta}.
\label{y_j_bigger_2beta}
\end{eqnarray}
For  any $j\in \mathcal{N}_2$, from \eqref{lim_y_star_neqbeta} there exists an integer $V_j>0$ such that for all $k\geq V_j$,
\begin{eqnarray}\label{y_j_smaller_2beta1}
|{y}_j^k|<\sqrt{2\beta}.
\end{eqnarray}
For any $j\in \mathcal{N}_3$, from \eqref{lim_y_star_0} there exists an integer $V_j>0$ such that for all $k\geq V_j$,
\begin{eqnarray}\label{y_j_smaller_2beta2}
|{y}_j^k|<\sqrt{2\beta}.
\end{eqnarray}
Let $V:=\max\{V_j: j=1,2,\ldots,N\}$
and $\mathbf{x}^k\in\mathrm{prox}_{\beta\|\cdot\|_0} \left(\mathbf{y}^k\right)$.

We next prove that, for $k\geq V$
there holds that $N(\mathbf{x}^k)=N(\mathbf{x}^{\star})$.
If $j\in N(\mathbf{x}^{\star})$, as $\mathbf{x}^{\star}\in\mathrm{prox}_{\beta\|\cdot\|_0} \left(\mathbf{y}^{\star}\right)$, by Lemma \ref{P_BN_prox_L0} we have that $|{y}^{\star}_j|\geq\sqrt{2\beta}$. Since $\sqrt{2\beta}\neq|{y}_j^{\star}|$ for all $j\in N(\mathbf{y}^{\star})$,
we derive that for any $j\in N(\mathbf{x}^{\star})$, there holds $|{y}^{\star}_j|>\sqrt{2\beta}$. Hence, $j\in \mathcal{N}_1$. By  \eqref{y_j_bigger_2beta},  when $k\geq V$, there holds $|{y}_j^k|> \sqrt{2\beta}.$ According to \eqref{prox_envl0-1-component}, we have that ${x}_j^k={y}_j^k$, which verifies that $j\in N(\mathbf{x}^k)$. Therefore, $N(\mathbf{x}^{\star})\subseteq N(\mathbf{x}^k)$ for all $k\geq V$. Conversely, if $k\geq V$ and $j\in N(\mathbf{x}^k)$, applying Lemma \ref{P_BN_prox_L0} yields $|{y}_j^k|\geq\sqrt{2\beta}$. As $|{y}_j^k|\rightarrow|{y}_j^\star|$ when $k\rightarrow +\infty $, there holds $|{y}_j^\star|\geq\sqrt{2\beta}$.  Thus,  $j\in \mathcal{N}_1$.
Using \eqref{prox_envl0-1-component}, we have that ${x}_j^{\star}={y}_j^{\star}$,
which confirms that $j\in N(\mathbf{x}^{\star})$. Therefore, $N(\mathbf{x}^k)\subseteq N(\mathbf{x}^{\star})$ for all $k\geq V$.
Summarizing the two inclusions obtained above yields  that $N(\mathbf{x}^k)=N(\mathbf{x}^{\star})$ for $k\geq V$.

Set $\mathcal{N}:=N(\mathbf{x}^{\star})$. Notice that for $k\geq V$,
 $N(\mathbf{x}^k)=N(\mathbf{x}^{\star})$.
This together with Lemmas \ref{P_BN_L2} and \ref{P_BN_prox_L0} yields that for $k\geq V$
$$
\left\|\mathbf{x}^k-\mathbf{x}^{\star}\right\|_2
=\left\|{\rm P}_{\mathcal{B}_{\mathcal{N}}}(\mathbf{y}^k)
-{\rm P}_{\mathcal{B}_{\mathcal{N}}}(\mathbf{y}^{\star})\right\|_2
\leq\left\|\mathbf{y}^k-\mathbf{y}^{\star}\right\|_2.
$$
Since $\left\|\mathbf{y}^k-\mathbf{y}^{\star}\right\|_2\rightarrow 0$ as
$k\rightarrow +\infty$, there exists $V_1\geq V>0$ such that for $k\geq V_1$
$$
\left\|\mathbf{x}^k-\mathbf{x}^{\star}\right\|\leq \frac{\delta}{2}
\ \ \mbox{and}\ \
  \left\|\mathbf{y}^k-\mathbf{y}^{\star}\right\|\leq \frac{\delta}{2}.
$$
For $k\geq V_1$, by \eqref{Q_local_m} and \eqref{Q_F_relation},
the pair $(\mathbf{x}^k,\mathbf{y}^k)$ with $\mathbf{x}^k\in\mathrm{prox}_{\beta\|\cdot\|_0} \left(\mathbf{y}^k\right)$
satisfies that
$$
F(\mathbf{x}^k,\mathbf{y}^k)=Q(\mathbf{y}^k)
<Q(\mathbf{y}^{\star})=F(\mathbf{x}^{\star},\mathbf{y}^{\star}),
$$
which contradicts inequality \eqref{inq_local_F_Q}. Therefore, $\mathbf{y}^{\star}$
is  a local minimizer of model \eqref{model:general}.

The second part of this proposition can be proved in a way similar to the proof of Proposition \ref{pro:model_relation}. In deed, under the hypothesis, if $\mathbf{y}^{\star}$
is a local minimizer of model \eqref{model:general}, we prove the pair $(\mathbf{x}^{\star},\mathbf{y}^{\star})$ with $\mathbf{x}^{\star}\in\mathrm{prox}_{\beta\|\cdot\|_0} \left(\mathbf{y}^{\star}\right)$ is a local minimizer of model~\eqref{model:two_variable}   by contradiction.
 As $\mathbf{y}^{\star}$ is a local minimizer of model \eqref{model:general}, there exists a number $\delta_1>0$ such that
  \begin{eqnarray}
 Q\left(\mathbf{y}^{\star}\right)
\leq Q\left(\mathbf{y}^{\star}+\triangle \mathbf{y}\right),
\ \ {\rm for \ all }\  \ \ \left\|\triangle \mathbf{y}\right\|\leq \delta_1.
\label{inq_localmini_Q}
 \end{eqnarray}
 Assume to the contrary that $(\mathbf{x}^{\star},\mathbf{y}^{\star})$ is not a local minimizer of model \eqref{model:two_variable}. Then, there exists a pair $\left(\widetilde{\mathbf{x}},\widetilde{\mathbf{y}}\right)\in\mathbb{R}^{N}\times\mathbb{R}^{N}$ satisfying
\begin{eqnarray*}
F(\widetilde{\mathbf{x}},\widetilde{\mathbf{y}})
<F(\mathbf{x}^{\star},\mathbf{y}^{\star})=Q(\mathbf{y}^{\star}),
\ \ \ \left\|\widetilde{\mathbf{x}}-\mathbf{x}^{\star}\right\|\leq \frac{\delta_1}{2},
 \ \left\|\widetilde{\mathbf{y}}-\mathbf{y}^{\star}\right\|\leq \frac{\delta_1}{2}.
\end{eqnarray*}
If $\widetilde{\mathbf{x}}\in\mathrm{prox}_{\beta\|\cdot\|_0} \left(\widetilde{\mathbf{y}}\right)$, we have that
$
Q\left(\widetilde{\mathbf{y}}\right)=F(\widetilde{\mathbf{x}},\widetilde{\mathbf{y}}).
$
If $\widetilde{\mathbf{x}}\notin\mathrm{prox}_{\beta\|\cdot\|_0} \left(\widetilde{\mathbf{y}}\right)$, by the definition of $\mathrm{prox}_{\beta\|\cdot\|_0}$ and $\mathrm{env}_{\beta\|\cdot\|_0}$, we have that
$$
\mathrm{env}_{\beta\|\cdot\|_0}\left(\widetilde{\mathbf{y}}\right)<\frac{1}{2\beta}\|\widetilde{\mathbf{x}}-\widetilde{\mathbf{y}}\|_2^2
+\|\widetilde{\mathbf{x}}\|_0.
$$
Combining this with the definition \eqref{Def-Q} of $Q$  and that \eqref{def:F} of $F$ yields
$
Q(\widetilde{\mathbf{y}})<F\left(\widetilde{\mathbf{x}},\widetilde{\mathbf{y}}\right).
$
Therefore, in either case, we have that
$
Q(\widetilde{\mathbf{y}})<Q(\mathbf{y}^{\star})
$
and $\left\|\widetilde{\mathbf{y}}-\mathbf{y}^{\star}\right\|\leq \frac{\delta_1}{2}$,
which contradicts  \eqref{inq_localmini_Q}. Thus, $(\mathbf{x}^{\star},\mathbf{y}^{\star})$ must be a local minimizer of model \eqref{model:two_variable}.
\end{proof}


\section{A Fixed-Point Formulation and a Fixed-Point Iterative Algorithm }\label{sec:MAM_alro}\setcounter{equation}{0}


In this section, we describe a fixed-point formulation of problem \eqref{model:two_variable} and then propose an iterative algorithm for finding a local minimizer of the minimization problem \eqref{model:two_variable} based on the fixed-point formulation.


In the following proposition, we characterize  a  minimizer  of the convex model \eqref{model:convex_model} with a proper set $\mathcal{N}$.

\begin{proposition}\label{pro:G_minimizer_exist}
Suppose  $\beta, \gamma>0$.  If  $\mathcal{C}$ is a subset of $\{1,2,\ldots,N\}$, then  model \eqref{model:convex_model} with $\mathcal{N}:=\mathcal{C}$ has a solution and a pair $\left(\mathbf{x}^{\star},\mathbf{y}^{\star}\right)
 \in\mathbb{R}^{N}\times \mathbb{R}^{N}$ is a solution of model \eqref{model:convex_model} with $\mathcal{N}:=\mathcal{C}$ if and only if
\begin{eqnarray}
\mathbf{x}^{\star}={\rm P}_{\mathcal{B}_{\mathcal{N}}}(\mathbf{y}^{\star}),
\ \ \ \mathbf{y}^{\star}=\mathbf{x}^{\star}
-\frac{\beta}{\gamma}\mathbf{K}^*\left(\mathbf{K}\mathbf{y}^{\star}
-\mathbf{r}\right).
\label{eq:fixed-eqs_convex_model_solution}
\end{eqnarray}
\end{proposition}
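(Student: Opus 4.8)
The plan is to treat \eqref{model:convex_model} as the minimization of the differentiable convex function $G$ over the nonempty closed convex set $\mathcal{B}_{\mathcal{N}}\times\mathbb{R}^{N}$, establishing existence first and then characterizing the minimizers by first-order optimality. For existence, I would begin with the observation that $G\geq 0$, so the infimum is finite. Holding $\mathbf{y}$ fixed and minimizing the only $\mathbf{x}$-dependent term $\frac{\gamma}{2\beta}\|\mathbf{x}-\mathbf{y}\|_2^2$ over $\mathbf{x}\in\mathcal{B}_{\mathcal{N}}$, Lemma \ref{P_BN_L2} gives the unique minimizer $\mathbf{x}={\rm P}_{\mathcal{B}_{\mathcal{N}}}(\mathbf{y})$ and reduces \eqref{model:convex_model} to minimizing the nonnegative convex quadratic $g(\mathbf{y}):=\frac12\|\mathbf{K}\mathbf{y}-\mathbf{r}\|_2^2+\frac{\gamma}{2\beta}\|\mathbf{y}-{\rm P}_{\mathcal{B}_{\mathcal{N}}}(\mathbf{y})\|_2^2$ over $\mathbb{R}^{N}$. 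A convex quadratic that is bounded below always attains its infimum, since its linear term lies in the range of its positive semidefinite Hessian and hence a stationary point exists; this yields a minimizer $\mathbf{y}^{\star}$ of $g$, and then $({\rm P}_{\mathcal{B}_{\mathcal{N}}}(\mathbf{y}^{\star}),\mathbf{y}^{\star})$ solves \eqref{model:convex_model}.

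For the characterization, I would compute the partial gradients $\nabla_{\mathbf{x}}G(\mathbf{x},\mathbf{y})=\frac{\gamma}{\beta}(\mathbf{x}-\mathbf{y})$ and $\nabla_{\mathbf{y}}G(\mathbf{x},\mathbf{y})=\mathbf{K}^{*}(\mathbf{K}\mathbf{y}-\mathbf{r})+\frac{\gamma}{\beta}(\mathbf{y}-\mathbf{x})$, noting that the conjugate-symmetric row selection defining $\mathbf{K}$ makes $\mathbf{K}^{*}(\mathbf{K}\mathbf{y}-\mathbf{r})$ real for real $\mathbf{y},\mathbf{r}$, so these are the genuine gradients. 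Because the feasible set is the product of the subspace $\mathcal{B}_{\mathcal{N}}$ with the whole space $\mathbb{R}^{N}$, and $G$ is convex and differentiable, a pair $(\mathbf{x}^{\star},\mathbf{y}^{\star})$ is a global minimizer if and only if it satisfies the block-wise optimality conditions: $\mathbf{x}^{\star}$ minimizes $G(\cdot,\mathbf{y}^{\star})$ over $\mathcal{B}_{\mathcal{N}}$ and $\mathbf{y}^{\star}$ minimizes $G(\mathbf{x}^{\star},\cdot)$ over $\mathbb{R}^{N}$. The forward direction is immediate, and the reverse follows by summing the two block variational inequalities to recover the joint inequality $\langle\nabla G(\mathbf{x}^{\star},\mathbf{y}^{\star}),(\mathbf{x},\mathbf{y})-(\mathbf{x}^{\star},\mathbf{y}^{\star})\rangle\geq 0$ on $\mathcal{B}_{\mathcal{N}}\times\mathbb{R}^{N}$, which is equivalent to global minimality by convexity.

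It then remains to translate the two block conditions into \eqref{eq:fixed-eqs_convex_model_solution}. The $\mathbf{x}$-block reduces to the projection problem $\min_{\mathbf{x}\in\mathcal{B}_{\mathcal{N}}}\frac{\gamma}{2\beta}\|\mathbf{x}-\mathbf{y}^{\star}\|_2^2$, whose unique solution is $\mathbf{x}^{\star}={\rm P}_{\mathcal{B}_{\mathcal{N}}}(\mathbf{y}^{\star})$ by Lemma \ref{P_BN_L2}, giving the first equation. The $\mathbf{y}$-block is unconstrained, so its optimality is $\nabla_{\mathbf{y}}G(\mathbf{x}^{\star},\mathbf{y}^{\star})=\mathbf{0}$, that is $\mathbf{K}^{*}(\mathbf{K}\mathbf{y}^{\star}-\mathbf{r})+\frac{\gamma}{\beta}(\mathbf{y}^{\star}-\mathbf{x}^{\star})=\mathbf{0}$; rearranging yields $\mathbf{y}^{\star}=\mathbf{x}^{\star}-\frac{\beta}{\gamma}\mathbf{K}^{*}(\mathbf{K}\mathbf{y}^{\star}-\mathbf{r})$, the second equation. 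Conversely, these two equations are precisely the block conditions, so the equivalence is complete.

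The step I expect to be the main obstacle is existence, because $G$ is not coercive: it is constant along the lineality directions $\mathcal{B}_{\mathcal{N}}\cap\ker\mathbf{K}$, so attainment of the infimum cannot be obtained from a routine coercivity-plus-compactness argument. The cleanest remedy is the reduction to the nonnegative convex quadratic $g$ above together with the fact that a convex quadratic bounded below always possesses a minimizer, equivalently the Frank--Wolfe theorem applied to the polyhedral feasible set. Everything else is standard convex calculus, with the only extra care being the verification that $\mathbf{K}^{*}(\mathbf{K}\mathbf{y}-\mathbf{r})$ is real-valued, which guarantees that the stated gradient is correct.
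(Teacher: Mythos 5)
Your proof is correct, and its characterization half follows essentially the same route as the paper: both compute the same partial gradients of $G$, invoke Lemma \ref{P_BN_L2} to resolve the $\mathbf{x}$-block as the projection ${\rm P}_{\mathcal{B}_{\mathcal{N}}}(\mathbf{y}^{\star})$, and read the second equation of \eqref{eq:fixed-eqs_convex_model_solution} off the unconstrained $\mathbf{y}$-optimality; the paper phrases this through the joint variational inequality with cleverly chosen test points rather than your block-wise conditions, but that difference is cosmetic. Where you genuinely diverge is the existence argument, and your version is the stronger one. The paper performs the same reduction to minimizing $g(\mathbf{y})=\frac12\|\mathbf{K}\mathbf{y}-\mathbf{r}\|_2^2+\frac{\gamma}{2\beta}\|\mathbf{y}-{\rm P}_{\mathcal{B}_{\mathcal{N}}}(\mathbf{y})\|_2^2$ (after splitting off the case $\overline{\mathcal{N}}=\emptyset$), but then asserts that $g$ is coercive whenever $\overline{\mathcal{N}}\neq\emptyset$. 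As you anticipate, that assertion fails in general: along any nonzero direction in $\mathcal{B}_{\mathcal{N}}\cap\ker\mathbf{K}$ the objective $g$ stays bounded, and this intersection is nontrivial as soon as $|\mathcal{N}|>d$ because $\ker\mathbf{K}$ has dimension at least $N-d$. Your appeal to the fact that a nonnegative (hence bounded-below) convex quadratic attains its infimum — equivalently, the Frank--Wolfe theorem — is exactly the right repair and yields existence with no condition on $\mathcal{N}$, so your argument actually closes a gap in the paper's own proof. The one point to make explicit in a final write-up is the realness of $\mathbf{K}^{*}(\mathbf{K}\mathbf{y}-\mathbf{r})$, which the paper obtains from the realness of $\mathbf{K}^{*}\mathbf{K}$ in Lemma \ref{lemma:K*K_eigvalue} together with the conjugate-paired row selection; you flag this, which suffices.
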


\begin{proof}
In the case when $\overline{\mathcal{N}}= \emptyset$, by the definition \eqref{P_B} of ${\rm P}_{\mathcal{B}_{\mathcal{N}}}$, we get that ${\rm P}_{\mathcal{B}_{\mathcal{N}}}\left(\mathbf{y}\right)=\mathbf{y}$ for all $\mathbf{y}\in\mathbb{R}^{N}$. Hence, model \eqref{model:convex_model}
reduces to be a convex model on $\mathbb{R}^{N}\times\mathbb{R}^{N}$. Applying the Fermat rule yields that $\left(\tilde{\mathbf{x}}, \tilde{\mathbf{y}}\right)$ is a solution of model \eqref{model:convex_model} if and only if $\left(\tilde{\mathbf{x}},\tilde{\mathbf{y}}\right)$ satisfies equation  \eqref{eq:fixed-eqs_convex_model_solution}.

We now consider the case when $\overline{\mathcal{N}}\neq \emptyset$. We first prove the existence of a minimizer of model \eqref{model:convex_model} with $\mathcal{N}$. To this end, noticing that
$ {\rm P}_{\mathcal{B}_{\mathcal{N}}}\left(\mathbf{x}\right)=\mathbf{x}$ for any $\mathbf{x}\in \mathcal{B}_{\mathcal{N}}$ and applying formula \eqref{eq:orthogonal_PN_xy},  we  rewrite model \eqref{model:convex_model} as
\begin{eqnarray}
{\rm argmin}\left\{
\frac{1}{2}\|\mathbf{Ky}-\mathbf{r}\|_2^2
+\frac{\gamma}{2\beta}\|\mathbf{y}-
 {\rm P}_{\mathcal{B}_{\mathcal{N}}}\left(\mathbf{y}\right)\|_2^2
+\frac{\gamma}{2\beta}\|\mathbf{x}-
 {\rm P}_{\mathcal{B}_{\mathcal{N}}}\left(\mathbf{y}\right)\|_2^2,\ \ (\mathbf{x},\mathbf{y})\in \mathcal{B}_{\mathcal{N}}\times\mathbb{R}^{N}\right\}.
\label{model:convex_model_eqv}
\end{eqnarray}
The model above can be solved by two steps. Firstly, since $\overline{\mathcal{N}}\neq \emptyset$, we have that
\begin{eqnarray*}
\lim\limits_{\|\mathbf{y}\|_2\rightarrow\infty}\left(\frac{1}{2}\left\|\mathbf{Ky}-\mathbf{r}\right\|_2^2
+\frac{\gamma}{2\beta}\left\|\mathbf{y}-
 {\rm P}_{\mathcal{B}_{\mathcal{N}}}\left(\mathbf{y}\right)\right\|_2^2\right)=+\infty.
\end{eqnarray*}
Hence, the objective function of the following model
\begin{eqnarray}
\min\left\{\frac{1}{2}\|\mathbf{Ky}-\mathbf{r}\|_2^2
+\frac{\gamma}{2\beta}\|\mathbf{y}-
 {\rm P}_{\mathcal{B}_{\mathcal{N}}}\left(\mathbf{y}\right)\|_2^2,~\ \ \ \mathbf{y}\in\mathbb{R}^{N}\right\}
\label{model_overlineN}
\end{eqnarray}
is coercive. Therefore, model \eqref{model_overlineN} has a solution $\tilde{\mathbf{y}}$.
Secondly, set $\tilde{\mathbf{x}}~:={\rm P}_{\mathcal{B}_{\mathcal{N}}}(\tilde{\mathbf{y}})$. By the definition \eqref{P_B} of ${\rm P}_{\mathcal{B}_{\mathcal{N}}}$, we have that $\tilde{\mathbf{x}}\in\mathcal{B}_{\mathcal{N}}$. By the expression of model \eqref{model:convex_model_eqv} and the discussion above, we derive that $\left(\tilde{\mathbf{x}},\tilde{\mathbf{y}}\right)$ is a solution of model \eqref{model:convex_model_eqv}. Thus, model \eqref{model:convex_model}  with $\mathcal{N}:=\mathcal{C}$ has a solution.

Note that model \eqref{model:convex_model} with $\mathcal{N}:=\mathcal{C}$ is a convex minimization problem with a differentiable objective function. Applying the Fermat rule yields that $\left(\mathbf{x}^{\star},\mathbf{y}^{\star}\right)$ is a solution of \eqref{model:convex_model}  with $\mathcal{N}:=\mathcal{C}$ if and only if
\begin{eqnarray}
G_{\mathbf{x}}(\mathbf{x}^{\star},\mathbf{y}^{\star})^T\left(\mathbf{x}-\mathbf{x}^{\star}\right)\geq\mathbf{0},\ \  \mbox{for all}\ \ \mathbf{x}\in\mathcal{B}_{\mathcal{N}},
\ \
G_{\mathbf{y}}(\mathbf{x}^{\star},\mathbf{y}^{\star})^T\left(\mathbf{y}-\mathbf{y}^{\star}\right)\geq\mathbf{0}, \ \  \mbox{for all}\ \ \mathbf{y}\in\mathbb{R}^{N},
\label{eq:Gx_G_y_N}
\end{eqnarray}
where $G_{\mathbf{x}}$ and $G_{\mathbf{y}}$ denote the derivative of $G$ with respect to $\mathbf{x}$ and $\mathbf{y}$, respectively. By Lemma \ref{P_BN_L2},  ${\rm P}_{\mathcal{B}_{\mathcal{N}}}$ is an orthogonal projection. Hence, for a pair $\left(\mathbf{x},\mathbf{y}\right) \in\mathcal{B}_{\mathcal{N}}\times \mathbb{R}^{N}$,
equation \eqref{eq:orthogonal_PN_xy} holds. By differentiation, we obtain that
\begin{eqnarray}
&&G_{\mathbf{x}}(\mathbf{x}^{\star},\mathbf{y}^{\star})=\frac{\gamma}{\beta}
\left(\mathbf{x}^{\star}-{\rm P}_{\mathcal{B}_{\mathcal{N}}}(\mathbf{y}^{\star})\right),
\label{eqn:G_x_N}
\\
&&G_{\mathbf{y}}(\mathbf{x}^{\star},\mathbf{y}^{\star})=\mathbf{K}^*\left(\mathbf{K}\mathbf{y}^{\star}-\mathbf{r}\right)
 +\frac{\gamma}{\beta}
 \left(\mathbf{y}^{\star}-\mathbf{x}^{\star}\right).
\label{eqn:G_y_N}
\end{eqnarray}
Substituting \eqref{eqn:G_x_N} into the first inequality of \eqref{eq:Gx_G_y_N} and letting
$\mathbf{x}:={\rm P}_{\mathcal{B}_{\mathcal{N}}}(\mathbf{y}^{\star})$ yield the first equation of \eqref{eq:fixed-eqs_convex_model_solution}. Substituting \eqref{eqn:G_y_N} into the second inequality of \eqref{eq:Gx_G_y_N} and choosing $\mathbf{y}:=-\mathbf{K}^*\left(\mathbf{K}\mathbf{y}^{\star}-\mathbf{r}\right)
 -\frac{\gamma}{\beta}\left(\mathbf{y}^{\star}-\mathbf{x}^{\star}\right)+
 \mathbf{y}^{\star}$ we obtain the second equation of \eqref{eq:fixed-eqs_convex_model_solution}.
 \end{proof}


Direct application of Proposition \ref{pro:G_minimizer_exist} to the case with $\mathcal{N}:=N(\mathbf{x}^{\star})$ leads to the next result.

\begin{proposition}\label{pro:F_local_minimizer}
 Let  $\beta, \gamma>0$, and $\left(\mathbf{x}^{\star},\mathbf{y}^{\star}\right)
 \in\mathbb{R}^{N}\times \mathbb{R}^{N}$ be given.  Then, the pair
 $\left(\mathbf{x}^{\star},\mathbf{y}^{\star}\right)$ is a solution of model \eqref{model:convex_model} with $\mathcal{N}:=N(\mathbf{x}^{\star})$ if and only if
\begin{eqnarray}
\mathbf{x}^{\star}={\rm P}_{\mathcal{B}_{{N}(\mathbf{x}^{\star})}}(\mathbf{y}^{\star}),
\ \ \ \mathbf{y}^{\star}=\mathbf{x}^{\star}
-\frac{\beta}{\gamma}\mathbf{K}^*\left(\mathbf{K}\mathbf{y}^{\star}
-\mathbf{r}\right).
\label{eq:fixed-eqs_two_var_iter_Bb}
\end{eqnarray}

\end{proposition}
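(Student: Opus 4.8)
The plan is to obtain this statement as a direct specialization of Proposition \ref{pro:G_minimizer_exist}, choosing the index set $\mathcal{C}$ there to be the support of the first component of the given pair. The only preliminary I would record is that, for the fixed pair $(\mathbf{x}^{\star},\mathbf{y}^{\star})\in\mathbb{R}^{N}\times\mathbb{R}^{N}$, the support $N(\mathbf{x}^{\star})=\{i:x_i^{\star}\neq 0\}$ is a perfectly well-defined subset of $\{1,2,\ldots,N\}$. Hence it is a legitimate candidate for the set $\mathcal{C}$ appearing in the hypothesis of Proposition \ref{pro:G_minimizer_exist}, and no extra admissibility condition needs to be checked.

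With this in hand I would set $\mathcal{C}:=N(\mathbf{x}^{\star})$ and simply invoke Proposition \ref{pro:G_minimizer_exist}. That proposition asserts that model \eqref{model:convex_model} with $\mathcal{N}:=\mathcal{C}$ possesses a solution, and that the fixed pair $(\mathbf{x}^{\star},\mathbf{y}^{\star})$ is a solution if and only if the system \eqref{eq:fixed-eqs_convex_model_solution} holds. Substituting $\mathcal{N}=N(\mathbf{x}^{\star})$ into \eqref{eq:fixed-eqs_convex_model_solution}, the first relation becomes $\mathbf{x}^{\star}={\rm P}_{\mathcal{B}_{N(\mathbf{x}^{\star})}}(\mathbf{y}^{\star})$ while the second is left verbatim, so \eqref{eq:fixed-eqs_convex_model_solution} is turned precisely into the system \eqref{eq:fixed-eqs_two_var_iter_Bb} to be established. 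This single substitution delivers both directions of the equivalence at once, and no coercivity or existence estimate is required here because those were already secured inside Proposition \ref{pro:G_minimizer_exist}.

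The one point I would be careful about, and which I regard as the only genuine obstacle here, is conceptual rather than computational: the apparent circularity of letting the index set $\mathcal{N}$ depend on the very vector $\mathbf{x}^{\star}$ being characterized. I would argue explicitly that there is no circularity, because throughout the argument the pair $(\mathbf{x}^{\star},\mathbf{y}^{\star})$ is held fixed, so $N(\mathbf{x}^{\star})$ is a constant index set at the moment Proposition \ref{pro:G_minimizer_exist} is applied. Since that proposition is valid for \emph{any} fixed subset $\mathcal{C}\subseteq\{1,2,\ldots,N\}$, instantiating $\mathcal{C}:=N(\mathbf{x}^{\star})$ is permissible without further hypotheses. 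I would close by noting that the matching of supports is exactly what makes the specialization work: the projection operator in \eqref{eq:fixed-eqs_two_var_iter_Bb} is taken onto $\mathcal{B}_{N(\mathbf{x}^{\star})}$, which is identical to the $\mathcal{B}_{\mathcal{N}}$ used to define the convex model \eqref{model:convex_model} with $\mathcal{N}=N(\mathbf{x}^{\star})$, so the two characterizations are literally the same relations read for the same index set.
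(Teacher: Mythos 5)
Your proposal is correct and matches the paper exactly: the paper also obtains this proposition as a ``direct application of Proposition \ref{pro:G_minimizer_exist} to the case with $\mathcal{N}:=N(\mathbf{x}^{\star})$,'' offering no further argument. Your additional remark dispelling the apparent circularity (the pair is fixed, so $N(\mathbf{x}^{\star})$ is a legitimate fixed choice of $\mathcal{C}$) is a sensible clarification, though the paper does not spell it out.
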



Combining Theorem \ref{lemma:relation_nonconv_convex} and Proposition \ref{pro:F_local_minimizer} yields the following characterization of a local minimizer of the non-convex model \eqref{model:two_variable}.

\begin{theorem}\label{lemma:relation_nonconv_convex_char}
Let  $\beta, \gamma>0$ be fixed. A pair $\left(\mathbf{x}^{\star},\mathbf{y}^{\star}\right)
\in \mathbb{R}^{N}\times \mathbb{R}^{N}$ is a local minimizer of the non-convex minimization problem \eqref{model:two_variable}
if and only if $(\mathbf{x}^{\star},\mathbf{y}^{\star})$ satisfies equations \eqref{eq:fixed-eqs_two_var_iter_Bb}.
\end{theorem}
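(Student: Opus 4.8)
The plan is to obtain the desired characterization simply by chaining the two equivalences already established in the preceding results, since each of them is stated as an ``if and only if'' and, crucially, both are phrased with respect to the \emph{same} index set $\mathcal{N} := N(\mathbf{x}^{\star})$. First I would invoke Theorem \ref{lemma:relation_nonconv_convex}, which asserts that $(\mathbf{x}^{\star},\mathbf{y}^{\star})$ is a local minimizer of the non-convex problem \eqref{model:two_variable} if and only if it is a minimizer of the convex problem \eqref{model:convex_model} with $\mathcal{N} := N(\mathbf{x}^{\star})$. This step transfers the local optimality condition for the non-convex problem into a global optimality condition for a convex surrogate whose feasible set $\mathcal{B}_{N(\mathbf{x}^{\star})}$ is adapted to the support of $\mathbf{x}^{\star}$.

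Next I would apply Proposition \ref{pro:F_local_minimizer}, which is precisely the specialization of Proposition \ref{pro:G_minimizer_exist} to $\mathcal{N} := N(\mathbf{x}^{\star})$, in order to characterize the minimizers of that convex problem. It states that $(\mathbf{x}^{\star},\mathbf{y}^{\star})$ solves \eqref{model:convex_model} with $\mathcal{N} := N(\mathbf{x}^{\star})$ if and only if the fixed-point equations \eqref{eq:fixed-eqs_two_var_iter_Bb} hold. Composing the two ``if and only if'' statements then yields the theorem at once: being a local minimizer of \eqref{model:two_variable} is equivalent to being a minimizer of the convex model with $\mathcal{N} := N(\mathbf{x}^{\star})$, which in turn is equivalent to satisfying \eqref{eq:fixed-eqs_two_var_iter_Bb}.

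Since both ingredients are already proven, there is essentially no substantive obstacle remaining; the only point deserving explicit verification is that the index set used in both results is identical, namely $\mathcal{N} = N(\mathbf{x}^{\star})$. This consistency is automatic, because the support $N(\mathbf{x}^{\star})$ is determined by $\mathbf{x}^{\star}$ alone, so the same $\mathcal{N}$ feeds both the convexity reduction of Theorem \ref{lemma:relation_nonconv_convex} and the Fermat-rule characterization of Proposition \ref{pro:F_local_minimizer}. I would therefore present the proof as a one-line composition of these two equivalences, remarking that it closes the loop by expressing local optimality of the original non-convex two-variable model entirely in terms of the fixed-point system \eqref{eq:fixed-eqs_two_var_iter_Bb}, which is exactly the form needed for the fixed-point iterative algorithm developed in the sequel.
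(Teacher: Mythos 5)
Your proposal is correct and follows exactly the paper's own argument: the theorem is stated there as an immediate consequence of combining Theorem \ref{lemma:relation_nonconv_convex} with Proposition \ref{pro:F_local_minimizer}, both applied with the same index set $\mathcal{N}:=N(\mathbf{x}^{\star})$. Your additional remark that the consistency of the index set is the only point needing verification is a reasonable observation, but no further work is required.
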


We next formulate a necessary condition for a global minimizer of the non-convex minimization problem \eqref{model:two_variable} as a fixed-point of a nonlinear map, and show that a fixed-point of the map is sufficiently a local minimizer of \eqref{model:two_variable}.

\begin{theorem}\label{solution_twovar_model}
Let $\beta, \gamma>0$ be fixed.
If a pair $(\mathbf{x}^{\star},\mathbf{y}^{\star})$ is a
solution of the minimization problem \eqref{model:two_variable}, then $(\mathbf{x}^{\star},\mathbf{y}^{\star})$ satisfies the fixed-point equation
\begin{eqnarray}
\mathbf{x}^{\star}\in\mathrm{prox}_{\beta\|\cdot\|_0} \left(\mathbf{y}^{\star}\right),
\qquad
\mathbf{y}^{\star}=\mathbf{x}^{\star}
-\frac{\beta}{\gamma}\mathbf{K}^*(\mathbf{K}\mathbf{y}^{\star}-\mathbf{r}).
\label{solution_twovar_model_fixedeq}
\end{eqnarray}
Conversely, if a pair $(\mathbf{x}^{\star},\mathbf{y}^{\star})$ satisfies \eqref{solution_twovar_model_fixedeq}, then $(\mathbf{x}^{\star},\mathbf{y}^{\star})$
is a local minimizer of \eqref{model:two_variable}.
\end{theorem}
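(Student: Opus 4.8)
The plan is to assemble the statement from the equivalences already established, treating the two implications separately and paying attention to the gap between the two flavors of fixed-point equation: the conclusion \eqref{solution_twovar_model_fixedeq} carries the proximity \emph{inclusion} $\mathbf{x}^{\star}\in\mathrm{prox}_{\beta\|\cdot\|_0}(\mathbf{y}^{\star})$, whereas the local-minimizer characterization \eqref{eq:fixed-eqs_two_var_iter_Bb} of Theorem \ref{lemma:relation_nonconv_convex_char} carries only the weaker projection equation $\mathbf{x}^{\star}={\rm P}_{\mathcal{B}_{N(\mathbf{x}^{\star})}}(\mathbf{y}^{\star})$. The second (linear) equation for $\mathbf{y}^{\star}$ is literally identical in \eqref{solution_twovar_model_fixedeq} and \eqref{eq:fixed-eqs_two_var_iter_Bb}, so the whole argument reduces to tracking the first component.

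For the necessary direction, suppose $(\mathbf{x}^{\star},\mathbf{y}^{\star})$ is a global solution of \eqref{model:two_variable}. First I would invoke Proposition \ref{pro:model_relation}, which identifies such a solution with a solution $\mathbf{y}^{\star}$ of \eqref{model:general} together with the inclusion $\mathbf{x}^{\star}\in\mathrm{prox}_{\beta\|\cdot\|_0}(\mathbf{y}^{\star})$; this delivers the first relation of \eqref{solution_twovar_model_fixedeq} at once. For the second relation, I would note that a global minimizer is in particular a local minimizer, so Theorem \ref{lemma:relation_nonconv_convex_char} applies and yields \eqref{eq:fixed-eqs_two_var_iter_Bb}, whose second equation is precisely $\mathbf{y}^{\star}=\mathbf{x}^{\star}-\frac{\beta}{\gamma}\mathbf{K}^{*}(\mathbf{K}\mathbf{y}^{\star}-\mathbf{r})$. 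Combining the two gives \eqref{solution_twovar_model_fixedeq}. Alternatively, the $\mathbf{y}^{\star}$ equation follows directly from the Fermat rule applied to the differentiable convex function $\mathbf{y}\mapsto F(\mathbf{x}^{\star},\mathbf{y})$, whose gradient is the expression \eqref{eqn:G_y_N} for $G_{\mathbf{y}}$ (the $\ell_0$ term being constant in $\mathbf{y}$).

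For the sufficient direction, suppose $(\mathbf{x}^{\star},\mathbf{y}^{\star})$ satisfies \eqref{solution_twovar_model_fixedeq}. The strategy is to reduce this to \eqref{eq:fixed-eqs_two_var_iter_Bb} and then quote Theorem \ref{lemma:relation_nonconv_convex_char}. The second equation of \eqref{eq:fixed-eqs_two_var_iter_Bb} is identical to that of \eqref{solution_twovar_model_fixedeq}, so nothing is needed there. For the first, I would apply Item (iii) of Lemma \ref{P_BN_prox_L0}: the inclusion $\mathbf{x}^{\star}\in\mathrm{prox}_{\beta\|\cdot\|_0}(\mathbf{y}^{\star})$ forces $\mathbf{x}^{\star}={\rm P}_{\mathcal{B}_{N(\mathbf{x}^{\star})}}(\mathbf{y}^{\star})$. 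Hence \eqref{eq:fixed-eqs_two_var_iter_Bb} holds with $\mathcal{N}:=N(\mathbf{x}^{\star})$, and Theorem \ref{lemma:relation_nonconv_convex_char} certifies that $(\mathbf{x}^{\star},\mathbf{y}^{\star})$ is a local minimizer of \eqref{model:two_variable}.

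The step requiring the most care is conceptual rather than computational: understanding why the converse yields only local, not global, optimality, and why the forward implication cannot be run through Theorem \ref{lemma:relation_nonconv_convex_char} alone. The prox inclusion is strictly stronger than the projection equation — by Lemma \ref{P_BN_prox_L0} it additionally encodes the threshold conditions $|y_j^{\star}|\geq\sqrt{2\beta}$ on the support of $\mathbf{x}^{\star}$ and $|y_j^{\star}|\leq\sqrt{2\beta}$ off it — so a fixed point of the prox map satisfies the stronger of the two equations, yet it is only pinned to the convex subproblem \eqref{model:convex_model} on $\mathcal{B}_{N(\mathbf{x}^{\star})}$, which detects local but not global minimality. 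This asymmetry is exactly why the necessary implication must draw on the global structure furnished by Proposition \ref{pro:model_relation} to recover the inclusion, while the sufficient implication can get away with the local characterization after discarding the threshold information via Lemma \ref{P_BN_prox_L0}(iii).
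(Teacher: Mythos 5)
Your proposal is correct and follows essentially the same route as the paper's proof: the forward direction combines Proposition \ref{pro:model_relation} (for the proximity inclusion) with Theorem \ref{lemma:relation_nonconv_convex_char} applied to the global minimizer viewed as a local one (for the linear equation), and the converse uses Item (iii) of Lemma \ref{P_BN_prox_L0} to pass from the prox inclusion to the projection equation and then invokes Theorem \ref{lemma:relation_nonconv_convex_char}. Your closing remarks on the asymmetry between the prox inclusion and the projection equation are accurate supplementary commentary but do not alter the argument.
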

\begin{proof}
Suppose that a pair $(\mathbf{x}^{\star},\mathbf{y}^{\star})$ is a solution of the minimization problem  \eqref{model:two_variable}.
The first inclusion of \eqref{solution_twovar_model_fixedeq} has been proved in Proposition \ref{pro:model_relation}. It remains to show the second equation of \eqref{solution_twovar_model_fixedeq}.
Clearly, $(\mathbf{x}^{\star},\mathbf{y}^{\star})$ is a local minimizer of the non-convex minimization problem \eqref{model:two_variable}.  By  Theorem \ref{lemma:relation_nonconv_convex_char}, the second equation of \eqref{solution_twovar_model_fixedeq} holds.

We next prove the second part of this theorem.
By Item (iii) of Lemma \ref{P_BN_prox_L0}, we have that $\mathbf{x}^{\star}=
{\rm P}_{\mathcal{B}_{N(\mathbf{x}^{\star})}}(\mathbf{y}^{\star})$.
This together with the second equation of \eqref{solution_twovar_model_fixedeq} yields that $(\mathbf{x}^{\star},\mathbf{y}^{\star})$
is a local minimizer of model~\eqref{model:two_variable},  by employing Theorem \ref{lemma:relation_nonconv_convex_char}.
\end{proof}


Theorem \ref{solution_twovar_model} motivates us to propose a fixed-point algorithm for solving the minimization problem \eqref{model:two_variable}.
Based on the fixed-point equations \eqref{solution_twovar_model_fixedeq} of Theorem \ref{solution_twovar_model}, we propose the iteration algorithm as
\begin{eqnarray}
\left\{
\begin{array}{l}
\mathbf{x}^{k+1}\in\mathrm{prox}_{\beta\|\cdot\|_0}
\left(\mathbf{y}^{k}\right),
\\
\mathbf{y}^{k+1}=\mathbf{x}^{k+1}
-\frac{\beta}{\gamma}\mathbf{K}^{*}(\mathbf{K}\mathbf{y}^{k+1}-\mathbf{r}).\ \ \
\end{array}
\right.\label{eq:fixed-eqs_two_var_iter}
\end{eqnarray}
Updates of both variables $\mathbf{x}$ and $\mathbf{y}$ in Algorithm \eqref{eq:fixed-eqs_two_var_iter}
at each iteration can be efficiently implemented. The first subproblem in \eqref{eq:fixed-eqs_two_var_iter} can be explicitly solved by employing the closed-form formula \eqref{prox_envl0-1} of the proximity operator of the $\ell_0$ norm. Once the value $\mathbf{x}^{k+1}$ is obtained, we can solve the linear system of \eqref{eq:fixed-eqs_two_var_iter} for  $\mathbf{y}^{k+1}$.

The unique solvability of the linear system of \eqref{eq:fixed-eqs_two_var_iter} requires further consideration. To this end, we first exam the eigenvalues of matrix $\mathbf{K}^*\mathbf{K}$.
By the definition of $\mathbf{R}$ and $\mathbf{F}$, and the property of the tight framelet $\mathbf{W}$ that $\mathbf{W}^*\mathbf{W}=\mathbf{I}$ \cite{Cai-Chan-Shen-Shen}, it can be verified that
$
\mathbf{KK}^*=\mathbf{I}.
$
This property of $\mathbf{K}$ leads to the following results.

\begin{lemma}\label{lemma:K*K_eigvalue}
If $d$, $M$ and $N$ are positive integers with $d\leq M\leq N$,  $\mathbf{W}$ is an $N\times M$ real matrix satisfying $\mathbf{W}^*\mathbf{W}=\mathbf{I}$, $\mathbf{R}$ is a $d \times M$ row selection matrix,
then the matrix $\mathbf{K}^*\mathbf{K}$ defined by the second equation of \eqref{def:y_K} is a real matrix and the eigenvalues $\mu_j$ of  $\mathbf{K}^*\mathbf{K}$  are
\begin{eqnarray}
\mu_1=\mu_2=\ldots=\mu_{d}=1,
\ \ \
\mu_{d+1}=\mu_{d+2}=\ldots=\mu_{N-d}=0.
\label{eq:K*K_eigvalue}
\end{eqnarray}
\end{lemma}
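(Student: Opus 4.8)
The plan is to exploit the identity $\mathbf{K}\mathbf{K}^* = \mathbf{I}_d$ established just before the statement, which I will show forces $\mathbf{K}^*\mathbf{K}$ to be an orthogonal projection; its eigenvalues then lie in $\{0,1\}$ and a trace count fixes their multiplicities. The reality of $\mathbf{K}^*\mathbf{K}$ is a separate matter that I will settle using the conjugate-symmetric construction of the row selector $\mathbf{R}$.

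First I would record the spectral structure. Put $\mathbf{P} := \mathbf{K}^*\mathbf{K}$, an $N \times N$ matrix, which is Hermitian since $(\mathbf{K}^*\mathbf{K})^* = \mathbf{K}^*\mathbf{K}$. Using $\mathbf{K}\mathbf{K}^* = \mathbf{I}_d$ a one-line computation gives
\[
\mathbf{P}^2 = \mathbf{K}^*(\mathbf{K}\mathbf{K}^*)\mathbf{K} = \mathbf{K}^*\mathbf{K} = \mathbf{P},
\]
so $\mathbf{P}$ is idempotent and hence an orthogonal projection; consequently every eigenvalue of $\mathbf{P}$ equals $0$ or $1$. To count the eigenvalue $1$, I would use that the rank of an orthogonal projection equals its trace together with the cyclicity of the trace,
\[
\operatorname{tr}\mathbf{P} = \operatorname{tr}(\mathbf{K}^*\mathbf{K}) = \operatorname{tr}(\mathbf{K}\mathbf{K}^*) = \operatorname{tr}\mathbf{I}_d = d .
\]
Thus exactly $d$ of the eigenvalues equal $1$ and the remaining $N-d$ equal $0$, which is the asserted list.

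Next I would prove that $\mathbf{P}$ is real. Writing $\mathbf{P} = \mathbf{W}\bigl(\mathbf{F}^*\mathbf{R}^*\mathbf{R}\mathbf{F}\bigr)\mathbf{W}^*$ and recalling that $\mathbf{W}$ is real, it suffices to show the $M\times M$ middle factor $\mathbf{F}^*\mathbf{R}^*\mathbf{R}\mathbf{F}$ is real. Denoting by $S$ the set of row indices selected by $\mathbf{R}$, its $(j,k)$ entry is $\tfrac{1}{M}\sum_{m\in S}\exp\bigl(i\tfrac{2\pi(m-1)(j-k)}{M}\bigr)$. Here the symmetric selection rule enters decisively: whenever $m\in S$ with $m\geq 2$ one also has $M-m+2\in S$, and since $M-m+1\equiv-(m-1)\pmod{M}$ the two corresponding summands are complex conjugates of one another, so each such pair contributes $\tfrac{2}{M}\cos\bigl(\tfrac{2\pi(m-1)(j-k)}{M}\bigr)$. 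The unpaired indices, namely $m=1$ and, when $M$ is even, the fixed point $m=M/2+1$, contribute real terms as well. Hence $\mathbf{F}^*\mathbf{R}^*\mathbf{R}\mathbf{F}$, and therefore $\mathbf{P}=\mathbf{K}^*\mathbf{K}$, is real.

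I expect the reality claim to be the main obstacle, since it is the only step that genuinely uses the conjugate-symmetric construction of $\mathbf{R}$ and requires the careful bookkeeping of conjugate pairs together with the fixed-point indices $m=1$ and $m=M/2+1$. By contrast, once $\mathbf{K}\mathbf{K}^*=\mathbf{I}_d$ and the projection identity are in hand, the eigenvalue count is immediate.
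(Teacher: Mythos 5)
Your proof is correct and follows essentially the same route as the paper's: idempotency of $\mathbf{K}^*\mathbf{K}$ from $\mathbf{K}\mathbf{K}^*=\mathbf{I}$ forces the eigenvalues into $\{0,1\}$, and reality comes from the conjugate-symmetric row selection together with the realness of $\mathbf{W}$. The only cosmetic difference is that you count the unit eigenvalues via the trace identity $\operatorname{tr}(\mathbf{K}^*\mathbf{K})=\operatorname{tr}(\mathbf{K}\mathbf{K}^*)=d$ where the paper invokes $r(\mathbf{K}^*\mathbf{K})=d$; you also supply the explicit entrywise computation of $\mathbf{F}^*\mathbf{R}^*\mathbf{R}\mathbf{F}$ (including the unpaired indices $m=1$ and $m=M/2+1$) that the paper states without detail.
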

\begin{proof}
Using the definitions of $\mathbf{R}$ and $\mathbf{F}$  in Section \ref{sec:MAM2} yields that matrix $\mathbf{F}^*\mathbf{R}^*\mathbf{R}\mathbf{F}$ is real.
Since $\mathbf{W}$ is a real matrix, applying the definition of $\mathbf{K}$ in \eqref{def:y_K} leads to that  $\mathbf{K}^*\mathbf{K}$ is a real matrix. Furthermore,
as $\mathbf{K}\mathbf{K}^*=\mathbf{I}$,
we have that $\mathbf{K}^*\mathbf{K}$ is an idempotent matrix. Immediately, it follows from \cite{Horn-Johnson} that if $\mu$ is an eigenvalue  of the matrix $\mathbf{K}^*\mathbf{K}$, then  $\mu=0$ or $\mu=1$. Let $r(\mathbf{A})$ denote the rank of matrix $\mathbf{A}$. It can be proved that $r(\mathbf{K}^*\mathbf{K})=d$. This together with the above discussion leads to the desired conclusion of this lemma.
\end{proof}


The next lemma follows from Lemma \ref{lemma:K*K_eigvalue}.

\begin{lemma}\label{lemma:y_existence}
If  $\beta, \gamma>0$, then $\mathbf{I}+\frac{\beta}{\gamma}\mathbf{K}^*\mathbf{K}$ is invertible.
\end{lemma}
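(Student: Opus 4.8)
The plan is to exploit the eigenvalue structure of $\mathbf{K}^*\mathbf{K}$ recorded in Lemma \ref{lemma:K*K_eigvalue}, together with the elementary fact that a square matrix is invertible if and only if $0$ is not one of its eigenvalues. First I would observe that the identity $\mathbf{I}$ commutes with $\mathbf{K}^*\mathbf{K}$ (trivially), so the two matrices share a common eigenbasis; consequently, if $\mu$ is any eigenvalue of $\mathbf{K}^*\mathbf{K}$ with eigenvector $\mathbf{v}$, then
\begin{eqnarray*}
\left(\mathbf{I}+\frac{\beta}{\gamma}\mathbf{K}^*\mathbf{K}\right)\mathbf{v}=\left(1+\frac{\beta}{\gamma}\mu\right)\mathbf{v},
\end{eqnarray*}
and conversely every eigenvalue of $\mathbf{I}+\frac{\beta}{\gamma}\mathbf{K}^*\mathbf{K}$ has this form. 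This exhibits a bijection between the spectra, so the full list of eigenvalues of $\mathbf{I}+\frac{\beta}{\gamma}\mathbf{K}^*\mathbf{K}$ is $\{1+\frac{\beta}{\gamma}\mu : \mu\ \text{an eigenvalue of}\ \mathbf{K}^*\mathbf{K}\}$.

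Next I would invoke Lemma \ref{lemma:K*K_eigvalue}, which tells us that each eigenvalue $\mu$ of $\mathbf{K}^*\mathbf{K}$ equals either $0$ or $1$. Substituting these two values into $1+\frac{\beta}{\gamma}\mu$ shows that every eigenvalue of $\mathbf{I}+\frac{\beta}{\gamma}\mathbf{K}^*\mathbf{K}$ is either $1$ or $1+\frac{\beta}{\gamma}$. Since $\beta,\gamma>0$, both of these numbers are strictly positive, hence in particular nonzero. Therefore $0$ is not an eigenvalue of $\mathbf{I}+\frac{\beta}{\gamma}\mathbf{K}^*\mathbf{K}$, which yields that the matrix is invertible. (Equivalently, its determinant, being the product of its eigenvalues, is at least $1>0$.)

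I do not anticipate any genuine obstacle here: the statement is a direct corollary of the eigenvalue computation already carried out in Lemma \ref{lemma:K*K_eigvalue}, and the only point requiring a moment's care is confirming that the shifted list $1+\frac{\beta}{\gamma}\mu$ accounts for \emph{all} eigenvalues of the new matrix, which is immediate since $\mathbf{I}$ and $\mathbf{K}^*\mathbf{K}$ are simultaneously diagonalizable. As an alternative route that avoids even the eigenvalue list, one could argue positive definiteness directly: for every nonzero $\mathbf{v}$,
\begin{eqnarray*}
\left\langle\left(\mathbf{I}+\frac{\beta}{\gamma}\mathbf{K}^*\mathbf{K}\right)\mathbf{v},\mathbf{v}\right\rangle=\|\mathbf{v}\|_2^2+\frac{\beta}{\gamma}\|\mathbf{K}\mathbf{v}\|_2^2\geq\|\mathbf{v}\|_2^2>0,
\end{eqnarray*}
so the matrix is positive definite and hence invertible. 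I would, however, present the eigenvalue argument as the primary proof, since the lemma is explicitly stated to follow from Lemma \ref{lemma:K*K_eigvalue}.
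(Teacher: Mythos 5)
Your proposal is correct and follows essentially the same route as the paper: the paper's proof simply invokes Lemma \ref{lemma:K*K_eigvalue} to conclude that $\mathbf{I}+\frac{\beta}{\gamma}\mathbf{K}^*\mathbf{K}$ is positive definite and hence invertible, which is exactly the content of your eigenvalue-shift argument. Your alternative quadratic-form argument is a fine (and even more self-contained) variant, but it does not change the substance.
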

\begin{proof}
If $\beta, \gamma>0$, employing Lemma \ref{lemma:K*K_eigvalue} yields that $\mathbf{I}+\frac{\beta}{\gamma}\mathbf{K}^*\mathbf{K}$ is a positive definite matrix. Hence, it is invertible.
\end{proof}

Lemma \ref{lemma:y_existence} ensures that given $\mathbf{x}\in \mathbb{R}^N$, the linear system
$$
\mathbf{y}=\mathbf{x}
-\frac{\beta}{\gamma}\mathbf{K}^*(\mathbf{K}\mathbf{y}-\mathbf{r})
$$
has a unique solution $\mathbf{y}\in \mathbb{R}^N$. It can be solved by an internal iteration:
\begin{eqnarray}
\left\{
\begin{array}{l}
\mathbf{v}_k^0=\mathbf{y}^{k},
\\
{\rm For~j\geq 1}
\\
\mathbf{v}_k^{j}=\mathbf{x}^{k+1}
-\frac{\beta}{\gamma}\mathbf{K}^*\left(\mathbf{K}\mathbf{v}_k^{j-1}-\mathbf{r}\right),
\\
\mathbf{y}^{k+1}=\mathbf{v}_k^{\infty}.\ \ \
\end{array}
\right.\label{inner_loop}
\end{eqnarray}
Integrating iteration \eqref{inner_loop} with Algorithm \eqref{eq:fixed-eqs_two_var_iter}, we have the following double-loop iteration algorithm for solving the model \eqref{model:two_variable}:
\begin{eqnarray}
\left\{
\begin{array}{l}
\mathbf{x}^{k+1}\in\mathrm{prox}_{\beta\|\cdot\|_0}
\left(\mathbf{y}^{k}\right),
\\
~~~~~~~\mathbf{v}_k^0=\mathbf{y}^{k},
\\
~~~~~~~ {\rm For~j\geq 1}
\\
~~~~~~~\mathbf{v}_k^{j}=\mathbf{x}^{k+1}
-\frac{\beta}{\gamma}\mathbf{K}^*\left(\mathbf{K}\mathbf{v}_k^{j-1}
-\mathbf{r}\right),
\\
\mathbf{y}^{k+1}=\mathbf{v}_k^{\infty}.\ \ \
\end{array}
\right.\label{eq:fixed-eqs_two_var_double_loop}
\end{eqnarray}


\section{Convergence Analysis }\label{sec:MAM_conv}\setcounter{equation}{0}
In this section, we study the convergence property of Algorithm \eqref{eq:fixed-eqs_two_var_iter}.
Specifically, we show that the support of the sparse variable $\mathbf{x}^k$ generated by Algorithm \eqref{eq:fixed-eqs_two_var_iter} will remain unchanged after a finite number of iterations, and thus, Algorithm \eqref{eq:fixed-eqs_two_var_iter} solving the non-convex optimization problem \eqref{model:two_variable} reduces to solving a convex optimization problem on the support. The convergence analysis of Algorithm \eqref{eq:fixed-eqs_two_var_iter} is then boiled down to analyzing convergence of a convex optimization problem.

We now outline the steps of the convergence analysis.  Firstly, a function $E$  is introduced. Under the assumption that $\left(\mathbf{x},\mathbf{y}\right)\in\mathbb{R}^{N}\times \mathbb{R}^{N}$ satisfy the second equation of \eqref{solution_twovar_model_fixedeq},
the function $F$ defined by \eqref{def:F} is then rewritten by  $E$.  Let $\{(\mathbf{x}^k,\mathbf{y}^k)\}$ be a sequence generated by Algorithm \eqref{eq:fixed-eqs_two_var_iter}. To prove that the sequence $\{F(\mathbf{x}^{k},\mathbf{y}^{k})\}$ is convergent,
the property of  $E$ is further explored to present a relation between $E(\mathbf{y}^{k+1})$ and $E(\mathbf{y}^k)$. Applying the property of $\alpha$-averaged nonexpansive operators, the sequence $\{(\mathbf{x}^k,\mathbf{y}^k)\}$ is then proved to converge to   a minimizer $(\mathbf{x}^{\star},\mathbf{y}^{\star})$ of the convex optimization model \eqref{model:convex_model} with $\mathcal{N}: =N(\mathbf{x}^{\star})$. This together  with Theorem
\ref{lemma:relation_nonconv_convex} shows that $(\mathbf{x}^{\star},\mathbf{y}^{\star})$ is a local minimizer of the non-convex model \eqref{model:two_variable}.  Finally, it follows from Proposition \ref{pro:model_relation_localm} that $\mathbf{y}^{\star}$ is a local minimizer of the proposed model \eqref{model:general}.

We first consider a function $E$, which is closely related to both functions $F$ and $G$. Specifically, we define  $E~:\mathbb{R}^{d}\times\mathbb{R}^{N}\rightarrow\mathbb{R}$ at $\mathbf{y}\in\mathbb{R}^{N}$ by
\begin{eqnarray}
E(\mathbf{r},\mathbf{y})~:=\frac{L}{2}\left\|\mathbf{K}\mathbf{y}-\mathbf{r}\right\|_2^2,
\label{def:E_x}
\end{eqnarray}
where $L:=1+\frac{\beta}{\gamma}$.
As $\mathbf{r}\in\mathbb{R}^{d}$ in the problem of inverting incomplete Fourier transform  \eqref{seismogram-model-origin-app} is fixed,  we write $E(\mathbf{r},\mathbf{y})$ as $E(\mathbf{y})$ for short.
In the following lemma, we rewrite the objective function $F$ given in \eqref{model:two_variable} in terms of $E$.

\begin{lemma}\label{lemma:E_G_F_relation}
Let $\mathbf{x}\in\mathbb{R}^{N}$. If $\mathbf{y}=\mathbf{x}
-\frac{\beta}{\gamma}\mathbf{K}^*(\mathbf{K}\mathbf{y}-\mathbf{r})$,
then
\begin{eqnarray}
G(\mathbf{x},\mathbf{y})=E(\mathbf{y})
 \ \ {\rm and} \ \ F(\mathbf{x},\mathbf{y})=E(\mathbf{y})+\gamma\left\|\mathbf{x}\right\|_0.
 \label{relation_EFG}
 \end{eqnarray}
\end{lemma}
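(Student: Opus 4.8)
The plan is to reduce the entire lemma to the single identity $G(\mathbf{x},\mathbf{y})=E(\mathbf{y})$, since the second identity then follows immediately: the decomposition $F(\mathbf{x},\mathbf{y})=G(\mathbf{x},\mathbf{y})+\gamma\|\mathbf{x}\|_0$ recorded just after definition \eqref{def:G} gives $F(\mathbf{x},\mathbf{y})=E(\mathbf{y})+\gamma\|\mathbf{x}\|_0$ with no further work. So all the content is in matching the quadratic part of $G$ with $E$.

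First I would use the hypothesis $\mathbf{y}=\mathbf{x}-\frac{\beta}{\gamma}\mathbf{K}^*(\mathbf{K}\mathbf{y}-\mathbf{r})$ to rewrite the cross term in $G$. Rearranging gives $\mathbf{x}-\mathbf{y}=\frac{\beta}{\gamma}\mathbf{K}^*(\mathbf{K}\mathbf{y}-\mathbf{r})$, so that $\|\mathbf{x}-\mathbf{y}\|_2^2=\frac{\beta^2}{\gamma^2}\|\mathbf{K}^*(\mathbf{K}\mathbf{y}-\mathbf{r})\|_2^2$. Setting $\mathbf{w}:=\mathbf{K}\mathbf{y}-\mathbf{r}\in\mathbb{R}^{d}$, the key step is to collapse $\|\mathbf{K}^*\mathbf{w}\|_2^2$ by invoking the property $\mathbf{K}\mathbf{K}^*=\mathbf{I}$ established in the paragraph preceding Lemma \ref{lemma:K*K_eigvalue}. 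Indeed $\|\mathbf{K}^*\mathbf{w}\|_2^2=\mathbf{w}^*\mathbf{K}\mathbf{K}^*\mathbf{w}=\mathbf{w}^*\mathbf{w}=\|\mathbf{w}\|_2^2$, so the cross term is simply a rescaling of the fidelity term.

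Substituting back, $\frac{\gamma}{2\beta}\|\mathbf{x}-\mathbf{y}\|_2^2=\frac{\gamma}{2\beta}\cdot\frac{\beta^2}{\gamma^2}\|\mathbf{K}\mathbf{y}-\mathbf{r}\|_2^2=\frac{\beta}{2\gamma}\|\mathbf{K}\mathbf{y}-\mathbf{r}\|_2^2$. Adding this to the fidelity term $\frac{1}{2}\|\mathbf{K}\mathbf{y}-\mathbf{r}\|_2^2$ from definition \eqref{def:G} yields $G(\mathbf{x},\mathbf{y})=\frac{1}{2}\bigl(1+\frac{\beta}{\gamma}\bigr)\|\mathbf{K}\mathbf{y}-\mathbf{r}\|_2^2=\frac{L}{2}\|\mathbf{K}\mathbf{y}-\mathbf{r}\|_2^2=E(\mathbf{y})$, using $L=1+\frac{\beta}{\gamma}$ and definition \eqref{def:E_x}. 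This completes the first identity, and the second follows as noted above.

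The only substantive ingredient is the orthogonality relation $\mathbf{K}\mathbf{K}^*=\mathbf{I}$, which converts $\|\mathbf{K}^*\mathbf{w}\|_2$ into $\|\mathbf{w}\|_2$; everything else is bookkeeping of the constants $\beta$, $\gamma$, and $L$. I therefore do not anticipate any genuine obstacle, though one must be slightly careful that $\mathbf{K}\mathbf{K}^*$ (rather than $\mathbf{K}^*\mathbf{K}$) is the identity, so that the reduction applies to the $d$-dimensional residual $\mathbf{w}$ and not in the full $N$-dimensional space.
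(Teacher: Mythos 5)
Your proposal is correct and follows essentially the same route as the paper's proof: both rearrange the hypothesis to express $\mathbf{x}-\mathbf{y}$ as $\frac{\beta}{\gamma}\mathbf{K}^*(\mathbf{K}\mathbf{y}-\mathbf{r})$, invoke $\mathbf{K}\mathbf{K}^*=\mathbf{I}$ to collapse $\|\mathbf{K}^*(\mathbf{K}\mathbf{y}-\mathbf{r})\|_2^2$ to $\|\mathbf{K}\mathbf{y}-\mathbf{r}\|_2^2$, and then obtain the second identity from $F=G+\gamma\|\cdot\|_0$. Your explicit remark that it is $\mathbf{K}\mathbf{K}^*$ (acting on the $d$-dimensional residual) rather than $\mathbf{K}^*\mathbf{K}$ that equals the identity is a helpful clarification the paper leaves implicit.
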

\begin{proof}
We prove the first equation of \eqref{relation_EFG}. If
$\mathbf{y}=\mathbf{x}
-\frac{\beta}{\gamma}\mathbf{K}^*(\mathbf{K}\mathbf{y}-\mathbf{r})$,  there holds
$$
\frac{\gamma}{2\beta}\left\|\mathbf{x}-\mathbf{y}\right\|_2^2
=\frac{\gamma}{2\beta}\left\|\frac{\beta}{\gamma}\mathbf{K}^* \mathbf{K}\mathbf{y}-\frac{\beta}{\gamma}\mathbf{K}^*\mathbf{r}\right\|_2^2.
$$
Note that $\mathbf{KK}^*=\mathbf{I}$. Combining this with the definition of $\|\cdot\|_2$, the above equation leads to
$
\frac{\gamma}{2\beta}\left\|\mathbf{x}-\mathbf{y}\right\|_2^2
=\frac{\beta}{2\gamma}\left\|\mathbf{K}\mathbf{y}-\mathbf{r}\right\|_2^2.
$
This together with the definition \eqref{def:G} of $G$   yields the first equation of \eqref{relation_EFG}.
The second equation of \eqref{relation_EFG} follows from the first equation and the relation between $F$  and  $G$.
\end{proof}

We next explore the property that the function $E$ is bounded above by a quadratic function. This property plays a crucial role in our convergence analysis.

\begin{lemma}\label{lemma:E_property}
If $E$ is defined by \eqref{def:E_x}, then
 \begin{eqnarray*}
E(\mathbf{z})\leq E(\mathbf{s})+\left<\nabla E(\mathbf{s}),\mathbf{z}-\mathbf{s}\right>
+\frac{L}{2}\|\mathbf{z}-\mathbf{s}\|_2^2,
\ \ {\rm for\ all}~\mathbf{z},\mathbf{s}\in\mathbb{R}^{N}.
 \end{eqnarray*}

\end{lemma}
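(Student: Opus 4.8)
The plan is to exploit the fact that $E$ is a quadratic function, so that its second-order Taylor expansion about any point is \emph{exact}, with no higher-order remainder; the inequality will then reduce to a single eigenvalue bound supplied by Lemma \ref{lemma:K*K_eigvalue}. First I would record the gradient: from the definition \eqref{def:E_x}, $\nabla E(\mathbf{s})=L\,\mathbf{K}^*(\mathbf{K}\mathbf{s}-\mathbf{r})$. This is a genuine real vector, because the reality of $\mathbf{K}^*\mathbf{K}$ asserted in Lemma \ref{lemma:K*K_eigvalue}, together with the conjugate-symmetric structure of the row selector $\mathbf{R}$ (which the paper has already used to treat $\mathbf{K}^*(\mathbf{K}\mathbf{y}-\mathbf{r})$ as real throughout), guarantees that the relevant quantities are real.

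Next I would set $\mathbf{u}:=\mathbf{z}-\mathbf{s}$ and expand $E(\mathbf{z})=\frac{L}{2}\|(\mathbf{K}\mathbf{s}-\mathbf{r})+\mathbf{K}\mathbf{u}\|_2^2$ directly. Collecting the constant, cross, and quadratic terms produces the exact identity
\begin{equation*}
E(\mathbf{z})=E(\mathbf{s})+\langle\nabla E(\mathbf{s}),\,\mathbf{z}-\mathbf{s}\rangle+\frac{L}{2}\|\mathbf{K}(\mathbf{z}-\mathbf{s})\|_2^2 .
\end{equation*}
The cross term reproduces $\langle\nabla E(\mathbf{s}),\mathbf{z}-\mathbf{s}\rangle$ exactly (using $\mathbf{a}^*\mathbf{K}\mathbf{u}=(\mathbf{K}^*\mathbf{a})^*\mathbf{u}$ with $\mathbf{a}=\mathbf{K}\mathbf{s}-\mathbf{r}$ and the reality noted above), and the entire quadratic remainder is the single term $\frac{L}{2}\|\mathbf{K}(\mathbf{z}-\mathbf{s})\|_2^2$.

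It then remains only to bound $\|\mathbf{K}(\mathbf{z}-\mathbf{s})\|_2^2\le\|\mathbf{z}-\mathbf{s}\|_2^2$, and this is the crux of the argument, being the one place where the structure of $\mathbf{K}$ enters. Since $\|\mathbf{K}\mathbf{u}\|_2^2=\mathbf{u}^\top\mathbf{K}^*\mathbf{K}\,\mathbf{u}$, the bound follows from the fact that the largest eigenvalue of $\mathbf{K}^*\mathbf{K}$ equals $1$, which is exactly the content of Lemma \ref{lemma:K*K_eigvalue} (equivalently, $\mathbf{K}\mathbf{K}^*=\mathbf{I}$ forces $\mathbf{K}^*\mathbf{K}$ to be an orthogonal projection, whose spectral norm is $1$). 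Substituting $\|\mathbf{K}(\mathbf{z}-\mathbf{s})\|_2^2\le\|\mathbf{z}-\mathbf{s}\|_2^2$ into the exact identity immediately yields the asserted inequality.

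I do not anticipate any serious obstacle: the statement is precisely the classical descent lemma applied to the $L$-smooth quadratic $E$, whose linear gradient has Lipschitz constant $L\,\|\mathbf{K}^*\mathbf{K}\|=L$. The only point demanding care is the bookkeeping of the possibly complex entries of $\mathbf{K}$ in the inner-product computation, which is resolved cleanly by the reality established in Lemma \ref{lemma:K*K_eigvalue}.
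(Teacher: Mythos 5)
Your proposal is correct, but it takes a more elementary route than the paper. The paper's proof first shows that $\nabla E$ is Lipschitz continuous with constant $L$ (via $\|\nabla E(\mathbf{z})-\nabla E(\mathbf{s})\|_2 = L\|\mathbf{K}^*\mathbf{K}(\mathbf{z}-\mathbf{s})\|_2 \leq L\|\mathbf{z}-\mathbf{s}\|_2$, using $\|\mathbf{K}^*\mathbf{K}\|_2=1$) and then invokes the classical descent lemma for differentiable convex functions with Lipschitz gradient. You instead expand the quadratic $E$ exactly about $\mathbf{s}$, obtaining the identity $E(\mathbf{z})=E(\mathbf{s})+\langle\nabla E(\mathbf{s}),\mathbf{z}-\mathbf{s}\rangle+\frac{L}{2}\|\mathbf{K}(\mathbf{z}-\mathbf{s})\|_2^2$, and then bound the remainder by $\frac{L}{2}\|\mathbf{z}-\mathbf{s}\|_2^2$ using the same spectral fact from Lemma \ref{lemma:K*K_eigvalue}. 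The two arguments hinge on exactly the same ingredient ($\mathbf{K}^*\mathbf{K}$ having spectral norm $1$ because $\mathbf{K}\mathbf{K}^*=\mathbf{I}$), but your version is self-contained (no appeal to an external descent lemma) and in fact yields slightly more: the exact remainder $\frac{L}{2}\|\mathbf{K}(\mathbf{z}-\mathbf{s})\|_2^2$ is sharper than the stated bound, and terms of this form are precisely what the paper later tracks in the telescoping estimate \eqref{eq:cov_xk_Kyk}. Your attention to the reality of the cross term (writing the squared norm of a sum as producing $2\,\mathrm{Re}\langle\cdot,\cdot\rangle$ and using the conjugate-symmetric row selection) is the one bookkeeping point the paper glosses over, and you handle it appropriately.
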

\begin{proof}
We first show that $\nabla E$ is Lipschitz continuous with Lipschitz constant $L$. It is observed that
$\left\|\nabla E(\mathbf{z})-\nabla E(\mathbf{s})\right\|_2^2 =L^2
\left\|\mathbf{K}^* \mathbf{K}(\mathbf{z}-\mathbf{s})\right\|_2^2.$
As $\|\mathbf{K}^*\mathbf{K}\|_2=1$,
we obtain that
$
 \|\nabla E(\mathbf{z})-\nabla E(\mathbf{s})\|_2^2
 \leq L^2
 \|\mathbf{z}-\mathbf{s}\|_2^2,
$
which ensures that $\nabla E$ is Lipschitz continuous with Lipschitz constant $L$.
The result of this lemma follows immediately from the well-known property of a differentiable convex function with a Lipschitz continuous gradient.
\end{proof}


We follow \cite{SXZ,ZSX,ZLKSZX} to establish a convergence theorem of Algorithm \eqref{eq:fixed-eqs_two_var_iter}.

\begin{theorem}\label{theorem:conv_analysis}
Let $\left\{(\mathbf{x}^k,\mathbf{y}^k)\right\}$
be a sequence generated by Algorithm \eqref{eq:fixed-eqs_two_var_iter} with an initial
$(\mathbf{x}^0,\mathbf{y}^0)\in\mathbb{R}^{N}\times\mathbb{R}^{N}$ for model \eqref{model:two_variable}. If $\beta,\gamma$ are positive numbers and  $0<\frac{\beta}{\gamma}<\frac{\sqrt{5}-1}{2}$, then the following statements hold:
\begin{align}
&(i)~~ F(\mathbf{x}^{k+1},\mathbf{y}^{k+1})\leq F(\mathbf{x}^{k},\mathbf{y}^{k}) \ for\ all\ k\geq 0 \ and\ the \ sequence\  \{F(\mathbf{x}^{k},\mathbf{y}^{k})\}\ converges.
\nonumber
\\
&(ii)~~The\ sequence \ \{(\mathbf{x}^{k},\mathbf{y}^{k})\} \ has \ a \ finite
\ length, \ that\ is
\nonumber
\\
& ~~~~~~~~
\sum_{k=0}^{+\infty}\left\|\mathbf{x}^{k+1}-\mathbf{x}^{k}\right\|_2^2<+\infty,
\ \ \ \ \ \ \ \ \ \
\sum_{k=0}^{+\infty}\left\|\mathbf{y}^{k+1}-\mathbf{y}^{k}\right\|_2^2<+\infty.
\label{sum_x_y_ser}
\\
&(iii)~~\lim\limits_{k\rightarrow \infty}\left\|\mathbf{x}^{k+1}-\mathbf{x}^{k}\right\|_2
=\lim\limits_{k\rightarrow \infty}\left\|\mathbf{y}^{k+1}-\mathbf{y}^{k}\right\|_2=0.
\nonumber
\end{align}
\end{theorem}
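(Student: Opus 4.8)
The plan is to treat Algorithm \eqref{eq:fixed-eqs_two_var_iter} as a two-block descent scheme and to establish a single \emph{sufficient-decrease} inequality of the form $F(\mathbf{x}^k,\mathbf{y}^k)-F(\mathbf{x}^{k+1},\mathbf{y}^{k+1})\ge \frac{\gamma}{2\beta}\|\mathbf{y}^{k+1}-\mathbf{y}^k\|_2^2$, from which all three statements follow by elementary arguments. First I would split the passage from $(\mathbf{x}^k,\mathbf{y}^k)$ to $(\mathbf{x}^{k+1},\mathbf{y}^{k+1})$ through the intermediate point $(\mathbf{x}^{k+1},\mathbf{y}^k)$ and bound each half separately. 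For the $\mathbf{x}$-block, the first line of \eqref{eq:fixed-eqs_two_var_iter} says $\mathbf{x}^{k+1}$ lies in $\mathrm{prox}_{\beta\|\cdot\|_0}(\mathbf{y}^k)$; since \emph{any} selection from this set-valued, non-convex proximity set satisfies the defining minimality inequality \eqref{def:prox_beta_ell0}, testing it against the competitor $\mathbf{x}=\mathbf{x}^k$ and multiplying by $\gamma$ gives $\frac{\gamma}{2\beta}\|\mathbf{x}^{k+1}-\mathbf{y}^k\|_2^2+\gamma\|\mathbf{x}^{k+1}\|_0\le \frac{\gamma}{2\beta}\|\mathbf{x}^k-\mathbf{y}^k\|_2^2+\gamma\|\mathbf{x}^k\|_0$, i.e.\ $F(\mathbf{x}^{k+1},\mathbf{y}^k)\le F(\mathbf{x}^k,\mathbf{y}^k)$ after adding the common fidelity term $\tfrac12\|\mathbf{K}\mathbf{y}^k-\mathbf{r}\|_2^2$.

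For the $\mathbf{y}$-block, I would observe that the second line of \eqref{eq:fixed-eqs_two_var_iter} is exactly the Fermat condition for minimizing $\mathbf{y}\mapsto G(\mathbf{x}^{k+1},\mathbf{y})$ defined in \eqref{def:G}; this map is $\frac{\gamma}{\beta}$-strongly convex, since its Hessian is $\mathbf{K}^*\mathbf{K}+\frac{\gamma}{\beta}\mathbf{I}\succeq\frac{\gamma}{\beta}\mathbf{I}$, and by Lemma \ref{lemma:y_existence} it has $\mathbf{y}^{k+1}$ as its unique minimizer. Strong convexity then yields $G(\mathbf{x}^{k+1},\mathbf{y}^k)-G(\mathbf{x}^{k+1},\mathbf{y}^{k+1})\ge \frac{\gamma}{2\beta}\|\mathbf{y}^{k+1}-\mathbf{y}^k\|_2^2$, and adding $\gamma\|\mathbf{x}^{k+1}\|_0$ to both sides turns this into the decrease of $F$ across the $\mathbf{y}$-step. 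Chaining the two halves produces the sufficient-decrease inequality above.

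With that inequality in hand, Part (i) is immediate: $F(\mathbf{x}^{k+1},\mathbf{y}^{k+1})\le F(\mathbf{x}^k,\mathbf{y}^k)$, and since $F\ge 0$ as a sum of squared norms and $\gamma\|\cdot\|_0$, the monotone bounded sequence $\{F(\mathbf{x}^k,\mathbf{y}^k)\}$ converges. For Part (ii), telescoping the sufficient-decrease inequality gives $\frac{\gamma}{2\beta}\sum_{k\ge 1}\|\mathbf{y}^{k+1}-\mathbf{y}^k\|_2^2\le F(\mathbf{x}^1,\mathbf{y}^1)-\lim_k F(\mathbf{x}^k,\mathbf{y}^k)<+\infty$, which is the second series in \eqref{sum_x_y_ser}. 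To recover the $\mathbf{x}$-series I would use that for $k\ge 1$ the pair $(\mathbf{x}^k,\mathbf{y}^k)$ satisfies the second equation of \eqref{solution_twovar_model_fixedeq}, hence $\mathbf{x}^k=(\mathbf{I}+\frac{\beta}{\gamma}\mathbf{K}^*\mathbf{K})\mathbf{y}^k-\frac{\beta}{\gamma}\mathbf{K}^*\mathbf{r}$ and therefore $\mathbf{x}^{k+1}-\mathbf{x}^k=(\mathbf{I}+\frac{\beta}{\gamma}\mathbf{K}^*\mathbf{K})(\mathbf{y}^{k+1}-\mathbf{y}^k)$; since $\|\mathbf{I}+\frac{\beta}{\gamma}\mathbf{K}^*\mathbf{K}\|_2=1+\frac{\beta}{\gamma}=L$ by Lemma \ref{lemma:K*K_eigvalue}, the $\mathbf{x}$-increments are dominated by $L^2$ times the $\mathbf{y}$-increments and the first series in \eqref{sum_x_y_ser} follows, the single term $k=0$ being harmless. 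Part (iii) is then the standard fact that the general term of a convergent series of nonnegative numbers tends to zero.

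The main obstacle is conceptual rather than computational: because $\mathrm{prox}_{\beta\|\cdot\|_0}$ is set-valued and non-convex, ``descent'' cannot be read off from smoothness and must instead be extracted from the variational characterization \eqref{def:prox_beta_ell0}, valid for every selection. A secondary point is that finite length in the sparse variable $\mathbf{x}$ is not available directly and has to be transported from the $\mathbf{y}$-series through the bounded affine relation between consecutive iterates. I would also note that, with this argument, statements (i)--(iii) do not actually require the quantitative bound $0<\frac{\beta}{\gamma}<\frac{\sqrt5-1}{2}$; that threshold, equivalent to $\frac{\beta}{\gamma}\,L<1$, is what makes the combined update an $\alpha$-averaged nonexpansive operator and is reserved for the convergence of the full sequence. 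Alternatively, to stay within the paper's outline, one may rewrite $F$ through $E$ using Lemma \ref{lemma:E_G_F_relation} and invoke the descent bound of Lemma \ref{lemma:E_property}; the coefficient in front of $\|\mathbf{y}^{k+1}-\mathbf{y}^k\|_2^2$ is then positive precisely when $\frac{\beta}{\gamma}(1+\frac{\beta}{\gamma})<1$, which is exactly where the golden-ratio constant enters.
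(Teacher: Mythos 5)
Your proof is correct, and it takes a genuinely different route from the paper's. The paper does not pass through the intermediate point $(\mathbf{x}^{k+1},\mathbf{y}^k)$: it writes $F(\mathbf{x}^{k+1},\mathbf{y}^{k+1})=E(\mathbf{y}^{k+1})+\gamma\|\mathbf{x}^{k+1}\|_0$ via Lemma \ref{lemma:E_G_F_relation}, applies the descent lemma (Lemma \ref{lemma:E_property}) with Lipschitz constant $L=1+\tfrac{\beta}{\gamma}$, converts the resulting gradient and quadratic terms into $\mathbf{x}$-increments through the identities \eqref{iter_analysis_F1W} and \eqref{iter_analysis_F3W} (both consequences of $\mathbf{K}\mathbf{K}^*=\mathbf{I}$), and absorbs them using the variational characterization of the prox step; the net coefficient $\tfrac12\left(\tfrac{\gamma}{\beta}-L\right)$ in \eqref{eq:cov_xk_Kyk} is positive exactly when $\tfrac{\beta}{\gamma}<\tfrac{\sqrt5-1}{2}$, which is where the hypothesis is consumed. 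Your argument instead reads the scheme as exact alternating minimization: the $\mathbf{x}$-step exactly minimizes $F(\cdot,\mathbf{y}^k)$ (the prox inequality tested against $\mathbf{x}^k$), and the $\mathbf{y}$-step exactly minimizes the $\tfrac{\gamma}{\beta}$-strongly convex map $G(\mathbf{x}^{k+1},\cdot)$ — your identification of the second update with the Fermat condition $(\mathbf{I}+\tfrac{\beta}{\gamma}\mathbf{K}^*\mathbf{K})\mathbf{y}^{k+1}=\mathbf{x}^{k+1}+\tfrac{\beta}{\gamma}\mathbf{K}^*\mathbf{r}$ is exactly right — so strong convexity hands you the sufficient decrease with coefficient $\tfrac{\gamma}{2\beta}$ directly, with no descent lemma and no sign condition to check. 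Your transport of the finite-length property from $\mathbf{y}$ to $\mathbf{x}$ via the affine relation is the same mechanism as the paper's \eqref{iter_analysis_F3W}, restricted to $k\ge1$ where that relation actually holds. What your route buys is notable: statements (i)--(iii) hold for any $\beta,\gamma>0$, confirming your observation that the golden-ratio threshold is only needed later (for the averaged-nonexpansiveness argument of Lemma \ref{lemma:IK*K_alpha2_ave} and the convergence of the full sequence); moreover your proof of (i) at $k=0$ uses nothing about the initialization, whereas the paper's identities \eqref{iter_analysis_F1W} and \eqref{iter_analysis_F3W} implicitly assume the second update equation also holds at index $k=0$. What the paper's computation buys in exchange is the explicit summability of $\|\mathbf{K}(\mathbf{y}^{k+1}-\mathbf{y}^k)\|_2^2$ in \eqref{ineq:y_k_conv}, though that also follows from your version together with \eqref{iter_analysis_F3W}.
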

\begin{proof}
We first prove Item (i). The second part of Item (i) follows directly from the first part and the fact that $F(\mathbf{x}^{k},\mathbf{y}^{k})\geq 0$ for all positive integer $k$. It remains to prove the first part of Item (i).
By the second equation in Algorithm \eqref{eq:fixed-eqs_two_var_iter}
and the second equation of \eqref{relation_EFG} in Lemma \ref{lemma:E_G_F_relation}, we have that
$
F\left(\mathbf{x}^{k+1},\mathbf{y}^{k+1}\right)=E\left(\mathbf{y}^{k+1}\right)+\gamma\left\|\mathbf{x}^{k+1}\right\|_0.
$
Combining this equation with Lemma \ref{lemma:E_property} yields that
\begin{eqnarray}
F\left(\mathbf{x}^{k+1},\mathbf{y}^{k+1}\right)
\leq E\left(\mathbf{y}^{k}\right)+\left<\nabla E\left(\mathbf{y}^{k}\right),\mathbf{y}^{k+1}-\mathbf{y}^{k}\right>
+\frac{L}{2}\left\|\mathbf{y}^{k+1}-\mathbf{y}^{k}
\right\|_2^2+\gamma\left\|\mathbf{x}^{k+1}\right\|_0.
\label{iter_analysis_1}
\end{eqnarray}
We next exam the second and third terms on the right hand side of inequality \eqref{iter_analysis_1}. Specifically, we shall establish that
\begin{eqnarray}
\left<\nabla E\left(\mathbf{y}^{k}\right),\mathbf{y}^{k+1}-\mathbf{y}^{k}\right>=\frac{\gamma}{\gamma+\beta}\left<\nabla E\left(\mathbf{y}^{k}\right),\mathbf{x}^{k+1}-\mathbf{x}^{k}\right>,
\label{iter_analysis_F1W}
\end{eqnarray}
and
\begin{eqnarray}
L\left\|\mathbf{y}^{k+1}-\mathbf{y}^{k}\right\|_2^2
\leq \frac{\gamma}{\beta}\left\|\mathbf{x}^{k+1}-\mathbf{x}^{k}\right\|_2^2.
\label{iter_analysis_F2W}
\end{eqnarray}

We first prove \eqref{iter_analysis_F1W}. To this end,
we differentiate $E$ defined by \eqref{def:E_x} to get that
\begin{eqnarray}
\nabla E\left(\mathbf{y}^{k}\right)
=L\left(\mathbf{K}^* \mathbf{K}\mathbf{y}^{k}-\mathbf{K}^*\mathbf{r}\right).
\label{iter_analysis_2}
\end{eqnarray}
Combining the second equation in \eqref{eq:fixed-eqs_two_var_iter} and equation \eqref{iter_analysis_2}, we obtain that
$$
\frac{\gamma}{\gamma+\beta}\left<\nabla E\left(\mathbf{y}^{k}\right),\mathbf{x}^{k+1}-\mathbf{x}^{k}\right>
=\left<\left(\mathbf{K}^* \mathbf{K}\mathbf{y}^{k}-\mathbf{K}^*\mathbf{r}\right),\mathbf{y}^{k+1}
-\mathbf{y}^{k}+\frac{\beta}{\gamma}\mathbf{K}^* \mathbf{K}\left(\mathbf{y}^{k+1}-\mathbf{y}^{k}\right)
\right>.
$$
Expanding the right hand side of the equation above with the fact that $\mathbf{KK}^*=\mathbf{I}$ and combining the like terms with noticing the definition of $L$ yield
$$
\frac{\gamma}{\gamma+\beta}\left<\nabla E\left(\mathbf{y}^{k}\right),\mathbf{x}^{k+1}-\mathbf{x}^{k}\right>
=\left<L\left(\mathbf{K}^* \mathbf{K}\mathbf{y}^{k}-\mathbf{K}^*\mathbf{r}\right),\mathbf{y}^{k+1}-\mathbf{y}^{k}\right>.
$$
Substituting the left hand side of equation \eqref{iter_analysis_2} into the right hand side of the above equation yields equation \eqref{iter_analysis_F1W}.

We now prove that inequality \eqref{iter_analysis_F2W} holds for $0<\frac{\beta}{\gamma} <\frac{\sqrt{5}-1}{2}$. By using the second equation in Algorithm \eqref{eq:fixed-eqs_two_var_iter}, expanding the resulting expression and employing the relation $\mathbf{KK}^*=\mathbf{I}$, we get that
\begin{equation}
\left\|\mathbf{x}^{k+1}-\mathbf{x}^{k}\right\|_2^2
=\left\|\mathbf{y}^{k+1}-\mathbf{y}^{k}\right\|_2^2
+\left(\frac{2\beta}{\gamma}+\frac{\beta^2}{\gamma^2}\right)\left\|\mathbf{K}\left(\mathbf{y}^{k+1}-\mathbf{y}^{k}\right)\right\|_2^2.
\label{iter_analysis_F3W}
\end{equation}
Equation \eqref{iter_analysis_F3W} implies that
\begin{equation}
\left\|\mathbf{y}^{k+1}-\mathbf{y}^{k}\right\|_2^2
\leq\left\|\mathbf{x}^{k+1}-\mathbf{x}^{k}\right\|_2^2.
\label{iter_analysis_F3W**}
\end{equation}
Since $0<\frac{\beta}{\gamma} <\frac{\sqrt{5}-1}{2}$, there holds
\begin{eqnarray}
L= 1+\frac{\beta}{\gamma}<\frac{\gamma}{\beta}.
 \label{iter_analysis_betagammaW}
\end{eqnarray}
Combining inequalities \eqref{iter_analysis_F3W**} and \eqref{iter_analysis_betagammaW}  yields  inequality \eqref{iter_analysis_F2W}.


Substituting \eqref{iter_analysis_F1W} and \eqref{iter_analysis_F2W} into the right hand side of inequality \eqref{iter_analysis_1} yields
\begin{eqnarray}
F\left(\mathbf{x}^{k+1},\mathbf{y}^{k+1}\right)
\leq E\left(\mathbf{y}^{k}\right)+\frac{\gamma}{\gamma+\beta}\left<\nabla E\left(\mathbf{y}^{k}\right),\mathbf{x}^{k+1}-\mathbf{x}^{k}\right>
+\frac{\gamma}{2\beta}\left\|\mathbf{x}^{k+1}-\mathbf{x}^{k}
\right\|_2^2+\gamma\left\|\mathbf{x}^{k+1}\right\|_0.
\label{iter_analysis_1***}
\end{eqnarray}
To prove the first part of Item (i), it suffices to show
\begin{equation}
   \frac{\gamma}{\gamma+\beta}\left<\nabla E\left(\mathbf{y}^{k}\right),\mathbf{x}^{k+1}-\mathbf{x}^{k}\right>
+\frac{\gamma}{2\beta}\left\|\mathbf{x}^{k+1}-\mathbf{x}^{k}
\right\|_2^2+\gamma\left\|\mathbf{x}^{k+1}\right\|_0\leq \gamma\left\|\mathbf{x}^k\right\|_0.
\label{iter_analysis_last}
\end{equation}
To this end, we note that equation \eqref{iter_analysis_2} together with the second equation in Algorithm \eqref{eq:fixed-eqs_two_var_iter} leads to
$
\mathbf{y}^k=\mathbf{x}^k
-\frac{\beta}{\gamma+\beta}\nabla E\left(\mathbf{y}^{k}\right).
$
Substituting this equation into ${\rm prox}_{\beta\|\cdot\|_0}(\mathbf{y}^k)$ and using the definition of the proximity operator \eqref{def:prox_beta_ell0}, we may rewrite
the first part of Algorithm \eqref{eq:fixed-eqs_two_var_iter}
as
\begin{eqnarray}
\mathbf{x}^{k+1}\in {\rm argmin}
\left\{\frac{1}{2\beta}
\left\|\mathbf{x}-\mathbf{x}^k
+\frac{\beta}{\gamma+\beta}\nabla E\left(\mathbf{y}^{k}\right)\right\|_2^2+\left\|\mathbf{x}\right\|_0,\ \ \ \mathbf{x}\in\mathbb{R}^{N}\right\}.
\label{iter_analysis_3}
\end{eqnarray}
Expanding the quadratic term
in \eqref{iter_analysis_3} as
\begin{eqnarray*}
\frac{\beta}{2(\gamma+\beta)^2}\left\|\nabla E\left(\mathbf{y}^{k}\right)\right\|_2^2
+\frac{1}{\gamma+\beta}\left<\nabla E\left(\mathbf{y}^{k}\right),\mathbf{x}-\mathbf{x}^{k}\right>
+\frac{1}{2\beta}\left\|\mathbf{x}-\mathbf{x}^{k}\right\|_2^2
\end{eqnarray*}
and noticing that the first term of the above expression is constant with respect to $\mathbf{x}$, the inclusion relation \eqref{iter_analysis_3} becomes
\begin{eqnarray*}
\mathbf{x}^{k+1}\in {\rm argmin}
\left\{\frac{1}{\gamma+\beta}\left<\nabla E\left(\mathbf{y}^{k}\right),\mathbf{x}-\mathbf{x}^{k}\right>
+\frac{1}{2\beta}\left\|\mathbf{x}-\mathbf{x}^{k}\right\|_2^2+\left\|\mathbf{x}\right\|_0,\ \ \ \mathbf{x}\in\mathbb{R}^{N}\right\}.
\end{eqnarray*}
This ensures the validity of
\eqref{iter_analysis_last} and thus completes the proof of the first part of Item (i).

We next prove Item (ii). It follows from inequality \eqref{iter_analysis_last} that
\begin{eqnarray*}
E(\mathbf{y}^{k})+\frac{\gamma}{\gamma+\beta}\left<\nabla E(\mathbf{y}^{k}),\mathbf{x}^{k+1}-\mathbf{x}^{k}\right>
+\frac{\gamma}{2\beta}\left\|\mathbf{x}^{k+1}-\mathbf{x}^{k}
\right\|_2^2+\gamma\left\|\mathbf{x}^{k+1}\right\|_0\leq F\left(\mathbf{x}^{k},\mathbf{y}^{k}\right).
\end{eqnarray*}
In addition, from \eqref{iter_analysis_1} we get that
\begin{eqnarray*}
 -E(\mathbf{y}^{k})-\left<\nabla E(\mathbf{y}^{k}),\mathbf{y}^{k+1}-\mathbf{y}^{k}\right>
-\frac{L}{2}\left\|\mathbf{y}^{k+1}-\mathbf{y}^{k}
\right\|_2^2-\gamma\left\|\mathbf{x}^{k+1}\right\|_0
\leq -F\left(\mathbf{x}^{k+1},\mathbf{y}^{k+1}\right).
\end{eqnarray*}
Summing the above  two inequalities and noticing that  \eqref{iter_analysis_F1W} holds, we obtain that
\begin{eqnarray*}
\frac{\gamma}{2\beta}\left\|\mathbf{x}^{k+1}-\mathbf{x}^{k}\right\|_2^2
-\frac{L}{2}\left\|\mathbf{y}^{k+1}-\mathbf{y}^{k}
\right\|_2^2
\leq F(\mathbf{x}^{k},\mathbf{y}^{k})-F(\mathbf{x}^{k+1},\mathbf{y}^{k+1}).
\end{eqnarray*}
Again, since $0<\frac{\beta}{\gamma} <\frac{\sqrt{5}-1}{2}$,  inequality \eqref{iter_analysis_betagammaW} holds.  Substituting \eqref{iter_analysis_F3W}  into above inequality  yields
\begin{eqnarray}
&& 0\leq
\frac{1}{2}\left(\frac{\gamma}{\beta}-L\right)\left\|\mathbf{y}^{k+1}-\mathbf{y}^{k}\right\|_2^2
+\left(1+\frac{\beta}{2\gamma}\right)\left\|\mathbf{K}(\mathbf{y}^{k+1}-\mathbf{y}^{k})\right\|_2^2
\nonumber
\\
&&\quad
\leq F(\mathbf{x}^{k},\mathbf{y}^{k})-F(\mathbf{x}^{k+1},\mathbf{y}^{k+1}).
\label{eq:cov_xk_Kyk}
\end{eqnarray}
For $V\geq 1$, summing  inequality \eqref{eq:cov_xk_Kyk} from
$k=0$ to $V-1$ yields
\begin{eqnarray*}
0\leq\sum_{k=0}^{V-1}
\left[\frac{1}{2}\left(\frac{\gamma}{\beta}-L\right)\left\|\mathbf{y}^{k+1}-\mathbf{y}^{k}\right\|_2^2
+\left(1+\frac{\beta}{2\gamma}\right)\left\|\mathbf{K}\left(\mathbf{y}^{k+1}-\mathbf{y}^{k}
\right)\right\|_2^2\right]
\leq F(\mathbf{x}^{0},\mathbf{y}^{0})-F(\mathbf{x}^{V},\mathbf{y}^{V}).
\end{eqnarray*}
Noticing that the sequence $\{F(\mathbf{x}^{k},\mathbf{y}^{k})\}$ converges,  $\beta>0,\ \gamma>0$ and \eqref{iter_analysis_betagammaW}  holds,   letting $V\rightarrow\infty$, the following inequality is obtained
\begin{eqnarray*}
\sum_{k=0}^{+\infty}
\left[\frac{1}{2}\left(\frac{\gamma}{\beta}-L\right)\left\|\mathbf{y}^{k+1}-\mathbf{y}^{k}\right\|_2^2
+\left(1+\frac{\beta}{2\gamma}\right)\left\|\mathbf{K}\left(\mathbf{y}^{k+1}-\mathbf{y}^{k}\right)\right\|_2^2\right]
< +\infty.
\end{eqnarray*}
Hence, we have
\begin{eqnarray}
\sum_{k=0}^{+\infty}
\left\|\mathbf{y}^{k+1}-\mathbf{y}^{k}\right\|_2^2
< +\infty,
\ \ \ \ \ \
 \sum_{k=0}^{+\infty}
\left\|\mathbf{K}\left(\mathbf{y}^{k+1}-\mathbf{y}^{k}\right)\right\|_2^2
< +\infty.\label{ineq:y_k_conv}
\end{eqnarray}
This together with \eqref{iter_analysis_F3W} yields
\begin{eqnarray}
\sum_{k=0}^{+\infty}
\left\|\mathbf{x}^{k+1}-\mathbf{x}^{k}\right\|_2^2
< +\infty.
\label{ineq:x_k_conv}
\end{eqnarray}
Therefore,  Item (ii) is obtained.

Item (iii) is obtained directly from Item (ii).
\end{proof}

We next confirm the existence of the invariant support set of the sequence generated by Algorithm \eqref{eq:fixed-eqs_two_var_iter} for the non-convex model \eqref{model:two_variable}.

\begin{lemma}\label{lemma:x_sparsity_two}
Let $\left\{(\mathbf{x}^k,\mathbf{y}^k)\right\}$
be a sequence generated by algorithm \eqref{eq:fixed-eqs_two_var_iter} with an initial
$(\mathbf{x}^0,\mathbf{y}^0)\in\mathbb{R}^{N}\times\mathbb{R}^{N}$ for model \eqref{model:two_variable}.
If $\beta,\gamma$ are positive numbers with $0<\frac{\beta}{\gamma}<\frac{\sqrt{5}-1}{2}$, then there exists a $V>0$ such that
$\ N(\mathbf{x}^k)=N(\mathbf{x}^{V})$  for all $k\geq V$.
\end{lemma}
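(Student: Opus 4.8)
The plan is to combine the asymptotic-regularity conclusion of Theorem \ref{theorem:conv_analysis} with the intrinsic ``gap'' structure of the proximity operator of the $\ell_0$ norm recorded in Lemma \ref{lemma:x_sparsity}. The essential observation is that any change in the support of the iterates $\mathbf{x}^k$ forces a jump of size at least $\sqrt{2\beta}$ in consecutive iterates, so once the consecutive differences shrink below this threshold the support can no longer change. This is precisely the mechanism already used in the proof of Lemma \ref{lemma:support_xstar}, and the point worth noting is that it requires only the vanishing of consecutive differences, not convergence of the full sequence $\{\mathbf{x}^k\}$.

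First I would verify that the sequence $\{\mathbf{x}^k\}_{k\ge 1}$ produced by Algorithm \eqref{eq:fixed-eqs_two_var_iter} fits the hypotheses of Lemma \ref{lemma:x_sparsity}. The first line of \eqref{eq:fixed-eqs_two_var_iter} reads $\mathbf{x}^{k+1}\in\mathrm{prox}_{\beta\|\cdot\|_0}(\mathbf{y}^{k})$, so upon setting $\mathbf{z}^{k}:=\mathbf{y}^{k-1}$ for $k\ge 1$ we have $\mathbf{x}^{k}\in\mathrm{prox}_{\beta\|\cdot\|_0}(\mathbf{z}^{k})$ for every $k\ge 1$. Since Lemma \ref{lemma:x_sparsity} imposes no relation among the $\mathbf{z}^k$, this identification is legitimate, and Item (ii) of that lemma applies: whenever $N(\mathbf{x}^{k})\ne N(\mathbf{x}^{k+1})$ we must have $\|\mathbf{x}^{k+1}-\mathbf{x}^{k}\|_2\ge\sqrt{2\beta}$.

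Next I would invoke the hypothesis $0<\frac{\beta}{\gamma}<\frac{\sqrt{5}-1}{2}$, which is exactly the condition under which Theorem \ref{theorem:conv_analysis} holds. Item (iii) of that theorem gives $\lim_{k\to\infty}\|\mathbf{x}^{k+1}-\mathbf{x}^{k}\|_2=0$, so there is an integer $V>0$ with $\|\mathbf{x}^{k+1}-\mathbf{x}^{k}\|_2<\sqrt{2\beta}$ for all $k\ge V$. Taking the contrapositive of the gap estimate from the previous step, this inequality rules out $N(\mathbf{x}^{k})\ne N(\mathbf{x}^{k+1})$, whence $N(\mathbf{x}^{k})=N(\mathbf{x}^{k+1})$ for all $k\ge V$. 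A trivial induction on $k$ then yields $N(\mathbf{x}^{k})=N(\mathbf{x}^{V})$ for every $k\ge V$, which is the assertion.

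I do not expect a genuine obstacle here: the heavy lifting, namely establishing the summability \eqref{sum_x_y_ser} and therefore the asymptotic regularity $\|\mathbf{x}^{k+1}-\mathbf{x}^{k}\|_2\to 0$, was already carried out in Theorem \ref{theorem:conv_analysis}, and the discreteness of the $\ell_0$ proximity map does the rest. The only points demanding a little care are the bookkeeping that matches the algorithm's indexing to the generic statement of Lemma \ref{lemma:x_sparsity}, and the explicit check that no convergence of the full sequence $\{\mathbf{x}^k\}$ is invoked, so that this support-stabilization lemma can legitimately precede the later argument that $\{(\mathbf{x}^k,\mathbf{y}^k)\}$ actually converges.
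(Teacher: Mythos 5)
Your proposal is correct and follows exactly the paper's own argument: invoke Item (iii) of Theorem \ref{theorem:conv_analysis} to get $\|\mathbf{x}^{k+1}-\mathbf{x}^{k}\|_2<\sqrt{2\beta}$ for all large $k$, then apply Item (ii) of Lemma \ref{lemma:x_sparsity} to conclude the supports stabilize. The indexing remark (setting $\mathbf{z}^{k}:=\mathbf{y}^{k-1}$) matches the paper's implicit use of the same identification, so there is nothing to add.
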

\begin{proof}
 Item (iii) of Theorem \ref{theorem:conv_analysis} implies that there exists a number $V>0$ such that
\begin{eqnarray*}
\left\|\mathbf{x}^{k+1}-\mathbf{x}^{k}\right\|_2<\sqrt{2\beta},
\ \ \ \ {\rm for\ all} \ k\geq V.
\end{eqnarray*}
As $\mathbf{x}^{k}\in\mathrm{prox}_{\beta\|\cdot\|_0}
\left(\mathbf{y}^{k-1}\right)$, by  (ii) of Lemma \ref{lemma:x_sparsity}, sets $N(\mathbf{x}^k)$ for all $k\geq V$ must be identical.
\end{proof}

We next show that the sequence $\left\{(\mathbf{x}^k,\mathbf{y}^k)\right\}$ generated by Algorithm \eqref{eq:fixed-eqs_two_var_iter} with an initial
$(\mathbf{x}^0,\mathbf{y}^0)\in\mathbb{R}^{N}\times\mathbb{R}^{N}$ converges. We need the notion of the nonexpansive averaged operator.
\begin{definition}\label{def:nonexpansive_operator}
A nonlinear operator $P:\mathbb{R}^{N}\rightarrow\mathbb{R}^{N}$ is called nonexpansive if
$$
\left\|P\left(\mathbf{x}\right)-P\left(\mathbf{y}\right)\right\|_2
\leq\left\|\mathbf{x}-\mathbf{y}\right\|_2,
\ \ \  {\rm for\ all}\ \mathbf{x},\mathbf{y}\in\mathbb{R}^{N}.
$$
\end{definition}
\begin{definition}\label{def:nonexpansive_averaged_operator}
A nonlinear operator $P:\mathbb{R}^{N}\rightarrow\mathbb{R}^{N}$ is called nonexpansive averaged if
there are a number $\alpha\in\left(0,1\right)$ and a nonexpansive operator $S$ such that
$$
P=\alpha I+\left(1-\alpha\right)S,
$$
where $I$ is the identity operator. In this case, $P$ is called nonexpansive $\alpha$-averaged.
\end{definition}

For a nonlinear operator $P:\mathbb{R}^{N}\rightarrow\mathbb{R}^{N}$ and any vector  $\mathbf{x}^0\in \mathbb{R}^{N}$, the sequence $\mathbf{x}^{k+1}=P(\mathbf{x}^k)$, $k=0, 1, \dots$ is called a Picard sequence of $P$. It is known  \cite{BLT} that a Picard sequence of a nonexpansive averaged operator converges to a fixed-point of $P$. We state this result next.

\begin{theorem}\label{theorem:averaged_nonexp_con}
Let $\alpha\in(0,1)$, $P:\mathbb{R}^{N}\rightarrow\mathbb{R}^{N}$ be an $\alpha$-averaged nonexpansive operator such that
$Fix(P)\neq \emptyset$. Here, $Fix(P)$ denotes the set of fixed points of $P$. Then for any given $\mathbf{x}^0\in\mathbb{R}^{N}$, the Picard sequence $\left\{\mathbf{x}^k\right\}_{k\geq 1}$ of $P$ converges to a
$\mathbf{x}^*\in Fix(P)$.
\end{theorem}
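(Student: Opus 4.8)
The plan is to exploit the defining decomposition $P = \alpha I + (1-\alpha)S$ with $S$ nonexpansive and to run the classical Krasnosel'skii--Mann argument, which becomes especially clean in the finite-dimensional space $\mathbb{R}^{N}$. First I would record two elementary facts. Since $P\mathbf{x} = \mathbf{x}$ is equivalent to $(1-\alpha)(S\mathbf{x}-\mathbf{x}) = \mathbf{0}$ and $\alpha < 1$, we have $Fix(P) = Fix(S)$; in particular $Fix(S) \neq \emptyset$. Moreover, each Picard iterate satisfies $\mathbf{x}^{k+1} - \mathbf{x}^{k} = (1-\alpha)(S\mathbf{x}^{k} - \mathbf{x}^{k})$.

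The crucial step is a quadratic (Fej\'er-type) estimate relative to an arbitrary fixed point $\mathbf{z} \in Fix(P)$. Writing $\mathbf{x}^{k+1} - \mathbf{z} = \alpha(\mathbf{x}^{k}-\mathbf{z}) + (1-\alpha)(S\mathbf{x}^{k}-\mathbf{z})$ and applying the convex-combination identity
$$\left\|\alpha\mathbf{a}+(1-\alpha)\mathbf{b}\right\|_2^2 = \alpha\left\|\mathbf{a}\right\|_2^2 + (1-\alpha)\left\|\mathbf{b}\right\|_2^2 - \alpha(1-\alpha)\left\|\mathbf{a}-\mathbf{b}\right\|_2^2$$
with $\mathbf{a} = \mathbf{x}^{k}-\mathbf{z}$ and $\mathbf{b} = S\mathbf{x}^{k}-\mathbf{z}$, together with $\|S\mathbf{x}^{k}-\mathbf{z}\|_2 = \|S\mathbf{x}^{k}-S\mathbf{z}\|_2 \leq \|\mathbf{x}^{k}-\mathbf{z}\|_2$ and the relation $\|\mathbf{a}-\mathbf{b}\|_2 = \|S\mathbf{x}^{k}-\mathbf{x}^{k}\|_2 = (1-\alpha)^{-1}\|\mathbf{x}^{k+1}-\mathbf{x}^{k}\|_2$, I would obtain
$$\left\|\mathbf{x}^{k+1}-\mathbf{z}\right\|_2^2 \leq \left\|\mathbf{x}^{k}-\mathbf{z}\right\|_2^2 - \frac{\alpha}{1-\alpha}\left\|\mathbf{x}^{k+1}-\mathbf{x}^{k}\right\|_2^2.$$
This single inequality carries nearly all the weight of the proof.

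From it I would extract three consequences. First, $\{\|\mathbf{x}^{k}-\mathbf{z}\|_2\}$ is nonincreasing, hence convergent, so $\{\mathbf{x}^{k}\}$ is bounded. Second, telescoping over $k$ gives $\frac{\alpha}{1-\alpha}\sum_{k\geq 0}\|\mathbf{x}^{k+1}-\mathbf{x}^{k}\|_2^2 \leq \|\mathbf{x}^{0}-\mathbf{z}\|_2^2 < +\infty$, whence $\|\mathbf{x}^{k+1}-\mathbf{x}^{k}\|_2 \to 0$ and therefore $\|S\mathbf{x}^{k}-\mathbf{x}^{k}\|_2 \to 0$ (asymptotic regularity). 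Third, because $\mathbb{R}^{N}$ is finite-dimensional, the Bolzano--Weierstrass theorem supplies a subsequence $\mathbf{x}^{k_j}\to\mathbf{x}^{\star}$; since a nonexpansive $S$ is Lipschitz continuous, $S\mathbf{x}^{k_j}\to S\mathbf{x}^{\star}$, and combining this with $\|S\mathbf{x}^{k_j}-\mathbf{x}^{k_j}\|_2\to 0$ forces $S\mathbf{x}^{\star}=\mathbf{x}^{\star}$, i.e. $\mathbf{x}^{\star}\in Fix(S) = Fix(P)$.

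Finally I would upgrade subsequential convergence to full convergence. Applying the Fej\'er inequality with the particular reference point $\mathbf{z} := \mathbf{x}^{\star}$ shows that $\{\|\mathbf{x}^{k}-\mathbf{x}^{\star}\|_2\}$ is convergent; since its subsequence $\{\|\mathbf{x}^{k_j}-\mathbf{x}^{\star}\|_2\}$ tends to $0$, the whole sequence must tend to $0$, that is $\mathbf{x}^{k}\to\mathbf{x}^{\star}\in Fix(P)$. The step demanding the most care is the derivation of the quadratic estimate and the correct bookkeeping of the constant $\alpha/(1-\alpha)$; once it is in place the remaining arguments are routine in $\mathbb{R}^{N}$, the only genuinely conceptual point being the standard upgrade from subsequential to full convergence via re-use of Fej\'er monotonicity with the limit as the reference point.
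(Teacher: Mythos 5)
Your argument is correct. Note, however, that the paper does not prove this theorem at all: it is quoted as a known result with a citation to Borwein, Li and Tam, so there is no ``paper proof'' to match against. What you have written is the standard self-contained Krasnosel'skii--Mann argument, and every step checks out under the paper's (slightly nonstandard) convention $P=\alpha I+(1-\alpha)S$: the identity $Fix(P)=Fix(S)$, the convex-combination norm identity, the resulting Fej\'er-type inequality $\|\mathbf{x}^{k+1}-\mathbf{z}\|_2^2\leq\|\mathbf{x}^{k}-\mathbf{z}\|_2^2-\tfrac{\alpha}{1-\alpha}\|\mathbf{x}^{k+1}-\mathbf{x}^{k}\|_2^2$ with the constant bookkept correctly, the telescoping to get asymptotic regularity, and the finite-dimensional compactness step followed by re-use of Fej\'er monotonicity with the cluster point as reference. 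The trade-off between the two routes is mild: the cited reference works in general Hilbert space and additionally yields convergence rates, whereas your proof is elementary and leans on Bolzano--Weierstrass, which is available since the ambient space is $\mathbb{R}^{N}$ and is all the paper actually needs. (In infinite dimensions you would instead invoke demiclosedness of $I-S$ and obtain only weak convergence, so it is worth being aware that the compactness step is the one place where finite-dimensionality is genuinely used.)
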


Convergence of the sequence $\left\{(\mathbf{x}^k,\mathbf{y}^k)\right\}$ generated by Algorithm \eqref{eq:fixed-eqs_two_var_iter} will be analyzed by showing that the sequence $\left\{\mathbf{x}^k\right\}$ is a Picard sequence of a nonexpansive averaged operator which is the composition of three operators.
To this end, we need a lemma regarding the composition of two nonexpansive averaged operators \cite{CY}.

\begin{lemma}\label{lemma:compos_nonexp_ave_op}
If $P_1$ is nonexpansive $\alpha_1$-averaged and $P_2$ is nonexpansive $\alpha_2$-averaged, then
$P_1\circ P_2$ is nonexpansive $\alpha_1\alpha_2$-averaged.
\end{lemma}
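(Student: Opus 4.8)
The plan is to prove the result by a direct decomposition, reading off the averaging parameter from an explicit formula rather than appealing to any external characterization. By Definition~\ref{def:nonexpansive_averaged_operator}, write $P_1=\alpha_1 I+(1-\alpha_1)S_1$ and $P_2=\alpha_2 I+(1-\alpha_2)S_2$ with $S_1,S_2$ nonexpansive and $\alpha_1,\alpha_2\in(0,1)$. First I would substitute the expression for $P_2$ into $P_1\circ P_2$ and expand, obtaining for each $\mathbf{x}\in\mathbb{R}^N$ that
\begin{align*}
P_1(P_2(\mathbf{x}))&=\alpha_1 P_2(\mathbf{x})+(1-\alpha_1)S_1(P_2(\mathbf{x}))\\
&=\alpha_1\alpha_2\,\mathbf{x}+\alpha_1(1-\alpha_2)S_2(\mathbf{x})+(1-\alpha_1)(S_1\circ P_2)(\mathbf{x}).
\end{align*}
Reading off the coefficient of $\mathbf{x}$ isolates the identity part $\alpha_1\alpha_2 I$, which already matches the claimed averaging constant, and leaves the residual operator $T:=\alpha_1(1-\alpha_2)S_2+(1-\alpha_1)(S_1\circ P_2)$ to be identified as $(1-\alpha_1\alpha_2)S$ for a suitable nonexpansive $S$.

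The key step is to verify that the residual operator carries the correct convex-combination structure. I would set $S:=T/(1-\alpha_1\alpha_2)$ and check that its two coefficients, $\alpha_1(1-\alpha_2)/(1-\alpha_1\alpha_2)$ and $(1-\alpha_1)/(1-\alpha_1\alpha_2)$, are nonnegative and sum to exactly $1$; indeed their numerators add to $\alpha_1-\alpha_1\alpha_2+1-\alpha_1=1-\alpha_1\alpha_2$. Thus $S$ is a genuine convex combination of $S_2$ and $S_1\circ P_2$, and $P_1\circ P_2=\alpha_1\alpha_2 I+(1-\alpha_1\alpha_2)S$.

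It then remains to confirm that $S$ is nonexpansive and that the averaging parameter is admissible. The composition $S_1\circ P_2$ is nonexpansive, since every $\alpha$-averaged operator is nonexpansive (an immediate consequence of the triangle inequality applied to its defining decomposition), whence $\|S_1(P_2(\mathbf{x}))-S_1(P_2(\mathbf{y}))\|_2\le\|P_2(\mathbf{x})-P_2(\mathbf{y})\|_2\le\|\mathbf{x}-\mathbf{y}\|_2$ by Definition~\ref{def:nonexpansive_operator}. A convex combination of nonexpansive operators is again nonexpansive by the triangle inequality, so $S$ is nonexpansive; and since $\alpha_1,\alpha_2\in(0,1)$ we have $\alpha_1\alpha_2\in(0,1)$. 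Hence $P_1\circ P_2$ satisfies Definition~\ref{def:nonexpansive_averaged_operator} with parameter $\alpha_1\alpha_2$, as claimed.

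I do not anticipate a serious obstacle, as the argument is purely algebraic bookkeeping. The one point that must be handled with care is confirming that the coefficient of the residual term equals \emph{precisely} $1-\alpha_1\alpha_2$, rather than merely bounding it: it is this exact cancellation that makes $S$ a true convex combination and therefore nonexpansive. A careless grouping would yield only a Lipschitz bound with constant exceeding one and would break the averaging identity, so the verification that the two coefficients sum to $1$ is the load-bearing computation of the proof.
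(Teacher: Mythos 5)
Your proof is correct. Note, however, that the paper does not actually prove this lemma: it is quoted from the reference \cite{CY} (Combettes and Yamada), so your argument is a self-contained replacement for a citation rather than a parallel to an in-paper proof. Your route --- substituting $P_2=\alpha_2 I+(1-\alpha_2)S_2$ into $P_1\circ P_2$, isolating the identity part $\alpha_1\alpha_2 I$, and checking that the residual is $(1-\alpha_1\alpha_2)$ times a convex combination of the nonexpansive operators $S_2$ and $S_1\circ P_2$ --- is elementary and delivers exactly the constant $\alpha_1\alpha_2$ asserted in the lemma, which is all the paper needs (Lemma \ref{composition-operator} only requires \emph{some} averaging parameter in $(0,1)$ so that Theorem \ref{theorem:averaged_nonexp_con} applies). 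The cited result of Combettes--Yamada is sharper: in the paper's convention $P=\alpha I+(1-\alpha)S$, the optimal averaging constant for the composition is $\alpha_1\alpha_2/(\alpha_1+\alpha_2-\alpha_1\alpha_2)$, which dominates $\alpha_1\alpha_2$; obtaining it requires the quadratic (firm-nonexpansiveness-type) characterization of averaged operators rather than the triangle inequality. Since an operator that is $\alpha$-averaged in this convention is automatically $\alpha'$-averaged for every $\alpha'\in(0,\alpha]$, the weaker constant you obtain is consistent with the sharp one, and your identification of the exact cancellation $\alpha_1(1-\alpha_2)+(1-\alpha_1)=1-\alpha_1\alpha_2$ as the load-bearing step is exactly right.
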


We consider three operators involved in Algorithm \eqref{eq:fixed-eqs_two_var_iter}.  Lemma \ref{lemma:x_sparsity_two} allows us to identify a positive integer $V$ such that $N(\mathbf{x}^k)=N(\mathbf{x}^{V})$ for all $k\geq V$. If we set $\mathcal{N}:=N(\mathbf{x}^{V})$, then the next lemma shows that the projection $\mathrm{P}_{\mathcal{B}_{\mathcal{N}}}$ defined by \eqref{P_B} is a nonexpansive averaged operator.

\begin{lemma}\label{lemma:P_N_alpha1_ave}
If $\mathcal{N}:=N(\mathbf{x}^{V})$, then the  projection $\mathrm{P}_{\mathcal{B}_{\mathcal{N}}}$ defined by \eqref{P_B} is nonexpansive $\frac{1}{2}$-averaged.
\end{lemma}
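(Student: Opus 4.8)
The plan is to exhibit $\mathrm{P}_{\mathcal{B}_{\mathcal{N}}}$ in the form required by Definition \ref{def:nonexpansive_averaged_operator} with $\alpha=\tfrac12$, namely as $\tfrac12 I+\tfrac12 S$ for a suitable nonexpansive operator $S$. The natural candidate is the reflection $S:=2\mathrm{P}_{\mathcal{B}_{\mathcal{N}}}-I$, so that $\mathrm{P}_{\mathcal{B}_{\mathcal{N}}}=\tfrac12 I+\tfrac12 S$ holds by construction; it then suffices to prove that $S$ is nonexpansive. One may rely on Lemma \ref{P_BN_L2}, which identifies $\mathrm{P}_{\mathcal{B}_{\mathcal{N}}}$ with the orthogonal projection onto the closed convex set $\mathcal{B}_{\mathcal{N}}$, but in fact the explicit componentwise formula \eqref{P_B} already makes the whole argument elementary.

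Concretely, I would use \eqref{P_B} to compute $S$ coordinatewise: for $\mathbf{y}\in\mathbb{R}^{N}$ one has $(S\mathbf{y})_i=2y_i-y_i=y_i$ when $i\in\mathcal{N}$ and $(S\mathbf{y})_i=0-y_i=-y_i$ when $i\notin\mathcal{N}$, so that $S$ merely negates the components indexed by $\overline{\mathcal{N}}$ and leaves those indexed by $\mathcal{N}$ untouched. Consequently, for arbitrary $\mathbf{x},\mathbf{y}\in\mathbb{R}^{N}$, splitting the squared norm over the disjoint index sets $\mathcal{N}$ and $\overline{\mathcal{N}}$ gives
\begin{equation*}
\left\|S\mathbf{x}-S\mathbf{y}\right\|_2^2=\sum_{i\in\mathcal{N}}\left|x_i-y_i\right|^2+\sum_{i\notin\mathcal{N}}\left|x_i-y_i\right|^2=\left\|\mathbf{x}-\mathbf{y}\right\|_2^2.
\end{equation*}
Thus $S$ is in fact an isometry, and in particular nonexpansive in the sense of Definition \ref{def:nonexpansive_operator}. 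Substituting this $S$ into $\mathrm{P}_{\mathcal{B}_{\mathcal{N}}}=\tfrac12 I+\tfrac12 S$ shows that $\mathrm{P}_{\mathcal{B}_{\mathcal{N}}}$ is nonexpansive $\tfrac12$-averaged, as claimed.

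There is essentially no hard step here: the only point requiring care is choosing the correct averaging operator $S$. The identity $\mathrm{P}_{\mathcal{B}_{\mathcal{N}}}=\tfrac12 I+\tfrac12\left(2\mathrm{P}_{\mathcal{B}_{\mathcal{N}}}-I\right)$ is the familiar reflection form underlying firm nonexpansiveness, and one could alternatively verify directly that $\left\|\mathrm{P}_{\mathcal{B}_{\mathcal{N}}}\mathbf{x}-\mathrm{P}_{\mathcal{B}_{\mathcal{N}}}\mathbf{y}\right\|_2^2=\left\langle \mathrm{P}_{\mathcal{B}_{\mathcal{N}}}\mathbf{x}-\mathrm{P}_{\mathcal{B}_{\mathcal{N}}}\mathbf{y},\,\mathbf{x}-\mathbf{y}\right\rangle$ (firm nonexpansiveness of an orthogonal projection) and invoke the standard equivalence between firm nonexpansiveness and $\tfrac12$-averagedness; but the reflection computation above is self-contained and avoids citing that equivalence.
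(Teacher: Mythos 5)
Your proposal is correct and follows essentially the same route as the paper: the reflection $S=2\mathrm{P}_{\mathcal{B}_{\mathcal{N}}}-I$ you construct is exactly the paper's diagonal matrix $\tilde{\mathbf{S}}_{\mathcal{N}}$ with entries $1$ on $\mathcal{N}$ and $-1$ on $\overline{\mathcal{N}}$, and your coordinatewise isometry computation is just a spelled-out version of the paper's observation that $\|\tilde{\mathbf{S}}_{\mathcal{N}}\|_2=1$. No gaps.
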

\begin{proof}
For the index set $\mathcal{N}$, we let
$
\mathbf{S}_\mathcal{N}~:={\rm diag}\left(s_1,s_2,\ldots,s_N\right),
$
where $s_j:=1$ if $j\in\mathcal{N}$ and $s_j:=0$ otherwise.
We observe that $\mathrm{P}_{\mathcal{B}_{\mathcal{N}}}\left(\mathbf{x}\right)
=\mathbf{S}_\mathcal{N}\mathbf{x}$, for all $\mathbf{x}\in\mathbb{R}^{N}$.
We define another matrix
$
\tilde{\mathbf{S}}_\mathcal{N}~:={\rm diag}\left(\tilde{s}_1,\tilde{s}_2,\ldots,\tilde{s}_N\right),
$
where
$\tilde{s}_j~:= 1$, if $j\in\mathcal{N}$ and $\tilde{s}_j~:= -1$,  otherwise.
It can be verified that
$
\mathbf{S}_\mathcal{N}=\frac{1}{2}\mathbf{I}+\left(1-\frac{1}{2}\right)\tilde{\mathbf{S}}_\mathcal{N}.
$
Clearly, $\|\tilde{\mathbf{S}}_\mathcal{N}\|_2= 1$,
which implies that $\tilde{\mathbf{S}}_\mathcal{N}$ is nonexpansive. By the definition of nonexpansive averaged operators, $\mathbf{S}_\mathcal{N}$ is  nonexpansive $\frac{1}{2}$-averaged. In other words, the  projection $\mathrm{P}_{\mathcal{B}_{\mathcal{N}}}$  is  nonexpansive $\frac{1}{2}$-averaged.
\end{proof}

Lemma \ref{lemma:y_existence} shows that for $\beta>0,\ \gamma>0$, the matrix $\mathbf{I}+\frac{\beta}{\gamma}\mathbf{K}^*\mathbf{K}$ is invertible. We next show that the matrix $\left(\mathbf{I}+\frac{\beta}{\gamma}\mathbf{K}^*\mathbf{K}\right)^{-1}$ is nonexpansive $\frac{1}{2}$-averaged.

\begin{lemma}\label{lemma:IK*K_alpha2_ave}
If $\beta, \gamma>0$ with $0<\frac{\beta}{\gamma}<1$, then the  matrix $\left(\mathbf{I}+\frac{\beta}{\gamma}\mathbf{K}^*\mathbf{K}\right)^{-1}$ is nonexpansive $\frac{1}{2}$-averaged.
\end{lemma}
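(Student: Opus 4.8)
The plan is to mirror the argument already used in the proof of Lemma~\ref{lemma:P_N_alpha1_ave}: I would exhibit the matrix $\mathbf{P}:=\left(\mathbf{I}+\frac{\beta}{\gamma}\mathbf{K}^*\mathbf{K}\right)^{-1}$ in the canonical $\frac12$-averaged form $\mathbf{P}=\frac12\mathbf{I}+\frac12\mathbf{S}$ and then verify that the residual matrix $\mathbf{S}:=2\mathbf{P}-\mathbf{I}$ is nonexpansive, i.e. $\|\mathbf{S}\|_2\le 1$. Once this is done, Definition~\ref{def:nonexpansive_averaged_operator} immediately yields that $\mathbf{P}$ is nonexpansive $\frac12$-averaged, which is exactly the assertion.

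The key step is purely spectral. By Lemma~\ref{lemma:K*K_eigvalue} the matrix $\mathbf{M}:=\mathbf{K}^*\mathbf{K}$ is real, and since $(\mathbf{K}^*\mathbf{K})^*=\mathbf{K}^*\mathbf{K}$ it is symmetric; hence there is an orthogonal matrix $\mathbf{U}$ with $\mathbf{M}=\mathbf{U}\mathbf{D}\mathbf{U}^\top$, where $\mathbf{D}$ is diagonal with entries the eigenvalues $\mu_j\in\{0,1\}$ recorded in \eqref{eq:K*K_eigvalue}. Because $\mathbf{I}$, $\mathbf{P}$ and $\mathbf{S}$ are all rational functions of $\mathbf{M}$, they are simultaneously diagonalized by the same $\mathbf{U}$, so the eigenvalues of $\mathbf{S}$ are obtained by applying the scalar map $\mu\mapsto \frac{2}{1+\frac{\beta}{\gamma}\mu}-1$ to each $\mu_j$. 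I would then carry out the one-line computation: for $\mu=0$ the map returns $1$, while for $\mu=1$ it returns $\frac{2}{1+\beta/\gamma}-1=\frac{\gamma-\beta}{\gamma+\beta}$. Since $\beta,\gamma>0$ we have $|\gamma-\beta|<\gamma+\beta$, so $\left|\frac{\gamma-\beta}{\gamma+\beta}\right|<1$ (and under $0<\frac{\beta}{\gamma}<1$ this eigenvalue in fact lies in $(0,1)$). Thus every eigenvalue of the symmetric matrix $\mathbf{S}$ has modulus at most $1$, whence $\|\mathbf{S}\|_2=1\le 1$ and $\mathbf{S}$ is nonexpansive; writing $\mathbf{P}=\frac12\mathbf{I}+\frac12\mathbf{S}$ then finishes the argument.

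I do not anticipate a genuine obstacle, as the reasoning is entirely spectral; the only points requiring care are justifying that $\mathbf{M}$ is symmetric (so that the spectral theorem and the identity $\|\mathbf{S}\|_2=\rho(\mathbf{S})$ for symmetric $\mathbf{S}$ both apply) and confirming that the nontrivial eigenvalue of $\mathbf{S}$ remains in $[-1,1]$. It is worth remarking that nonexpansiveness of $\mathbf{S}$ actually holds for all $\beta,\gamma>0$; the sharper restriction $0<\frac{\beta}{\gamma}<1$ is needed only when $\mathbf{P}$ is subsequently composed with the other two averaged operators of Algorithm \eqref{eq:fixed-eqs_two_var_iter} by means of Lemma~\ref{lemma:compos_nonexp_ave_op}.
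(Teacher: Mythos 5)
Your proposal is correct and follows essentially the same route as the paper: both write $\left(\mathbf{I}+\frac{\beta}{\gamma}\mathbf{K}^*\mathbf{K}\right)^{-1}=\frac{1}{2}\mathbf{I}+\frac{1}{2}\mathbf{S}$ and reduce nonexpansiveness of $\mathbf{S}$ to the spectral information in Lemma \ref{lemma:K*K_eigvalue}; the paper bounds $\|\mathbf{S}\|_2$ by factoring $\left(\mathbf{I}+\frac{\beta}{\gamma}\mathbf{K}^*\mathbf{K}\right)^{-1}-\frac{1}{2}\mathbf{I}$ into a product of two symmetric matrices each of norm at most one, whereas you diagonalize $\mathbf{K}^*\mathbf{K}$ once and read off the eigenvalues $1$ and $\frac{\gamma-\beta}{\gamma+\beta}$ of $\mathbf{S}$ directly, a minor computational variant of the same argument. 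Your closing remark is also accurate: since $\left|\frac{\gamma-\beta}{\gamma+\beta}\right|<1$ for all $\beta,\gamma>0$, the hypothesis $0<\frac{\beta}{\gamma}<1$ is not actually needed for this particular lemma.
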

\begin{proof}
We write $\left(\mathbf{I}+\frac{\beta}{\gamma}\mathbf{K}^*\mathbf{K}\right)^{-1}
=\frac{1}{2}\mathbf{I}+\left(1-\frac{1}{2}\right)\mathbf{S}$ with
$\mathbf{S}~:=2\left(\mathbf{I}+\frac{\beta}{\gamma}\mathbf{K}^*
\mathbf{K}\right)^{-1}-\mathbf{I}$. It suffices to show that $\mathbf{S}$
is nonexpansive. For all $\mathbf{x},\mathbf{y}\in\mathbb{R}^{N}$, we have that
\begin{equation}
    \left\|\mathbf{Sx}-\mathbf{Sy}\right\|_2\leq 2\left\|\left(\mathbf{I}+\frac{\beta}{\gamma}\mathbf{K}^*
\mathbf{K}\right)^{-1}
-\frac{1}{2}\mathbf{I}\right\|_2\left\|\mathbf{x}-\mathbf{y}\right\|_2.
\label{equ_Sx_Sy}
\end{equation}
Note that
\begin{eqnarray}\label{eqn_IKK}
\left(\mathbf{I}+\frac{\beta}{\gamma}\mathbf{K}^*\mathbf{K}\right)^{-1}
-\frac{1}{2}\mathbf{I}
=\frac{1}{2}\left(\mathbf{I}+\frac{\beta}{\gamma}
\mathbf{K}^*\mathbf{K}\right)^{-1}
\left(\mathbf{I}-\frac{\beta}{\gamma}\mathbf{K}^*\mathbf{K}\right).
\end{eqnarray}
According to Lemma \ref{lemma:K*K_eigvalue}, simple computation leads to
the eigenvalues  of $\left(\mathbf{I}+\frac{\beta}{\gamma}\mathbf{K}^*\mathbf{K}\right)^{-1}$ are either $\mu=1$ or $\mu=\frac{\gamma}{\beta+\gamma}$. Likewise, we obtain the eigenvalues of $\left(\mathbf{I}-\frac{\beta}{\gamma}\mathbf{K}^*\mathbf{K}\right)$ are $\iota=1$ or $\iota=1-\frac{\beta}{\gamma}$.
Moreover, both matrix $\left(\mathbf{I}+\frac{\beta}{\gamma}\mathbf{K}^*\mathbf{K}\right)^{-1}$ and $\left(\mathbf{I}-\frac{\beta}{\gamma}\mathbf{K}^*\mathbf{K}\right)$ are symmetric. Therefore,
\begin{eqnarray}\label{eig_IKK}
 \left\|\left(\mathbf{I}+\frac{\beta}{\gamma}\mathbf{K}^*\mathbf{K}\right)^{-1}
 \right\|_2\leq \max\left\{1,\frac{\gamma}{\beta+\gamma}\right\}\leq 1
\end{eqnarray}
and
\begin{eqnarray}\label{eig_alpha_IKK}
\left\|\mathbf{I}-\frac{\beta}{\gamma}\mathbf{K}^*\mathbf{K}
 \right\|_2\leq \max\left\{1,1-\frac{\beta}{\gamma}\right\}\leq 1.
\end{eqnarray}
Combining \eqref{eqn_IKK},  \eqref{eig_IKK} and \eqref{eig_alpha_IKK} yields
$$
 \left\|\left(\mathbf{I}+\frac{\beta}{\gamma}\mathbf{K}^*\mathbf{K}\right)^{-1}
-\frac{1}{2}\mathbf{I}\right\|_2\leq \frac{1}{2}.
$$
Substituting the above inequality into inequality \eqref{equ_Sx_Sy} leads to
$
\left\|\mathbf{S}\left(\mathbf{x}\right)-\mathbf{S}\left(\mathbf{y}\right)\right\|_2
\leq\left\|\mathbf{x}-\mathbf{y}\right\|_2,
$
that is, $\mathbf{S}$ is nonexpansive. Therefore, $\left(\mathbf{I}+\frac{\beta}{\gamma}\mathbf{K}^*\mathbf{K}\right)^{-1}$  is nonexpansive $\frac{1}{2}$-averaged.
\end{proof}

We now consider the translation operator defined by $H\mathbf{x}~:=\mathbf{x}+\frac{\beta}{\gamma}\mathbf{K}^*\mathbf{r}$, for all $\mathbf{x}\in\mathbb{R}^{N}$.

\begin{lemma}\label{lemma:H_aver_operator}
The operator $H$ is nonexpansive $\alpha$-averaged for any $\alpha\in (0,1)$. \end{lemma}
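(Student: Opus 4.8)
The plan is to exploit the fact that $H$ is nothing more than a translation by the fixed vector $\mathbf{c}:=\frac{\beta}{\gamma}\mathbf{K}^{*}\mathbf{r}$, so that $H\mathbf{x}=\mathbf{x}+\mathbf{c}$ for all $\mathbf{x}\in\mathbb{R}^{N}$. Translations are isometries, hence a fortiori nonexpansive, and this rigid structure is what makes $H$ averaged with an arbitrary averaging constant. Concretely, to verify Definition \ref{def:nonexpansive_averaged_operator} for a prescribed $\alpha\in(0,1)$, I would simply solve the defining relation $H=\alpha I+(1-\alpha)S$ for the operator $S$ and then check that the resulting $S$ is nonexpansive.

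First I would fix an arbitrary $\alpha\in(0,1)$ and define
\begin{eqnarray*}
S~:=\frac{1}{1-\alpha}\left(H-\alpha I\right).
\end{eqnarray*}
A direct computation gives, for all $\mathbf{x}\in\mathbb{R}^{N}$,
\begin{eqnarray*}
S\mathbf{x}=\frac{1}{1-\alpha}\left(\mathbf{x}+\mathbf{c}-\alpha\mathbf{x}\right)
=\mathbf{x}+\frac{1}{1-\alpha}\mathbf{c},
\end{eqnarray*}
so that $S$ is again a translation, now by the vector $\frac{1}{1-\alpha}\mathbf{c}$. Consequently $\left\|S\mathbf{x}-S\mathbf{y}\right\|_2=\left\|\mathbf{x}-\mathbf{y}\right\|_2$ for all $\mathbf{x},\mathbf{y}\in\mathbb{R}^{N}$, which shows by Definition \ref{def:nonexpansive_operator} that $S$ is nonexpansive.

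Finally I would observe that by the very choice of $S$ one has $H=\alpha I+(1-\alpha)S$ with $S$ nonexpansive and $\alpha\in(0,1)$, so $H$ is nonexpansive $\alpha$-averaged; since $\alpha\in(0,1)$ was arbitrary, the claim holds for every such $\alpha$. There is essentially no obstacle here: the only point that needs a moment's care is the logical reading of the statement, namely that the averaging property must be established for \emph{every} $\alpha\in(0,1)$ rather than for a single one, which is precisely why the construction of $S$ is carried out with a generic $\alpha$ left as a free parameter. This lemma, together with Lemmas \ref{lemma:P_N_alpha1_ave} and \ref{lemma:IK*K_alpha2_ave} and the composition result of Lemma \ref{lemma:compos_nonexp_ave_op}, will then supply the averaged-operator factorization needed to invoke Theorem \ref{theorem:averaged_nonexp_con} in the convergence argument.
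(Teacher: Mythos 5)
Your proposal is correct and follows essentially the same route as the paper: the paper also writes $H=\alpha I+(1-\alpha)S$ with $S\mathbf{x}=\mathbf{x}+\frac{\beta}{(1-\alpha)\gamma}\mathbf{K}^{*}\mathbf{r}$, which is exactly the translation operator you obtain by solving for $S$, and then notes that $S$ is an isometry hence nonexpansive. The only cosmetic difference is that you derive $S$ from the defining relation whereas the paper states it directly.
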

\begin{proof}
We write
$H=\alpha I+\left(1-\alpha\right)S$, for any fixed $\alpha\in (0,1)$, with
$$
S\mathbf{x}~:=\mathbf{x}+\frac{\beta}{\left(1-\alpha\right)\gamma}\mathbf{K}^*
\mathbf{r}, \ \ \mbox{for all}\ \ \mathbf{x}\in\mathbb{R}^{N}.
$$
Clearly, for all $\mathbf{x},\mathbf{y}\in\mathbb{R}^{N}$, we observe that
$\left\|S\mathbf{x}-S\mathbf{y}\right\|_2=\|\mathbf{x}-\mathbf{y}\|_2$,
which implies that $S$ is nonexpansive. Hence, $H$ is nonexpansive $\alpha$-averaged.
\end{proof}

For an index set $\mathcal{N}$ of $\{1,2,\dots, N\}$, we define
$$
Q_{\mathcal{N}}:=\mathrm{P}_{\mathcal{B}_{\mathcal{N}}}\circ\left(\mathbf{I}+\frac{\beta}{\gamma}\mathbf{K}^*\mathbf{K}\right)^{-1}\circ H.
$$

\begin{lemma}\label{composition-operator}
The operator $Q_{\mathcal{N}}$ is  nonexpansive $\frac{1}{4}\alpha$-averaged, for any $\alpha\in (0,1)$.
\end{lemma}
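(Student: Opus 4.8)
The plan is to recognize $Q_{\mathcal{N}}$ as a composition of three operators, each of which has already been shown to be nonexpansive averaged in the preceding lemmas, and then to invoke the composition rule for averaged operators (Lemma \ref{lemma:compos_nonexp_ave_op}) twice. No new estimate is needed; the entire argument is a matter of chaining together results already in hand.

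First I would assemble the three ingredients. Lemma \ref{lemma:P_N_alpha1_ave} gives that $\mathrm{P}_{\mathcal{B}_{\mathcal{N}}}$ is nonexpansive $\frac{1}{2}$-averaged; Lemma \ref{lemma:IK*K_alpha2_ave} gives that $\left(\mathbf{I}+\frac{\beta}{\gamma}\mathbf{K}^*\mathbf{K}\right)^{-1}$ is nonexpansive $\frac{1}{2}$-averaged, which is applicable because the standing hypothesis $0<\frac{\beta}{\gamma}<\frac{\sqrt{5}-1}{2}$ forces $0<\frac{\beta}{\gamma}<1$; and Lemma \ref{lemma:H_aver_operator} gives that $H$ is nonexpansive $\alpha$-averaged for any $\alpha\in(0,1)$.

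Next I would apply Lemma \ref{lemma:compos_nonexp_ave_op} to the composition $\mathrm{P}_{\mathcal{B}_{\mathcal{N}}}\circ\left(\mathbf{I}+\frac{\beta}{\gamma}\mathbf{K}^*\mathbf{K}\right)^{-1}$, obtaining an operator that is nonexpansive $\frac{1}{2}\cdot\frac{1}{2}=\frac{1}{4}$-averaged. Applying the same lemma once more to compose this intermediate operator with $H$ on the right yields that $Q_{\mathcal{N}}$ is nonexpansive $\frac{1}{4}\cdot\alpha=\frac{1}{4}\alpha$-averaged, which is exactly the claimed conclusion.

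There is no genuine obstacle here: the statement is a bookkeeping exercise that threads the three preceding lemmas through the composition rule. The only point deserving a moment's attention is confirming that the parameter regime assumed throughout the convergence analysis supplies the hypothesis $0<\frac{\beta}{\gamma}<1$ required by Lemma \ref{lemma:IK*K_alpha2_ave}; since $\frac{\sqrt{5}-1}{2}<1$, this verification is immediate.
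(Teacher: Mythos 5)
Your proposal is correct and follows exactly the paper's argument: the paper likewise obtains the result by feeding Lemmas \ref{lemma:P_N_alpha1_ave}, \ref{lemma:IK*K_alpha2_ave} and \ref{lemma:H_aver_operator} into the composition rule of Lemma \ref{lemma:compos_nonexp_ave_op}. Your added remark checking that $0<\frac{\beta}{\gamma}<\frac{\sqrt{5}-1}{2}<1$ supplies the hypothesis of Lemma \ref{lemma:IK*K_alpha2_ave} is a sensible (and correct) bit of extra care that the paper leaves implicit.
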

\begin{proof}
This result follows directly from Lemma \ref{lemma:compos_nonexp_ave_op} with Lemmas \ref{lemma:P_N_alpha1_ave}, \ref{lemma:IK*K_alpha2_ave} and \ref{lemma:H_aver_operator}.
\end{proof}

Algorithm \eqref{eq:fixed-eqs_two_var_iter} is essentially solving the convex optimization model \eqref{model:convex_model} with$\ \mathcal{N}:=N(\mathbf{x}^{V})$, for $k\geq V$. We confirm this fact in the next lemma.

\begin{lemma}\label{theorem:conv_analysis_app}
Let $\left\{(\mathbf{x}^k,\mathbf{y}^k)\right\}$ be a sequence generated by Algorithm \eqref{eq:fixed-eqs_two_var_iter} with an initial
$(\mathbf{x}^0,\mathbf{y}^0)\in\mathbb{R}^{N}\times\mathbb{R}^{N}$.  Let $V$ be a positive integer such that $N(\mathbf{x}^k)=N(\mathbf{x}^{V})$  for all $k\geq V$.
If  $\beta,\gamma>0$ are chosen such that $0<\frac{\beta}{\gamma} <\frac{\sqrt{5}-1}{2}$,
then the subsequence $\left\{(\mathbf{x}^k,\mathbf{y}^k)\right\}_{k\geq V}$  converges to a solution
$(\mathbf{x}^{\star},\mathbf{y}^{\star})$ of the convex model \eqref{model:convex_model} with $\mathcal{N}:=N(\mathbf{x}^{V})$. In addition, there holds $\mathcal{N}=N(\mathbf{x}^{\star})$.
\end{lemma}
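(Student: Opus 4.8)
The plan is to show that, once the support has frozen at $\mathcal{N}:=N(\mathbf{x}^V)$, Algorithm \eqref{eq:fixed-eqs_two_var_iter} reduces to the Picard iteration of the operator $Q_{\mathcal{N}}$, and then to invoke Theorem \ref{theorem:averaged_nonexp_con}. First I would eliminate $\mathbf{y}^{k+1}$ from the second line of the algorithm: by Lemma \ref{lemma:y_existence} the matrix $\mathbf{I}+\frac{\beta}{\gamma}\mathbf{K}^*\mathbf{K}$ is invertible, so for every $k\geq 1$ one has $\mathbf{y}^k=\left(\mathbf{I}+\frac{\beta}{\gamma}\mathbf{K}^*\mathbf{K}\right)^{-1}H\mathbf{x}^k$, where $H$ is the translation operator $H\mathbf{x}=\mathbf{x}+\frac{\beta}{\gamma}\mathbf{K}^*\mathbf{r}$. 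For $k\geq V$ the support obeys $N(\mathbf{x}^{k+1})=\mathcal{N}$, so Item (iii) of Lemma \ref{P_BN_prox_L0} converts the prox step into the projection $\mathbf{x}^{k+1}=\mathrm{P}_{\mathcal{B}_{\mathcal{N}}}(\mathbf{y}^k)$. Composing the two relations gives $\mathbf{x}^{k+1}=Q_{\mathcal{N}}(\mathbf{x}^k)$ for all $k\geq V$; that is, $\{\mathbf{x}^k\}_{k\geq V}$ is exactly the Picard sequence of $Q_{\mathcal{N}}$ started at $\mathbf{x}^V$.

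Second, I would verify the two hypotheses of Theorem \ref{theorem:averaged_nonexp_con}. Since $0<\frac{\beta}{\gamma}<\frac{\sqrt{5}-1}{2}<1$, Lemma \ref{lemma:IK*K_alpha2_ave} applies and Lemma \ref{composition-operator} shows that $Q_{\mathcal{N}}$ is nonexpansive averaged. For nonemptiness of $Fix(Q_{\mathcal{N}})$ I would appeal to Proposition \ref{pro:G_minimizer_exist}: the convex model \eqref{model:convex_model} with this $\mathcal{N}$ has a solution, and eliminating $\mathbf{y}$ from the optimality equations \eqref{eq:fixed-eqs_convex_model_solution} in the same manner as above shows that any such solution $\mathbf{x}$ satisfies $\mathbf{x}=Q_{\mathcal{N}}(\mathbf{x})$. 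Hence $Fix(Q_{\mathcal{N}})\neq\emptyset$, and Theorem \ref{theorem:averaged_nonexp_con} yields $\mathbf{x}^k\to\mathbf{x}^\star$ for some $\mathbf{x}^\star\in Fix(Q_{\mathcal{N}})$.

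Third, I would recover $\mathbf{y}^\star$ and identify the limit with a solution of the convex model. Setting $\mathbf{y}^\star:=\left(\mathbf{I}+\frac{\beta}{\gamma}\mathbf{K}^*\mathbf{K}\right)^{-1}H\mathbf{x}^\star$ and using continuity of this affine map gives $\mathbf{y}^k\to\mathbf{y}^\star$. Unwinding $\mathbf{x}^\star=Q_{\mathcal{N}}(\mathbf{x}^\star)$ together with the definition of $\mathbf{y}^\star$ recovers precisely the pair of equations \eqref{eq:fixed-eqs_convex_model_solution}, so Proposition \ref{pro:G_minimizer_exist} certifies that $(\mathbf{x}^\star,\mathbf{y}^\star)$ solves \eqref{model:convex_model} with $\mathcal{N}$. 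Finally, because each $\mathbf{x}^k\in\mathrm{prox}_{\beta\|\cdot\|_0}(\mathbf{y}^{k-1})$ and $\mathbf{x}^k\to\mathbf{x}^\star$, Lemma \ref{lemma:support_xstar} gives $N(\mathbf{x}^k)=N(\mathbf{x}^\star)$ for all large $k$; comparing this with $N(\mathbf{x}^k)=\mathcal{N}$ for $k\geq V$ forces $\mathcal{N}=N(\mathbf{x}^\star)$.

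The main obstacle I anticipate is the nonemptiness of $Fix(Q_{\mathcal{N}})$, which is the one hypothesis of the averaged-operator theorem not delivered by the algorithm itself; the resolution is to borrow existence from the convex model through Proposition \ref{pro:G_minimizer_exist} rather than attempting to extract a fixed point from boundedness of the iterates. A secondary point requiring care is the index bookkeeping: the prox-to-projection replacement at step $k$ must be justified through $N(\mathbf{x}^{k+1})=\mathcal{N}$ rather than $N(\mathbf{x}^{k})=\mathcal{N}$, so one should confirm that it is valid for every $k\geq V$.
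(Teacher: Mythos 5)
Your proposal is correct and follows essentially the same route as the paper: freeze the support, convert the prox step to the projection $\mathrm{P}_{\mathcal{B}_{\mathcal{N}}}$ via Item (iii) of Lemma \ref{P_BN_prox_L0}, recognize $\{\mathbf{x}^k\}_{k\geq V}$ as a Picard sequence of the averaged operator $Q_{\mathcal{N}}$ with $Fix(Q_{\mathcal{N}})\neq\emptyset$ supplied by Proposition \ref{pro:G_minimizer_exist}, and then recover $\mathbf{y}^{\star}$ and the support identity from Lemma \ref{lemma:support_xstar}. The only cosmetic difference is that you obtain convergence of $\{\mathbf{y}^k\}$ by continuity of the affine map $\mathbf{x}\mapsto\bigl(\mathbf{I}+\frac{\beta}{\gamma}\mathbf{K}^*\mathbf{K}\bigr)^{-1}H\mathbf{x}$, whereas the paper runs an equivalent Cauchy-sequence estimate.
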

\begin{proof}
By Proposition \ref{pro:G_minimizer_exist}, model \eqref{model:convex_model} with $\mathcal{N}:=N(\mathbf{x}^{V})$ has a solution, and a pair $(\tilde{\mathbf{x}},\tilde{\mathbf{y}})$ is a solution of model \eqref{model:convex_model} with  $\mathcal{N}:=N(\mathbf{x}^{V})$ if and only if $(\tilde{\mathbf{x}},\tilde{\mathbf{y}})$ satisfies equations \eqref{eq:fixed-eqs_convex_model_solution}. According to Proposition \ref{pro:G_minimizer_exist}, if $(\tilde{\mathbf{x}},\tilde{\mathbf{y}})$ is a solution of model \eqref{model:convex_model} with  $\mathcal{N}:=N(\mathbf{x}^{V})$, then  $\tilde{\mathbf{x}}$ is a solution of the equation
\begin{eqnarray}
\mathbf{x}=\mathrm{P}_{\mathcal{B}_{\mathcal{N}}}
\left[\left(\mathbf{I}+\frac{\beta}{\gamma}\mathbf{K}^*\mathbf{K}\right)^{-1}
\left(\mathbf{x}+\frac{\beta}{\gamma}\mathbf{K}^*\mathbf{r}\right)\right]
\label{eq:PN_IK8K_H_fixedeq}
\end{eqnarray}
and in this case, the above equation has a solution.

We next verify that the subsequence $\left\{(\mathbf{x}^k,\mathbf{y}^k)\right\}_{k\geq V}$
satisfies the equations
\begin{eqnarray}
\left\{
\begin{array}{l}
\mathbf{x}^{k+1}=\mathrm{P}_{\mathcal{B}_{\mathcal{N}}}
\left(\mathbf{y}^{k}\right),
\\
\mathbf{y}^{k+1}=\mathbf{x}^{k+1}
-\frac{\beta}{\gamma}\mathbf{K}^*\left(\mathbf{K}\mathbf{y}^{k+1}-\mathbf{r}\right).\ \ \
\end{array}
\right.\label{eq:fixed-eqs_two_var_iter_B}
\end{eqnarray}
Since $\left\{(\mathbf{x}^k,\mathbf{y}^k)\right\}$
is a sequence generated by Algorithm \eqref{eq:fixed-eqs_two_var_iter}, there holds $\mathbf{x}^{k+1}\in\mathrm{prox}_{\beta\|\cdot\|_0}
\left(\mathbf{y}^{k}\right)$. Applying Item (iii) of Lemma \ref{P_BN_prox_L0} yields
$\mathbf{x}^{k+1}=\mathrm{P}_{\mathcal{B}_{N(\mathbf{x}^{k+1})}}
\left(\mathbf{y}^{k}\right)$. Moreover,  by hypothesis we have $\mathcal{N}=N(\mathbf{x}^{k+1})$ for any $k\geq V$. Therefore, we derive that $\mathbf{x}^{k+1}=\mathrm{P}_{\mathcal{B}_{\mathcal{N}}}\left(\mathbf{y}^{k}\right)$ for all $k\geq V$.  This confirms that \eqref{eq:fixed-eqs_two_var_iter_B} holds for all $k\geq V$.

We now show that the subsequence $\left\{\mathbf{x}^k\right\}_{k\geq V}$ is convergent. In \eqref{eq:fixed-eqs_two_var_iter_B},  substituting the second equation of \eqref{eq:fixed-eqs_two_var_iter_B} into the first one to eliminate $\mathbf{y}^{k}$, we obtain that
$$
\mathbf{x}^{k+1}=\mathrm{P}_{\mathcal{B}_{\mathcal{N}}}
\left[\left(\mathbf{I}+\frac{\beta}{\gamma}\mathbf{K}^*\mathbf{K}\right)^{-1}
\left(\mathbf{x}^k+\frac{\beta}{\gamma}\mathbf{K}^*\mathbf{r}\right)\right].
$$
By the definition of operator $Q_{\mathcal{N}}$ with  $\mathcal{N}:=N(\mathbf{x}^{V})$, we see that the subsequence $\left\{\mathbf{x}^k\right\}_{k\geq V}$ is a Picard sequence of  $Q_{\mathcal{N}}$.
Lemma \ref{composition-operator} confirms that $Q_{\mathcal{N}}$ is  nonexpansive $\frac{1}{4}\alpha$-averaged.
Moreover,  the existence of a solution  of equation \eqref{eq:PN_IK8K_H_fixedeq} ensures that $Fix(Q_{\mathcal{N}})\neq \emptyset$.
Theorem \ref{theorem:averaged_nonexp_con} implies that the subsequence $\left\{\mathbf{x}^k\right\}_{k\geq V}$ converges to a solution $\mathbf{x}^\star$ of equation \eqref{eq:PN_IK8K_H_fixedeq}.
Note that $\left\{\mathbf{x}^k\right\}_{k\geq V}$ is a sequence in  $\mathcal{B}_{\mathcal{N}}$. Since $\mathcal{B}_{\mathcal{N}}$ is closed, we get that $\mathbf{x}^\star\in\mathcal{B}_{\mathcal{N}}$. In addition, applying Lemma \ref{lemma:support_xstar} yields  $\mathcal{N}=N(\mathbf{x}^{V})=N(\mathbf{x}^{\star})$.

It remains to show that the subsequence $\left\{\mathbf{y}^k\right\}_{k\geq V}$ is convergent. It suffices to prove that $\left\{\mathbf{y}^k\right\}_{k\geq V}$
is a Cauchy sequence in $\mathbb{R}^{N}$.  By \eqref{eq:fixed-eqs_two_var_iter_B} and Lemma \ref{lemma:y_existence}, we have that
\begin{eqnarray}\label{eq_yk_xk}
\mathbf{y}^{k+1}=\left(\mathbf{I}+\frac{\beta}{\gamma}\mathbf{K}^*\mathbf{K}
\right)^{-1}
\left(\mathbf{x}^{k+1}+\frac{\beta}{\gamma}\mathbf{K}^*\mathbf{r}\right).
\end{eqnarray}
Thus, for all $m>n>V$, by the above equation we have that
$$
\left\|\mathbf{y}^{m}-\mathbf{y}^{n}\right\|_2
=\left\|\left(\mathbf{I}+\frac{\beta}{\gamma}\mathbf{K}^*\mathbf{K}\right)^{-1}
\left(\mathbf{x}^{m}-\mathbf{x}^{n}\right)\right\|_2
\leq \left\|\left(\mathbf{I}+\frac{\beta}{\gamma}\mathbf{K}^*\mathbf{K}\right)^{-1}
\right\|_2
\left\|\mathbf{x}^{m}-\mathbf{x}^{n}\right\|_2,
$$
which together with inequality \eqref{eig_IKK} yields
$
\left\|\mathbf{y}^{m}-\mathbf{y}^{n}\right\|_2
\leq\left\|\mathbf{x}^{m}-\mathbf{x}^{n}\right\|_2.
$
Since $\left\{\mathbf{x}^k\right\}_{k\geq V}$ is convergent, it is a Cauchy sequence in $\mathcal{B}_{\mathcal{N}}$. Thus, $\left\{\mathbf{y}^k\right\}_{k\geq V}$
is a Cauchy sequence in $\mathbb{R}^{N}$ and is convergent.
Denote $\mathbf{y}^\star$ as the limit point of $\left\{\mathbf{y}^k\right\}_{k\geq V}$.

Finally, in equation \eqref{eq_yk_xk}, we let $k\rightarrow +\infty$ and obtain that
$
\mathbf{y}^{\star}=\left(\mathbf{I}+\frac{\beta}{\gamma}\mathbf{K}^*\mathbf{K}
\right)^{-1}\left(\mathbf{x}^{\star}+\frac{\beta}{\gamma}\mathbf{K}^*\mathbf{r}\right),
$
where $\mathbf{x}^\star$ is a solution of equation \eqref{eq:PN_IK8K_H_fixedeq}.
Consequently, the pair $\left(\mathbf{x}^\star,\mathbf{y}^\star\right)\in\mathcal{B}_{\mathcal{N}}\times\mathbb{R}^{N}$ satisfies equations \eqref{eq:fixed-eqs_two_var_iter_Bb}. According to Proposition \ref{pro:F_local_minimizer}, $\left(\mathbf{x}^\star,\mathbf{y}^\star\right)$ is a solution of model \eqref{model:convex_model} with $\mathcal{N}:=N(\mathbf{x}^{\star})$.
\end{proof}


We are now ready to prove the main theorem on convergence of Algorithm \eqref{eq:fixed-eqs_two_var_iter}.

\begin{theorem}\label{theorem:conv_analysis_main1}
 Let $\left\{(\mathbf{x}^k,\mathbf{y}^k)\right\}$
be a sequence generated by Algorithm \eqref{eq:fixed-eqs_two_var_iter} with an initial
$(\mathbf{x}^0,\mathbf{y}^0)\in\mathbb{R}^{N}\times\mathbb{R}^{N}$. If $\beta,\gamma>0$ are chosen such that  $0<\frac{\beta}{\gamma}<\frac{\sqrt{5}-1}{2}$,
then $\left\{(\mathbf{x}^k,\mathbf{y}^k)\right\}$  converges to a local minimizer
 $(\mathbf{x}^{\star},\mathbf{y}^{\star})$ of  model \eqref{model:two_variable}.
Moreover, the sequence $\left\{F(\mathbf{x}^k,\mathbf{y}^k)\right\}$ converges to $F(\mathbf{x}^{\star},\mathbf{y}^{\star})$.  Furthermore, if $|y_j^{\star}|\neq\sqrt{2\beta}$ for all $j\in N(\mathbf{y}^{\star})$, then $\mathbf{y}^{\star}$ is a local minimizer of model \eqref{model:general}.
\end{theorem}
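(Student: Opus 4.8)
The plan is to bootstrap the two preparatory results of this section---the eventual stabilization of the support (Lemma \ref{lemma:x_sparsity_two}) and the convergence of the stabilized tail to a minimizer of the convex surrogate (Lemma \ref{theorem:conv_analysis_app})---into a statement about the entire sequence and about the original non-convex problem. First I would invoke Lemma \ref{lemma:x_sparsity_two} to fix an integer $V>0$ with $N(\mathbf{x}^k)=N(\mathbf{x}^V)$ for all $k\geq V$, and set $\mathcal{N}:=N(\mathbf{x}^V)$. Lemma \ref{theorem:conv_analysis_app} then supplies a limit $(\mathbf{x}^\star,\mathbf{y}^\star)$ of the tail $\{(\mathbf{x}^k,\mathbf{y}^k)\}_{k\geq V}$ that solves the convex model \eqref{model:convex_model} with this $\mathcal{N}$, and moreover satisfies $\mathcal{N}=N(\mathbf{x}^\star)$. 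Since altering finitely many initial terms does not affect a limit, convergence of the tail is convergence of the whole sequence $\{(\mathbf{x}^k,\mathbf{y}^k)\}$ to $(\mathbf{x}^\star,\mathbf{y}^\star)$.

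To identify this limit as a local minimizer of the non-convex model \eqref{model:two_variable}, I would apply Theorem \ref{lemma:relation_nonconv_convex} in the direction ``minimizer of \eqref{model:convex_model} with $\mathcal{N}=N(\mathbf{x}^\star)$ $\Rightarrow$ local minimizer of \eqref{model:two_variable}''. This step is immediate, because Lemma \ref{theorem:conv_analysis_app} already furnishes $(\mathbf{x}^\star,\mathbf{y}^\star)$ as such a minimizer with precisely the index set $\mathcal{N}=N(\mathbf{x}^\star)$ demanded by the theorem. For the convergence of objective values I would exploit that the discontinuous part of $F$ is frozen along the tail: for $k\geq V$ we have $N(\mathbf{x}^k)=N(\mathbf{x}^\star)$, hence $\|\mathbf{x}^k\|_0=\|\mathbf{x}^\star\|_0$, so the term $\gamma\|\mathbf{x}^k\|_0$ is eventually constant, while the remaining summands of $F$ in \eqref{def:F} are continuous quadratics; since $(\mathbf{x}^k,\mathbf{y}^k)\to(\mathbf{x}^\star,\mathbf{y}^\star)$, this gives $F(\mathbf{x}^k,\mathbf{y}^k)\to F(\mathbf{x}^\star,\mathbf{y}^\star)$. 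Equivalently, the limit of the already-convergent sequence $\{F(\mathbf{x}^k,\mathbf{y}^k)\}$ from Item (i) of Theorem \ref{theorem:conv_analysis} must equal this value.

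The final assertion is the delicate one and I expect it to be the main obstacle. Proposition \ref{pro:model_relation_localm} converts ``$(\mathbf{x}^\star,\mathbf{y}^\star)$ is a local minimizer of \eqref{model:two_variable}'' into ``$\mathbf{y}^\star$ is a local minimizer of \eqref{model:general}'', but only under the hypotheses $\mathbf{x}^\star\in\mathrm{prox}_{\beta\|\cdot\|_0}(\mathbf{y}^\star)$ and $|y_j^\star|\neq\sqrt{2\beta}$ on $N(\mathbf{y}^\star)$. The non-degeneracy condition is assumed, so the substantive work is verifying the proximity inclusion. I would start from $\mathbf{x}^\star=\mathrm{P}_{\mathcal{B}_{N(\mathbf{x}^\star)}}(\mathbf{y}^\star)$ (the first equation of \eqref{eq:fixed-eqs_two_var_iter_Bb}, provided by Lemma \ref{theorem:conv_analysis_app}) and then check the two conditions of Lemma \ref{P_BN_prox_L0_two}. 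For $j\in N(\mathbf{x}^\star)$, passing to the limit in the bound $|x_j^k|\geq\sqrt{2\beta}$, which holds by Item (i) of Lemma \ref{P_BN_prox_L0} since $\mathbf{x}^k\in\mathrm{prox}_{\beta\|\cdot\|_0}(\mathbf{y}^{k-1})$, and using $y_j^\star=x_j^\star$, yields $|y_j^\star|\geq\sqrt{2\beta}$.

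For $j\notin N(\mathbf{x}^\star)$, taking the limit of $|y_j^{k-1}|\leq\sqrt{2\beta}$, which holds by Item (ii) of Lemma \ref{P_BN_prox_L0}, yields only the non-strict $|y_j^\star|\leq\sqrt{2\beta}$, and this is exactly where the non-degeneracy hypothesis is needed to upgrade the inequality to strict: any such $j$ with $y_j^\star\neq0$ lies in $N(\mathbf{y}^\star)$ and thus satisfies $|y_j^\star|\neq\sqrt{2\beta}$, forcing $|y_j^\star|<\sqrt{2\beta}$, while $y_j^\star=0$ trivially gives $|y_j^\star|<\sqrt{2\beta}$. With both conditions verified, Lemma \ref{P_BN_prox_L0_two} delivers $\mathbf{x}^\star\in\mathrm{prox}_{\beta\|\cdot\|_0}(\mathbf{y}^\star)$, and Proposition \ref{pro:model_relation_localm} then completes the proof.
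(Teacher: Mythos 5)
Your proposal is correct and follows essentially the same route as the paper's proof: invoke Lemma \ref{theorem:conv_analysis_app} for convergence to a minimizer of the convex surrogate, Theorem \ref{lemma:relation_nonconv_convex} to identify it as a local minimizer of \eqref{model:two_variable}, the frozen support from Lemma \ref{lemma:x_sparsity_two} for convergence of the objective values, and Proposition \ref{pro:model_relation_localm} for the final claim. The only difference is that the paper simply asserts $\mathbf{x}^{\star}\in\mathrm{prox}_{\beta\|\cdot\|_0}(\mathbf{y}^{\star})$ with a ``notice that,'' whereas you verify it carefully via Lemma \ref{P_BN_prox_L0_two} and the non-degeneracy hypothesis --- a worthwhile addition that fills in a step the paper leaves implicit.
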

\begin{proof}
 By Lemma \ref{theorem:conv_analysis_app},  the sequence $\left\{(\mathbf{x}^k,\mathbf{y}^k)\right\}$  converges to  a pair $(\mathbf{x}^{\star},\mathbf{y}^{\star})$, which is a solution of the convex model \eqref{model:convex_model} with $\mathcal{N}:=N(\mathbf{x}^{\star})$.
  It follows from Theorem \ref{lemma:relation_nonconv_convex} that $(\mathbf{x}^{\star},\mathbf{y}^{\star})$ is a local minimizer of the non-convex model \eqref{model:two_variable}.

We next prove that $\left\{F(\mathbf{x}^k,\mathbf{y}^k)\right\}$ converges to $F(\mathbf{x}^{\star},\mathbf{y}^{\star})$. By the hypothesis and
Theorem \ref{theorem:conv_analysis} we have that the sequence  $\left\{F(\mathbf{x}^k,\mathbf{y}^k)\right\}$ is convergent. As the function $G$ defined by \eqref{def:G} is continuous, we have that
$
\lim\limits_{k\rightarrow \infty}G(\mathbf{x}^k,\mathbf{y}^k)=G(\mathbf{x}^{\star},\mathbf{y}^{\star}).
$
Furthermore, by Lemma
\ref{lemma:x_sparsity_two}, there exists a positive integer  $V$ such that
$\ N(\mathbf{x}^k)=N(\mathbf{x}^{V})$  for all $k\geq V$. Hence,
$\ N(\mathbf{x}^k)=N(\mathbf{x}^{\star})$ and $\left\|\mathbf{x}^k\right\|_0=\left\|\mathbf{x}^{\star}\right\|_0$  for all $k\geq V$. Noting that  $F(\mathbf{x}^k,\mathbf{y}^k)=G(\mathbf{x}^k,\mathbf{y}^k)
+\gamma\left\|\mathbf{x}^k\right\|_0$ and letting $k\rightarrow \infty$ yield
$
\lim\limits_{k\rightarrow \infty}F(\mathbf{x}^k,\mathbf{y}^k)=F(\mathbf{x}^{\star},\mathbf{y}^{\star}).
$

Notice that $\mathbf{x}^{\star}\in\mathrm{prox}_{\beta\|\cdot\|_0}
\left(\mathbf{y}^{\star}\right)$. Since $(\mathbf{x}^{\star},\mathbf{y}^{\star})$  is a local minimizer of  model \eqref{model:two_variable} and $|y_j^{\star}|\neq\sqrt{2\beta}$ for all $j\in N(\mathbf{y}^{\star})$,
 applying Proposition  \ref{pro:model_relation_localm} yields that $\mathbf{y}^{\star}$ is a local minimizer of model \eqref{model:general}.
\end{proof}

\section{Applications in Seismic  Wavefield Modeling in the Frequency Domain}\label{sec:MAM4}\setcounter{equation}{0}

In this section, we consider applications of the incomplete Fourier transform method developed in section \ref{sec:MAM_alro} to seismic  wavefield modeling in the frequency domain.

Frequency domain modeling for the generation of synthetic seismograms and crosshole tomography has been an active field of
research since 1970s \cite{LD,M1}. Modeling of seismic wavefield in the frequency domain requires solving a sequence of boundary value problems of the Helmholtz equation with different wave numbers (frequencies) \cite{J1,Lin_Lebed_Erlangga}.  When solutions for all frequencies satisfying the Nyquist-Shannon criterion are available, we can obtain the corresponding time domain results by the inverse discrete Fourier transform (IDFT)  \cite{Brigham,H3,Riyanti-Kononov-Erlanggga-Vuik}. However, it is a challenging task to obtain solutions for the boundary value problems corresponding to high frequencies \cite{B2}. To overcome this difficulty, an incomplete Fourier transform model  \cite{WSX} was proposed for frequency domain modeling, by using an $\ell_1$-norm regularization method. According to \cite{Fan-Li2001}, the $\ell_1$-norm is not an ideal sparsity promotion function since it would cause biases. The regularization with the envelop of the $\ell_0$ norm developed in the previous sections can avoid biases and allow us to recover data from incomplete Fourier transforms with only lower frequencies. In this way, we do not have to solve boundary value problems of the Helmholtz equation with large wave numbers.

We now recall the seismic wavefield modeling in the frequency domain. In the time domain, the 2D acoustic wave equation has the form
\begin{equation}
\label{wave_equation_time_domain}
 \frac{1}{v^2}\frac{\partial^2u}{\partial t^2} -\Delta
u=g,\ \ \mbox{on}\ \ \mathbb{R}^2,
\end{equation}
where $u$, $v$ and $g$ denote, respectively, the unknown pressure of
the wave field, the background velocity and the source term in the
medium. Both $u$ and $g$ are functions in the spatial-time space $\mathbb{R}^2 \times \mathbb{R}_+$, while $v$ is a function in the spatial space $\mathbb{R}^2$. By using the Fourier transform, one may convert the wave equation as a family of the Helmholtz equations.
Upon solving these Helmholtz equations and converting back to the solution of the wave equation, one can model propagation of seismic wavefields. This is the frequency approach for modeling the seismic wave propagation.

Next, we present  the 2D acoustic wave equation in the frequency domain. To this end, we call the definition of the continuous Fourier transform.
For a function $\psi$ defined on $\mathbb{R}$, its  continuous Fourier transform at frequency $f \in \mathbb{R}$ is given as
\begin{equation}\label{continuous_FT}
\widehat{\psi}(f) := \int_{\mathbb{R}}\psi(t)e^{-i2\pi ft} dt.
\end{equation}
With the Fourier transform, the acoustic wave equation \eqref{wave_equation_time_domain} is converted to the well-known Helmholtz equation
\begin{equation}
\label{Helmholtz_Equation}
-\Delta \widehat{u}-\kappa^2\widehat{u}=\widehat{g},\ \
\end{equation}
where $\kappa$ is the wave number defined by $\kappa  :=\frac{2\pi f}{v}$, with $f$ being the frequency in ${\rm Hz}$. For any $(x,z) \in \mathbb{R}^2$,  $\widehat{u}(x,z, f)$ and $\widehat{g}(x,z, f)$ represent, respectively, the continuous Fourier transforms at the frequency $f$ of the functions  $u(x,z, \cdot)$ and $g(x,z, \cdot)$ which appear in \eqref{wave_equation_time_domain}. The
solution  $u(x,z,t)$ of the acoustic wave equation
\eqref{wave_equation_time_domain} may be obtained by the inverse
Fourier transform from the solutions $\widehat{u}(x,z,f)$ of the Helmholtz equation, for all
$f\in\mathbb{R}$. Therefore, the
fundamental problem for the acoustic wave modeling in the
frequency domain is to solve the family of the Helmholtz equations
\eqref{Helmholtz_Equation} for all $f \in \mathbb{R}$.

We now discuss the generation of synthetic seismograms in frequency domain modeling. For convenience of expression, we describe only the generation of the synthetic seismogram of a fixed point $(x_r,z_r)$. We assume that there exists $T>0$ such that the solution $u$ of the wave equation (\ref{wave_equation_time_domain})  satisfies the condition  $u(x_r,z_r,t)= 0$ for all $t\notin[0,T]$. Mathematically, the synthetic
seismogram of the point $(x_r,z_r)$ is the function  $u(x_r,z_r,t)$, where $t\in[0,T]$. We will use the notation
$u_r(t)\ :=u(x_r,z_r,t)$ and $\widehat u_r(f):=\widehat{u}(x_r,z_r,f)$.  In the context of seismic wavefield modeling,  we  say that  a receiver is located at the point
$(x_r,z_r)$ \cite{J1}, and the synthetic seismogram  $u(x_r,z_r,t)$ is the wave which the receiver receives.
Our goal is to obtain the values of $u_r(t)$ at $M$ equally spaced points taken in the interval $[0, T]$, where $M$ is a  positive integer. By the definition of the continuous Fourier transform \eqref{continuous_FT} of $u_r(t)$, with the rectangle quadrature method, we have that
\begin{eqnarray}
\widehat{u}_r(f)=\lambda\sum_{n=0}^{M-1}u_r\left(\lambda n\right)e^{-i2\pi f\lambda n},\ \  \mbox{for all}\ \ f\in
\mathbb{R},\label{problem_Ftransform_nume_inte2}
\end{eqnarray}
where $\lambda\ :=\frac{T}{M}$.
In the frequency domain seismic wavefield modeling, $\widehat{u}_r(f)$  is usually obtained through solving the Helmholtz equation \eqref{Helmholtz_Equation} by finite difference \cite{CCFW,J1} or finite element methods \cite{B2}.
In addition, as the source function in the seismic case approximately has a limited spectrum (see, \cite{WSX}),  we denote by  $f_{nmax}$ an approximate highest frequency of the source, and assume that equation \eqref{problem_Ftransform_nume_inte2} holds approximately for $f\in[0,f_{nmax}]$.
Following the Nyquist
sampling theorem, we require the frequency step size $\Delta f$  to satisfy the condition
$\Delta f \leq\frac{1}{T}.$
Hence, we choose $f_m=\frac{m}{T}$, for $m=0,1,\ldots,M-1$, and the
corresponding frequency step size $\Delta f =\frac{1}{T}$. Also, we let
$$
\mathbf{u}:=\left[u_r(0), u_r\left(\frac{T}{M}\right), \ldots \ u_r\left(\frac{(M-1)T}{M}\right) \right]^\top
$$
and
$$
\widehat{\mathbf{u}}:=\left[\widehat{u}_r(0), \widehat{u}_r\left(\frac{1}{T}\right), \ldots, \widehat{u}_r\left(\frac{M-1}{T}\right)\right]^\top.\nonumber
$$
Hence, from equation~\eqref{problem_Ftransform_nume_inte2}, we have that
\begin{eqnarray}
\widehat{\mathbf{u}}=\lambda \sqrt{M} \mathbf{F} \mathbf{u},
\label{eq:u_uhat_relation}
\end{eqnarray}
where  $\mathbf{F}$  is the $M \times M$ discrete Fourier transform
matrix defined as before. We can reconstruct
$\mathbf{u}$ from $\widehat{\mathbf{u}}$ with IDFT, that is,
$$
\mathbf{u}=\frac{1}{\lambda}\frac{1}{\sqrt{M}}\mathbf{F}^*  \widehat{\mathbf{u}}.
$$



For a large scale problem, solving the Helmholtz equation with large wave numbers is a difficult task (see, \cite{B2,H3}).  We even have difficulty in obtaining $\widehat{u}_r(f)$ for frequencies $f$ satisfying $f\leq f_{nmax}$. In fact, only some components of $\hat{\mathbf{u}}$ are available. We use $\mathbf{r}_{obs}$ to denote the vector formed by the $\widehat{\mathbf{u}}$ by removing its components that are not available. Thus, there exists a row selector matrix $\mathbf{R}$ such that
\begin{equation}\label{partialFT}
 \mathbf{R}\widehat{\mathbf{u}}=\mathbf{r}_{obs}.
\end{equation}
Let $\mathbf{r} :=\frac{1}{\sqrt{M}}\mathbf{r}_{obs}$. Substituting equation  \eqref{eq:u_uhat_relation} into equation \eqref{partialFT} and noticing
the definition of  $\mathbf{r}$  yield
\begin{equation}
\label{seismogram-model-origin-app-new}
\lambda \mathbf{RF}\mathbf{u}=
\mathbf{r},
\end{equation}
which is indeed an incomplete Fourier transform system \eqref{seismogram-model-origin-app} with $\mathbf{v}:=\lambda\mathbf{u}$.
We also note that different row selector matrices correspond to different ways of sampling.
To avoid solving the Helmholtz equation with
large wave numbers, we follow \cite{WSX} to sample only lower frequencies in recovering the seismic wavefield. We then adopt the sparse regularization model \eqref{model:two_variable} developed in Section \ref{sec:MAM2} to find an approximation of $\mathbf{u}$ of equation \eqref{seismogram-model-origin-app-new} by employing Algorithm \eqref{eq:fixed-eqs_two_var_iter} proposed in Section \ref{sec:MAM_alro}.




\section{Numerical Experiments}\label{sec:MAM5}\setcounter{equation}{0}
In this section, we present four numerical experiments to demonstrate the effectiveness of the proposed sparse regularization model \eqref{model:two_variable}. All the experiments are performed on an Intel Xeon (4-core) with 3.60 GHz, 16 Gb RAM and Matlab 7v.

We begin with setting up equation (\ref{Inverse_fourier_problem-new}) and model \eqref{model:two_variable}. The selector matrix $\mathbf{R}$ depends on the sampling method to be specified later.  The $N\times M$ matrix $\mathbf{W}$, with $N=lM$ and $l$ being a positive integer,  is constructed from the piecewise linear spline tight framelets system described in \cite{Cai-Chan-Shen-Shen}.
By $\Delta f$ we denote the step size of the frequency $f$, and $f_{min}$ and $f_{max}$ represent, respectively,  the lowest frequency and the highest frequency that we compute.
%
We will use Algorithm
\eqref{eq:fixed-eqs_two_var_iter} to solve the model \eqref{model:two_variable} (EL0M). When implementing Algorithm \eqref{eq:fixed-eqs_two_var_iter},  $\mathrm{prox}_{\beta\|\cdot\|_0} (\mathbf{z})$, for all $\mathbf{z}\in\mathbb{R}^{N}$, is computed by equation \eqref{prox_envl0-1} with the following formula
\begin{equation*}
\mathrm{prox}_{\beta|\cdot|_0} (z_i) = \left\{
                             \begin{array}{ll}
                              \{z_i\}, & \hbox{$|z_i|> \sqrt{2\beta}$;} \\
                              \{ 0\}, & \hbox{otherwise.}
                             \end{array}
                           \right.
\end{equation*}
We point out that  the variable $\mathbf{y}$ in model \eqref{model:two_variable} is used to generate the numerical results of EL0M in all the following tables and figures. This is because our goal is to solve model \eqref{model:general} and Theorem \ref{theorem:conv_analysis_main1} verifies that a sequence $\{(\mathbf{x}^{k},\mathbf{y}^{k})\}$ generated by Algorithm \eqref{eq:fixed-eqs_two_var_iter} converges to a pair $(\mathbf{x}^{\star},\mathbf{y}^{\star})$,  with $\mathbf{y}^{\star}$ being a local minimizer of model \eqref{model:general}.

We shall compare the effectiveness of model  EL0M with that of the  $\ell_1$-norm model (L1M) which has the form
\begin{eqnarray*}
{\rm argmin}\left\{\frac{1}{2}\|\mathbf{K}\mathbf{y}-\mathbf{r}\|_2^2
+\gamma\|\mathbf{y}\|_1\right\},
\end{eqnarray*}
proposed in \cite{WSX} for inversion of incomplete Fourier transforms.
Model L1M will be solved by Algorithm \ref{algo-uniform-better} \cite{WSX}.
\begin{algorithm}
\caption{ }
 \begin{algorithmic}[htb]
    \State Input: the matrix $\mathbf{K}$, the vector $\mathbf{r}$, and the diagonal matrix $\mathbf{\Gamma}=\gamma \mathbf{I}$
   \State Initialization: $\mathbf{y}^0=\mathbf{v}^1=0$, $t_1=1$.
   \Repeat { ($k\ge 0$)}
\begin{eqnarray*}
\begin{array}{l}
\mathbf{y}^{k}={\rm prox}_{ \|\cdot\|_{1} \circ \mathbf{\Gamma}}\left(\mathbf{v}^k-\mathbf{K}^*(\mathbf{K}\mathbf{v}^k-\mathbf{r})\right)\\
t_{k+1} = \frac{1+\sqrt{1+4t^2_k}}{2}\\
\mathbf{v}^{k+1} = \mathbf{y}^{k}+\left(\frac{t_k-1}{t_{k+1}}\right) (\mathbf{y}^{k}-\mathbf{y}^{k-1})
\end{array}
\label{alg_dual2_procedure}
\end{eqnarray*}
   \Until{$\|\mathbf{y}^k-\mathbf{y}^{k-1}\|_2/\|\mathbf{y}^{k-1}\|_2>tol$}

   \State Return: $\mathbf{u}=\frac{1}{\lambda}\mathbf{W}^\top \mathbf{y}^\infty$
 \end{algorithmic}\label{algo-uniform-better}
 \end{algorithm}
The quality of the restored signals will be measured by the signal-to-noise ratio (SNR) defined by
$$
{\rm SNR}\ :=10
\log_{10}\left(\frac{\|data_{orig}\|_2^2}{\|data_{orig}-data_{reco}\|_2^2}\right),
$$
where $data_{orig}$ and $data_{reco}$  represent the
original data and the recovered data, respectively.

\subsection{Recovering time-domain data with  exact frequency data}\label{example_gaussian}

In this subsection, we consider recovering the first
derivative of Gaussian function with its insufficient {\it exact} frequency data by model EL0M, with comparison to model L1M, for random sampling and uniform sampling.

The first
derivative of the Gaussian function has the form
\begin{eqnarray}
G(t,t_0,\tilde{\alpha})=-2\tilde{\alpha}(t-t_0)\exp(-\tilde{\alpha}(t-t_0)^2),
\label{Gaussian_time}
\end{eqnarray}
and its Fourier transform is given by
\begin{eqnarray}
\widehat{G}(f,t_0,\tilde{\alpha})=2\sqrt{\frac{\pi}{\tilde{\alpha}}}\pi
f\exp\left(-\frac{\pi^2f^2}{\tilde{\alpha}}\right)\left[\sin(2\pi
ft_0)+i\cos(2\pi ft_0)\right]. \label{Gaussian_frequency}
\end{eqnarray}
In this experiment, we set $t_0:=1$ and $\tilde{\alpha}:=200$, and choose $T:=2s$ and $M:=129$, where $s$ denotes second.
The natural maximum frequency of $\widehat{G}(f,1,200)$ is approximately equal to $15$Hz,  that is, $f_{nmax}=15$Hz.
In the remaining part of this paper, we will always use Hz as the frequency unit without mentioning it.
Furthermore, we set the tolerance as $tol=10^{-6}$ for iterations in implementing the two algorithms, and obtain the SNR-values for reconstructed data.

We first test the restoration ability of
EL0M for {\it exact} data with random sampling. We randomly select $50\%, 40\%, 30\%, 20\%$ frequencies from the set $\left\{0.5,1,1.5, \ldots, 15\right\}$. We report numerical results in Table \ref{table:gaussian_real_envl0L1_randsamp}, where each SNR-value reported is the average of five runs.

\begin{table}
\caption{The SNR results of EL0M with comparison to L1M for recovering time-domain data from incomplete {\it exact} frequency data with random sampling. } \vskip 4mm
\centering
    \begin{tabular}{c|c|c|c|c}
    \hline
    \hline
              &  {\rm 50\%}         &  {\rm 40\%}          & {\rm 30\%}        &  {\rm 20\%}      \\
     \hline
     L1M   & 21.0756    & 21.2153   & 10.8219   & 3.7084       \\
     EL0M  & {\bf 37.0167}   & {\bf 30.9599}   & {\bf 16.0520} & {\bf 4.8128 } \\
     \hline
   \end{tabular}
\label{table:gaussian_real_envl0L1_randsamp}
\end{table}

We then test the restoration ability of EL0M with comparison to L1M for {\it exact} frequency data of only low Fourier frequencies. In this test, we choose $f_{max}$ as  $7.5$, $6$, $4.5$ and $3$, and sample {\it exact} data from intervals $\left[0.5, f_{max}\right]$ with the uniform step size $\Delta f:=0.5$.
The selector matrices $\mathbf{R}$ for this example are chosen according to the values of $f_{max}$.
Note that $f_{max}$ in each of these cases is much smaller than $f_{nmax}:=15$ required by the Nyquist sampling theorem.
We report numerical results in Table \ref{table:gaussian_real_envl0_sample1}.
In Figure \ref{figure:gaussian_real_envl0}, the
figures obtained by EL0M  are presented. Here, ``Original Signal" represents the real signal in time and ``EL0M" represents the signal restored by EL0M.

From Tables \ref{table:gaussian_real_envl0L1_randsamp} and  \ref{table:gaussian_real_envl0_sample1}, we find that EL0M outperforms L1M significantly in both of the tests. Moreover, we observe that the results from the uniform sampling are better than those from random sampling. These numerical results and Figure \ref{figure:gaussian_real_envl0} confirm that EL0M can well recover the first derivative of the Gaussian function with exact lower frequency data .


\begin{table}
\caption{The SNR results of EL0M with comparison to L1M for recovering time-domain data from {\it exact} frequency data with uniform sampling from intervals $\left[0.5,f_{max}\right]$ with $\Delta f=0.5$. } \vskip 4mm
\centering
    \begin{tabular}{c|c|c|c|c}
    \hline
    \hline
       \backslashbox{Model}    {$f_{max}$} &  7.5{\rm }         &  6{\rm }          & 4.5{\rm }        &  3{\rm }      \\
     \hline
     L1M   & 24.5150     & 16.0542   & 13.6275    & 13.4741       \\
     EL0M  & {\bf 39.0745}   & {\bf 33.8784}   & {\bf 34.6615} & {\bf 21.2628 } \\
     \hline
   \end{tabular}
\label{table:gaussian_real_envl0_sample1}
\end{table}

\begin{figure}[h]
\centering
\begin{tabular}{cc}
\includegraphics[width=2.6in]{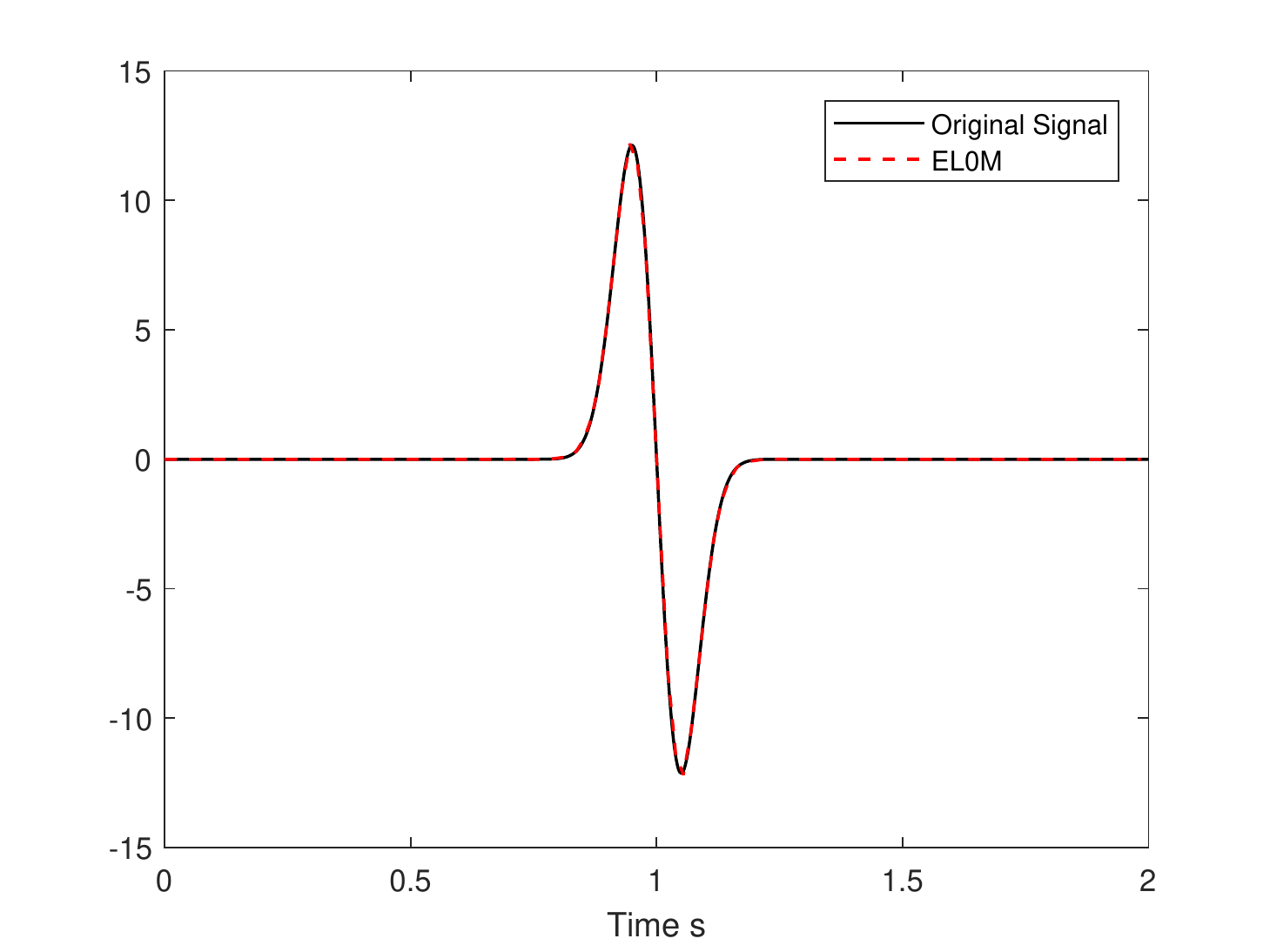}&\includegraphics[width=2.6in]{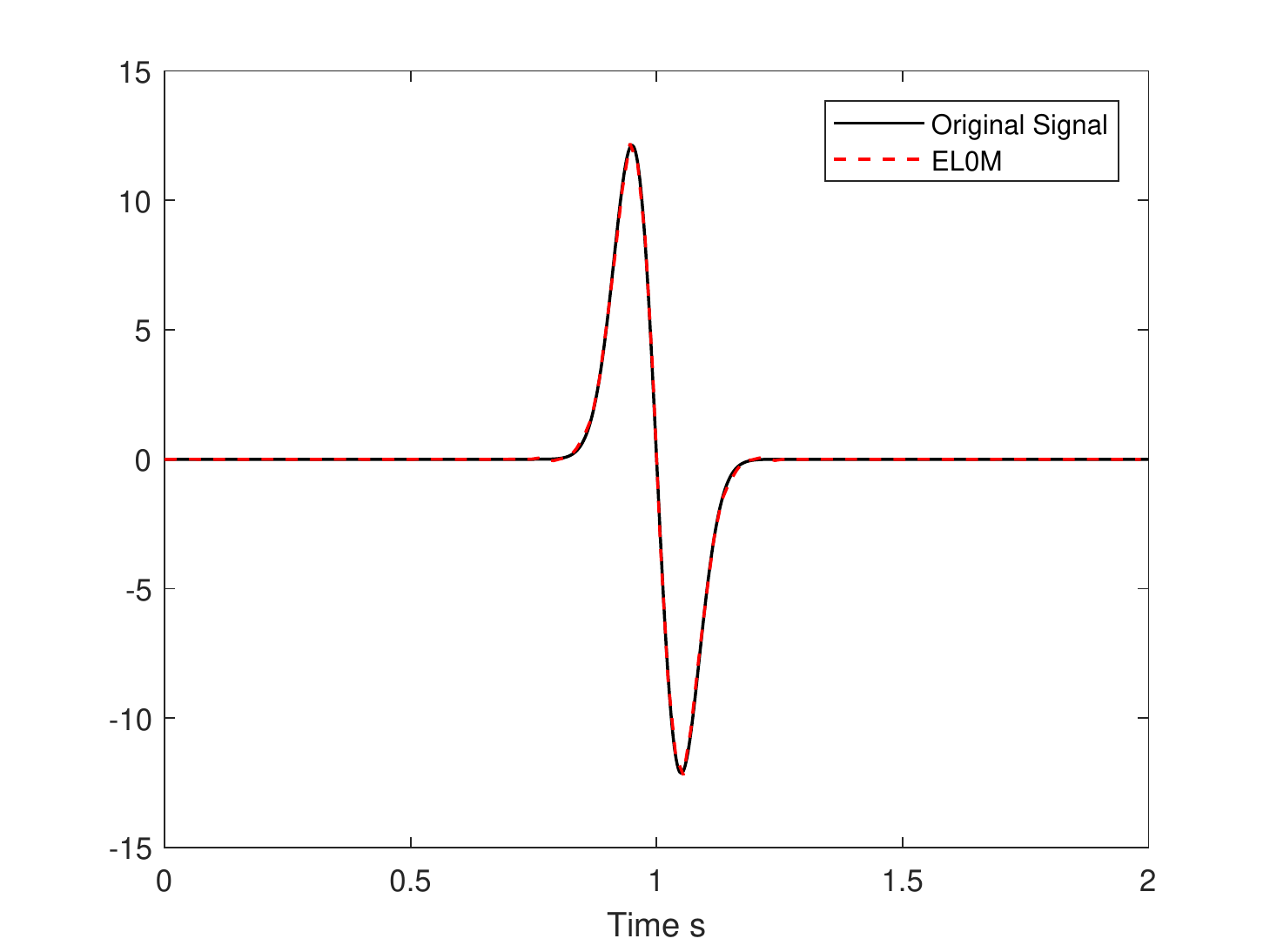}\\
(a)&(b)\\
\includegraphics[width=2.6in]{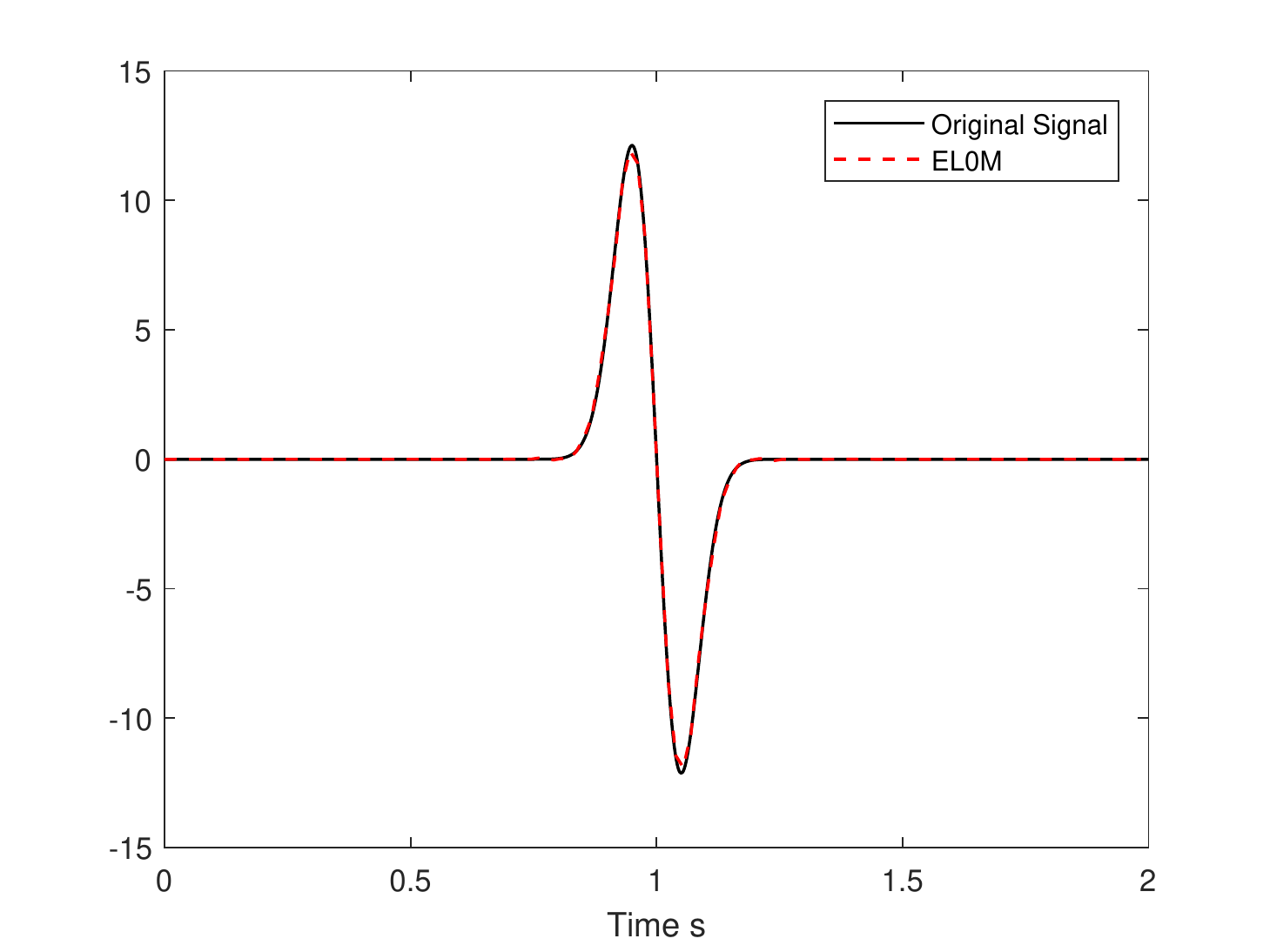}&\includegraphics[width=2.6in]{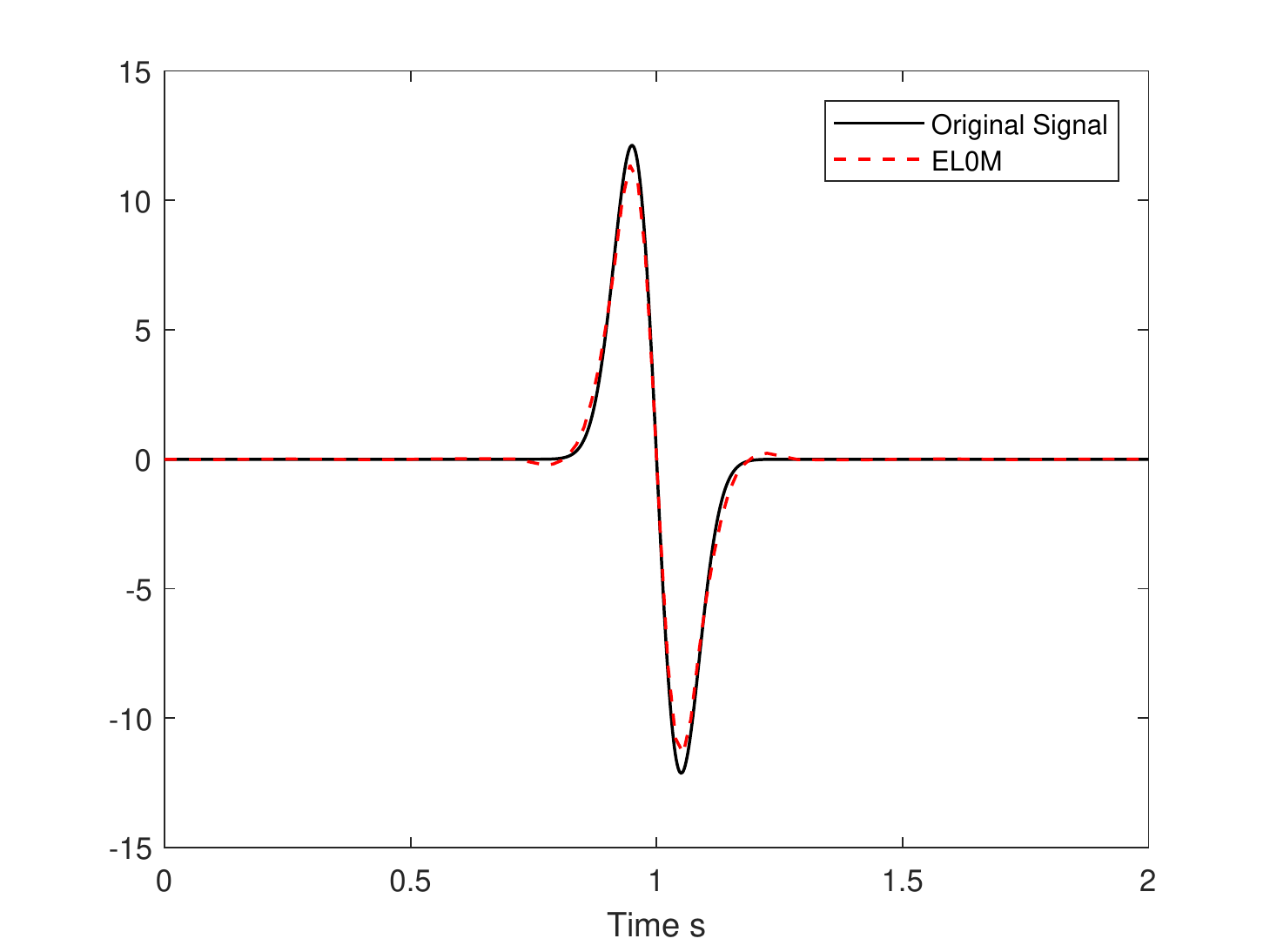}\\
(c)&(d)
\end{tabular}
\caption{ EL0M,\ $\Delta f=0.5 $,\ $l=3$:
(a) $f_{max}=7.5$, $\gamma=0.0202$, $\beta=0.0100$, (b) $f_{max}=6$,
$\gamma=3.1053$, $\beta=1.9000$, (c) $f_{max}=4.5$, $\gamma=1.0460$, $\beta=0.6400$,  (d)
$f_{max}=3$, $\gamma=0.6211$, $\beta=0.3800$. }
\label{figure:gaussian_real_envl0}
\end{figure}



\subsection{Recovering time-domain data from noisy low
frequency data}\label{example_gaussian_noised}

We consider in this subsection recovering the first
derivative of Gaussian function from its noisy low
frequency data by model EL0M, with comparison to model L1M. All conditions imposed in this example are the same as those in the last subsection for uniform sampling, except data are contaminated with Gaussian noise of standard deviations $\sigma=0.1,\ 0.3,\ 0.5$, respectively. The tolerance for iterations is again set as $tol=10^{-6}$. We report numerical results for this example in Table \ref{table:gaussian_real_envl0_noise0135-deltf05}, where each SNR-value is the average of five runs. From Table \ref{table:gaussian_real_envl0_noise0135-deltf05}, we find that model EL0M can restore well signals from their {\it noisy} low frequency  data and model EL0M once again outperforms model L1M significantly.



\begin{table}[h]
\caption{A summary of the SNR results of  L1M and EL0M for recovering time-domain data from {\it noisy} data sampled from intervals $\left[0.5,f_{max}\right]$ with $\Delta f=0.5$ and $\sigma=0.1,~0.3,~0.5$. }
\begin{center}\scriptsize
\begin{tabular}{c|cccc}\hline
  \backslashbox{Model}    {$f_{max}$} & $7.5$ & $6$ & $4.5$ & $3$  \\
\hline \multicolumn{5}{c}{$\sigma=0.1$} \\ \cline{1-5}
L1M &  18.8278  &  13.4524 & 13.3601 & 13.0725  \\
EL0M & {\bf 24.9187} &  {\bf 24.0535} & {\bf 24.1239} & {\bf  14.1184}  \\
\hline \multicolumn{5}{c}{$\sigma=0.3$} \\ \cline{1-5}
L1M & 15.6276 &  12.5826 & 12.1345 &10.7584  \\
EL0M &  {\bf 18.5610} &  {\bf 17.8948}
& {\bf 14.5159} & {\bf 12.5585} \\
 \hline\multicolumn{5}{c}{$\sigma=0.5$} \\ \cline{1-5}
L1M &  11.5818 &  11.4273 &10.3744 & 6.7984  \\
EL0M &  {\bf 16.2983} &  {\bf 13.9437 } & {\bf 13.5271} & {\bf 11.9359 } \\
\hline
\end{tabular}
\end{center}
\label{table:gaussian_real_envl0_noise0135-deltf05}
\end{table}

\subsection{The homogeneous velocity model}\label{subsec:Homo_Model_1}

In this subsection,  we consider the homogeneous velocity model for generating synthetic seismograms with a source function $q$.
This requires solving equation \eqref{wave_equation_time_domain} with constant velocity $v$ illustrated by Figure \ref{figure:The_velocity_model} (a) and the source function  $g(x,z,t):=\delta(x-x_s,z-z_s)q(t)$, where $\delta$ denotes the Dirac delta function and $(x_s, z_s)$ is the coordinates of the source location. We will consider two source functions the Ricker wavelet and the first derivative of the Gaussian function.

\begin{figure}[h]
\centering
\begin{tabular}{cc}
\includegraphics[width=3.1in]{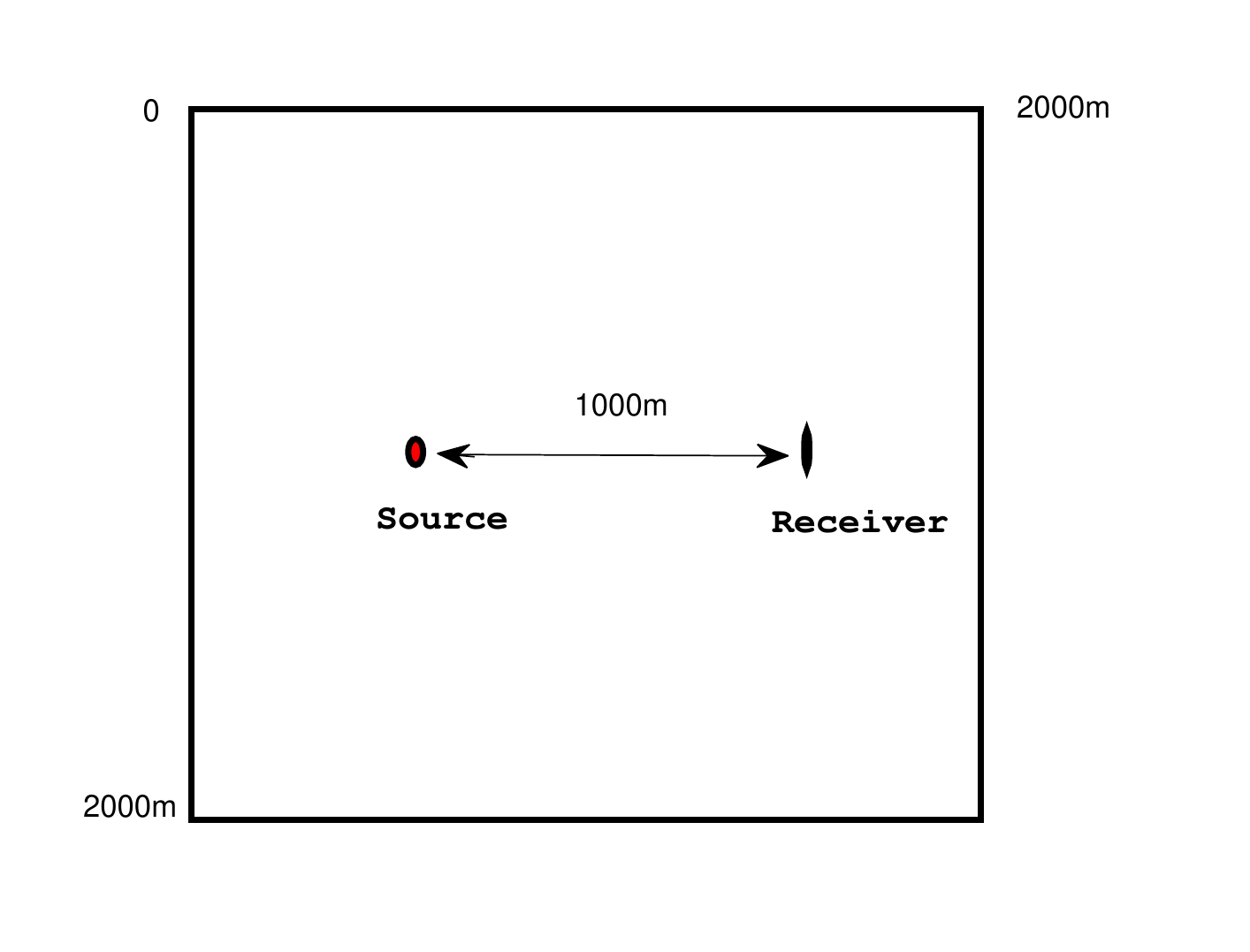}&
\includegraphics[width=3.1in]{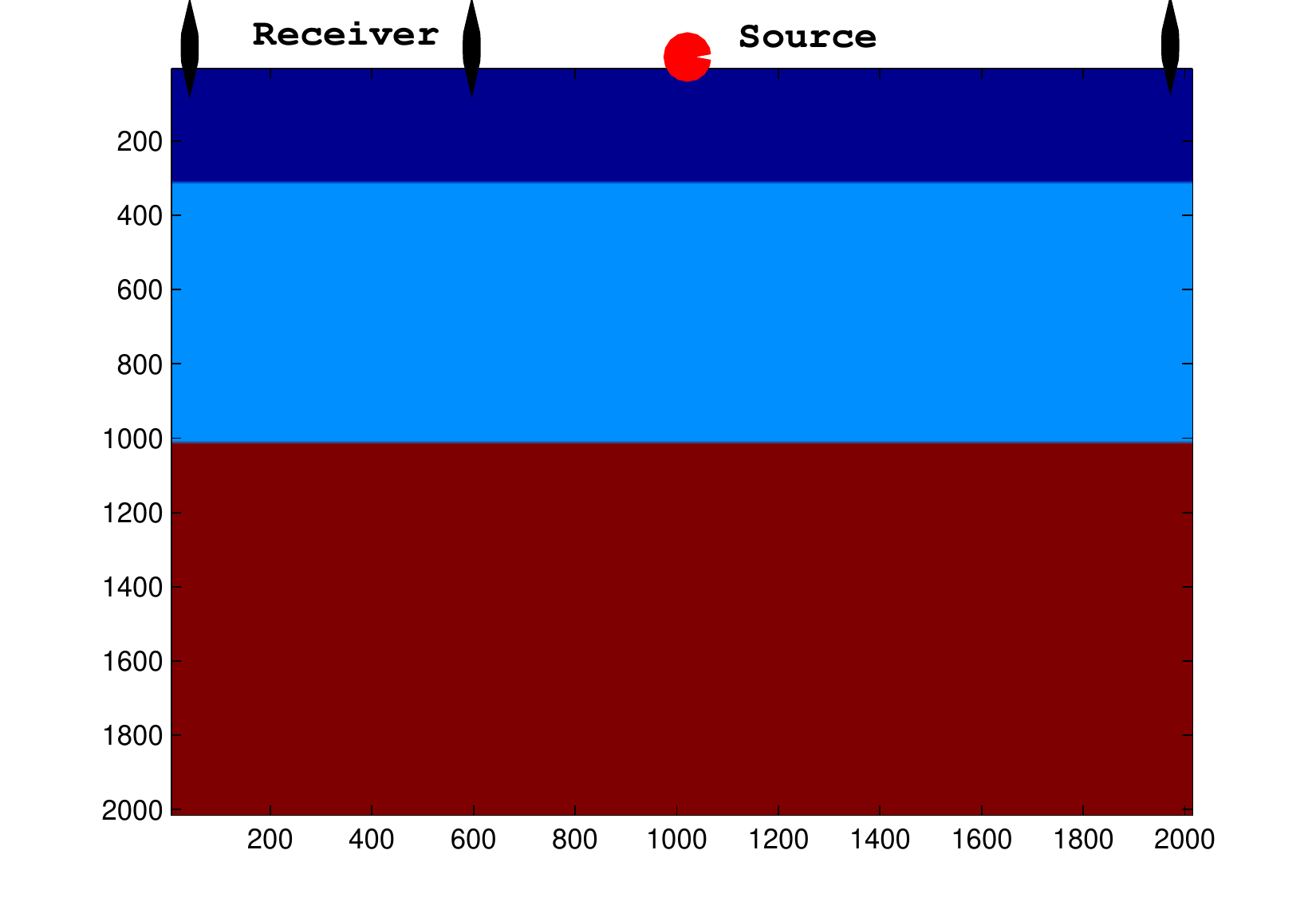}
\\
(a)&(b)\\
\end{tabular}
\caption{ Velocity models: (a) The homogenous model; (b) The layered
model.} \label{figure:The_velocity_model}
\end{figure}


We employ the Frequency domain modeling to generate the synthetic seismogram by solving a sequence of 2D Helmholtz equations \eqref{Helmholtz_Equation} with different frequencies and then inverting the Fourier transform using model EL0M with Algorithm  \eqref{eq:fixed-eqs_two_var_iter}. Our interested domain is $[0,2000]\times[0,2000]$, having meter $m$ as the length unit. The velocity  $v$ of the medium is $1500m/s$, where $s$ denotes the time unit second. The receiver described in Section \ref{sec:MAM4} is located at the point
$(x_r,z_r):=(1500, 1000)$. The source location is $(x_s,z_s)=(500,1000)$, and the  natural maximum frequency of the source function is denoted by $f_{nmax}$.
To obtain the synthetic seismogram at the receiver point $(x_r,z_r)$, we  choose proper parameters $T$, $M$,  and let $\Delta f:=\frac{1}{T}$,
where $T,\ M,\  \Delta f$ are defined in Section \ref{sec:MAM4}. By $f_{min}$ and $f_{max}$ we denote respectively the minimum and maximum frequency used in generating the synthetic seismogram. We let $N_H$ be the smallest positive integer such that $\frac{f_{max}}{\Delta f}\leq N_H$. Thus,  $N_H$ is the number of Helmholtz equations \eqref{Helmholtz_Equation} we need to solve for a particular $f_{max}$ chosen.
If $f_{max}$ were chosen as $f_{nmax}$, we need to solve many Helmholtz equations
\eqref{Helmholtz_Equation} and some of these equations have large wave numbers. We will choose $f_{max}$
smaller than $f_{nmax}$ and reconstruct the  synthetic seismogram (the solution of equation
\eqref{wave_equation_time_domain}) by inverting an incomplete Fourier transform (solutions of  Helmholtz
equations \eqref{Helmholtz_Equation} with only small wave numbers).
To this end, we sample frequencies
$f$ from intervals $[f_{min}, f_{max}]$, with  four different  $f_{max}$ values
and solve the resulting Helmholtz equations  \eqref{Helmholtz_Equation} by employing the finite
difference method developed in \cite{CCFW}, with  the same step size $h:=10$ for both variables $x$ and $z$. To invert the corresponding incomplete Fourier transform, we construct the tight framelet matrix $\mathbf{W}$ with a parameter $l$, and then apply model EL0M with Algorithm  \eqref{eq:fixed-eqs_two_var_iter}.

For comparison purposes the exact solution of the wave equation (\ref{wave_equation_time_domain}) with $v$ and $g$ described above can be obtained by the D'Alembert formula:
$$
u(x,z,t)=\frac{1}{4\pi r}q\left(t-\frac{r}{v},f_0\right),
$$
where $r:=\sqrt{\left(x-x_s\right)^2+\left(z-z_s\right)^2}$.
In this experiment, we take the signal
$u(x_r,z_r,t)$, $t\in[0,T]$ obtained by the D'Alembert formula as the original signal for the comparison purpose.

In our first example, we choose $q(t):=R(t,f_0)$, where $R(t,f_0)$  is the Ricker wavelet  defined by
\begin{eqnarray}
R(t,f_0):=(1-2\pi^2f_0^2t^2)\exp(-\pi^2f_0^2t^2), \label{Ricker_wave_ME}
\end{eqnarray}
with $f_0:=25$. Note that the natural maximum frequency of the  Ricker wavelet is approximately equal to $60$. In this example,  we choose $T:=1.3440s$, $M:=168$ and  $l:=3$. If $f_{max}$ were chosen as $f_{nmax}:=60$, we would need to solve $N_H:=81$ number of Helmholtz equations \eqref{Helmholtz_Equation}. This requires significantly large computational efforts to perform the task. We instead sample frequencies $f$ from intervals
$[f_{min}, f_{max}]$,  with $f_{min}=1$ and $f_{max}<f_{nmax}$.
We illustrate in Figure \ref{figure:Richer_double_loop_real} the synthetic seismogram generated from this source function by model EL0M, with comparison to the original signal and those by the IDFT and L1M.  In Figure \ref{figure:Richer_double_loop_real},  all results of IDFT are obtained with frequencies sampled  from the interval $\left[1, 60\right]$, while the synthetic seismograms generated by L1M and EL0M are obtained with frequencies sampled from intervals $\left[1, f_{max}\right]$, where $f_{max}$ are $54$, $48$, $42$ and $36$, respectively.

\begin{figure}[h]
\centering
\begin{tabular}{cc}
\includegraphics[width=2.6in]{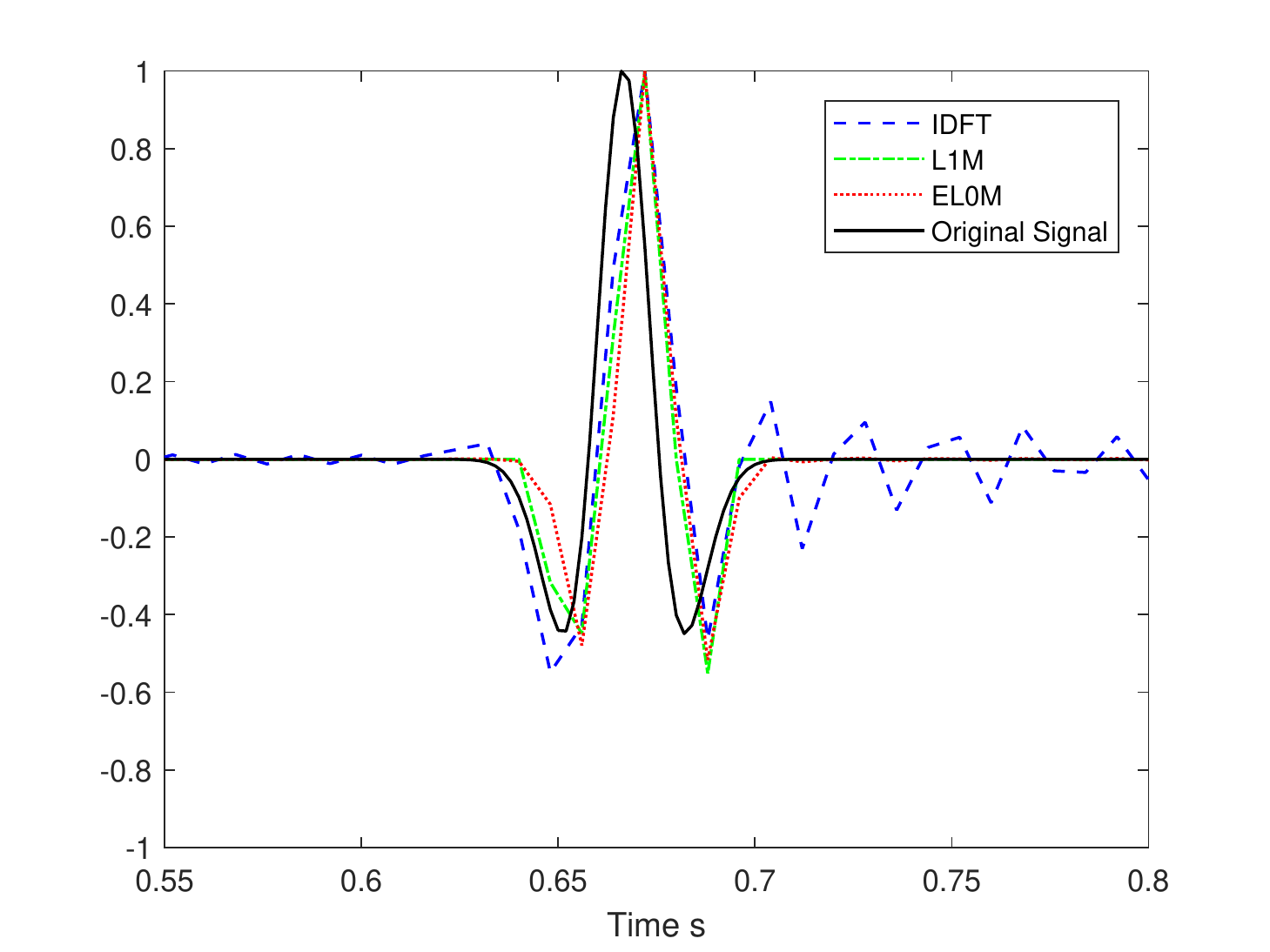}&\includegraphics[width=2.6in]{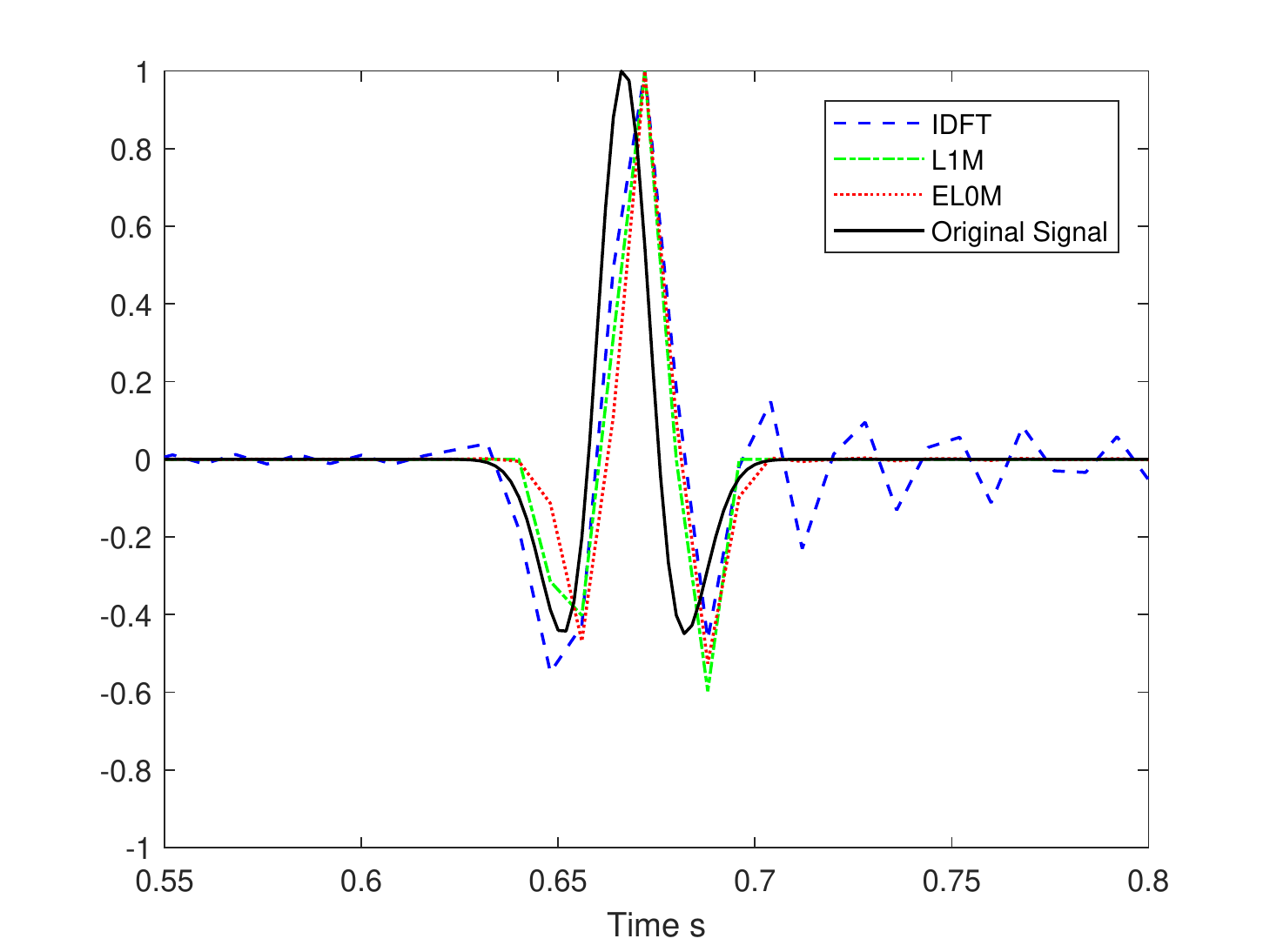}\\
(a)&(b)\\
\includegraphics[width=2.6in]{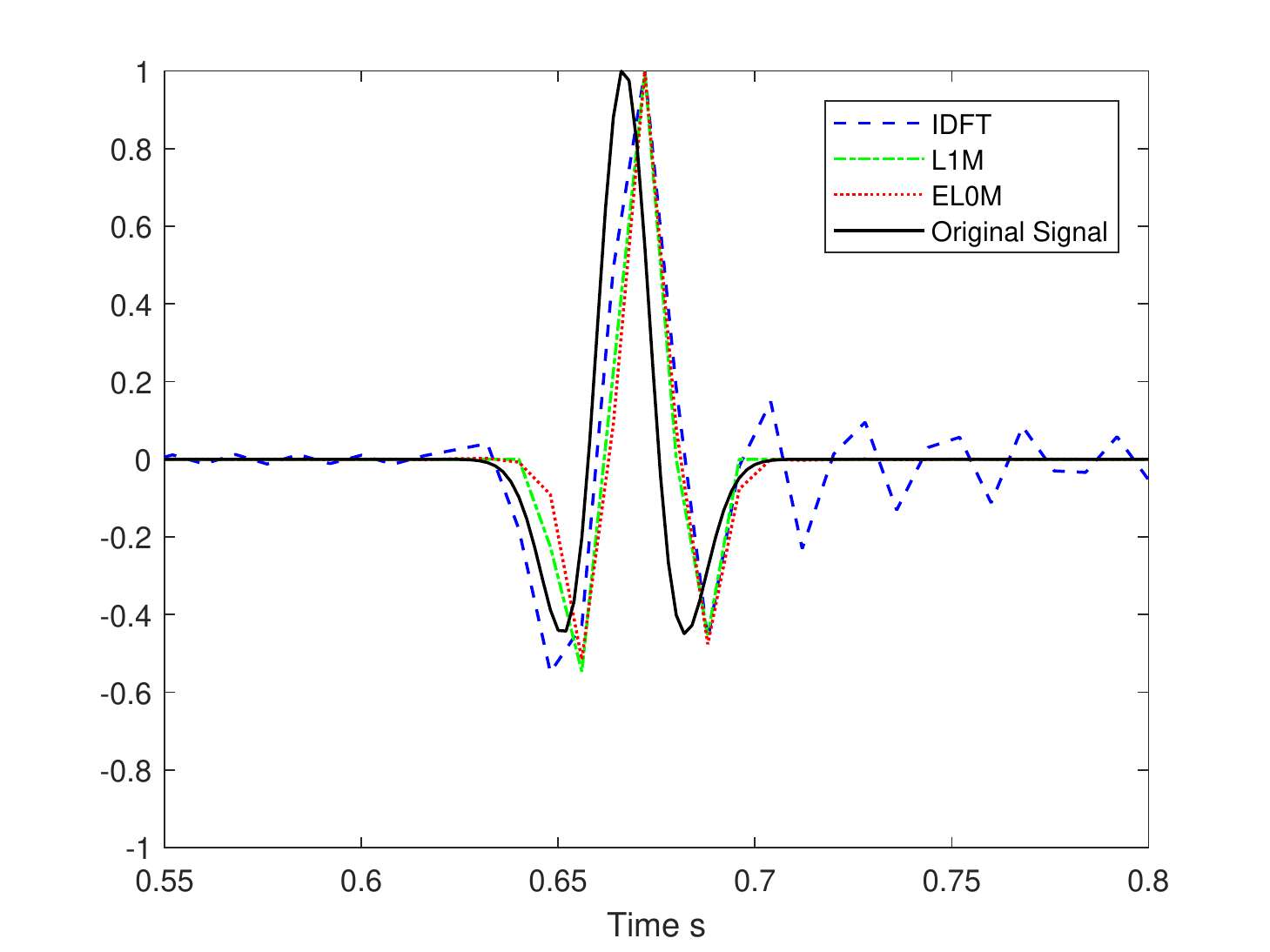}&\includegraphics[width=2.6in]{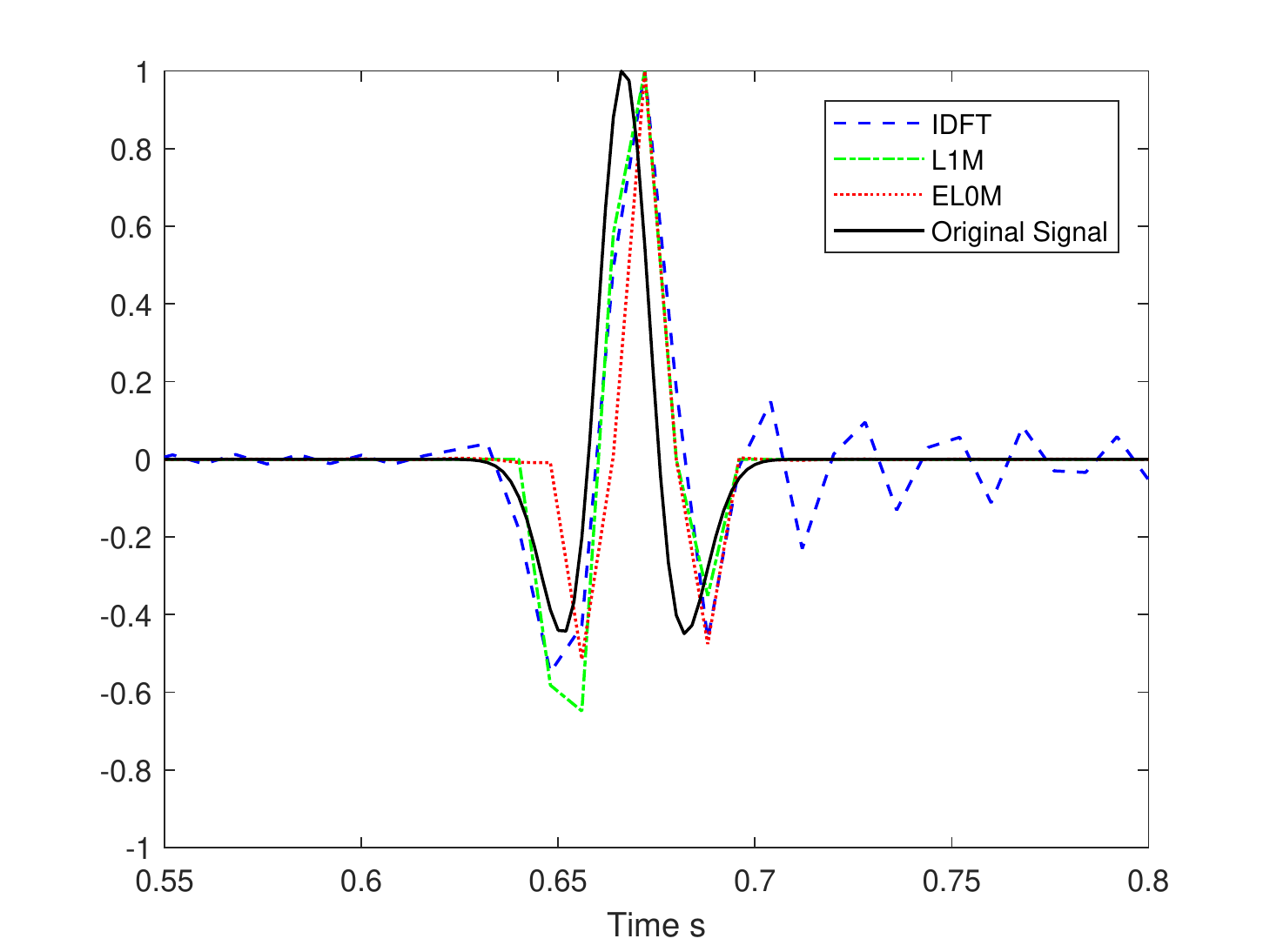}\\
(c)&(d)
\end{tabular}
\caption{ Synthetic seismograms generated by different methods for the homogeneous model with the Ricker wave as the source function: (a) $f_{max}=54$, (b) $f_{max}=48$,  (c) $f_{max}=42$,  (d) $f_{max}=36$.}
\label{figure:Richer_double_loop_real}
\end{figure}


Our second example considers $q(t):=G(t,t_0,\tilde{\alpha})$, where $G(t,t_0,\tilde{\alpha})$ is the first derivative of the Gaussian function defined by \eqref{Gaussian_time}, with $t_0=0.3,~\tilde{\alpha}=200$. The natural maximum frequency of the first derivative of the Gaussian function is approximately equal to $15$, that is, $f_{nmax}=15$. In this example, we choose $T:=2s$, $M:=129$, $l:=4$ and $f_{max}<f_{nmax}$. Specifically, we sample frequencies $f$ from intervals
$[f_{min}, f_{max}]$, with $f_{min}=0.5$ and $f_{max}=9,
7.5, 6, 4.5$. We illustrate in Figure  \ref{figure:Gaussian_double_loop_Real} the synthetic seismogram generated from this source function by model EL0M, with comparison to the original signal and those by the IDFT and L1M. In Figure \ref{figure:Gaussian_double_loop_Real},  all results of IDFT are obtained with frequencies sampled  from the interval $\left[0.5, 15\right]$, while the synthetic seismograms generated by L1M and EL0M are obtained with frequencies sampled from intervals $\left[0.5, f_{max}\right]$, where $f_{max}$ are $9$, $7.5$, $6$ and $4.5$, respectively.


\begin{figure}[h]
\centering
\begin{tabular}{cc}
\includegraphics[width=2.6in]{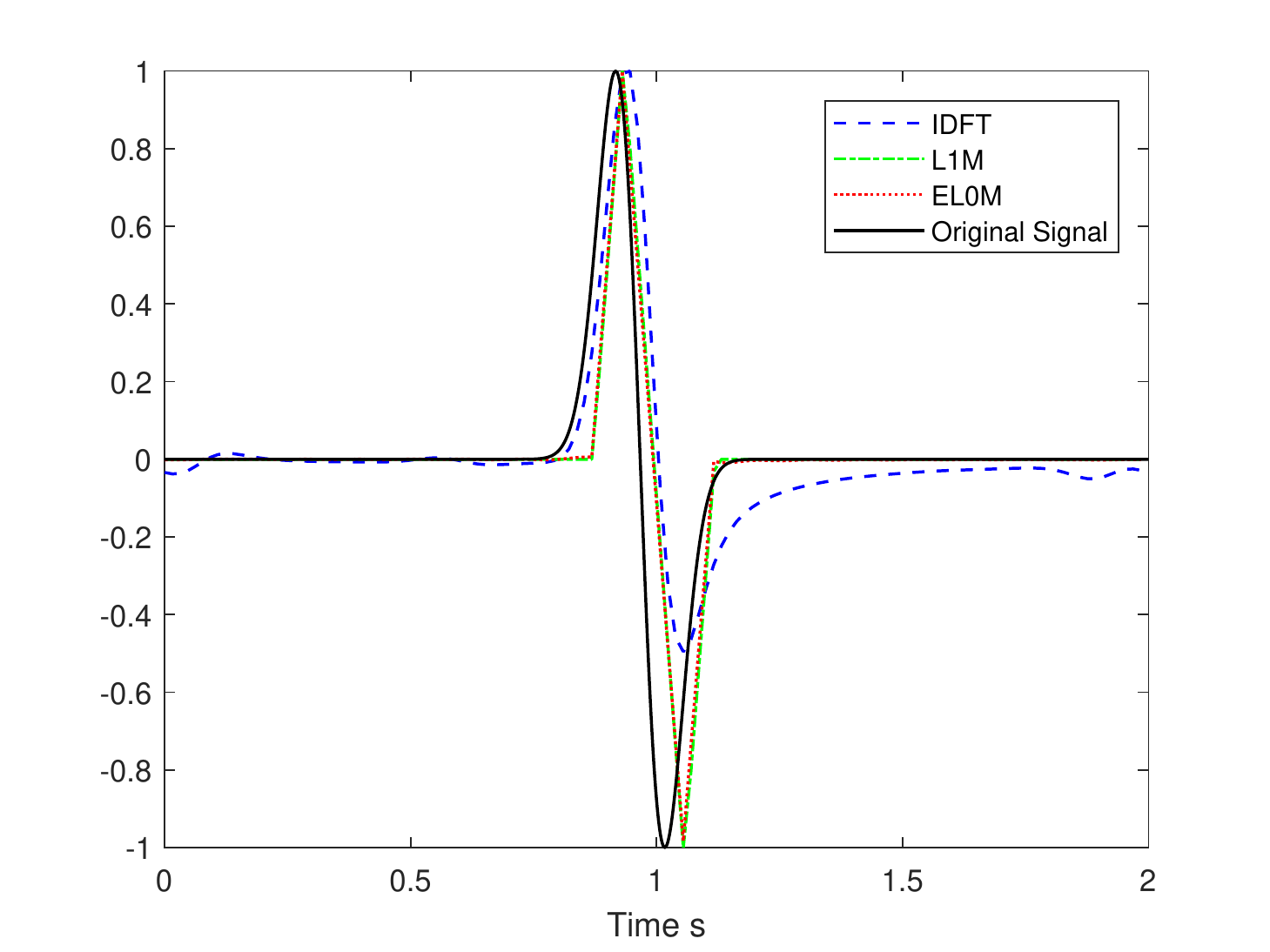}&\includegraphics[width=2.6in]{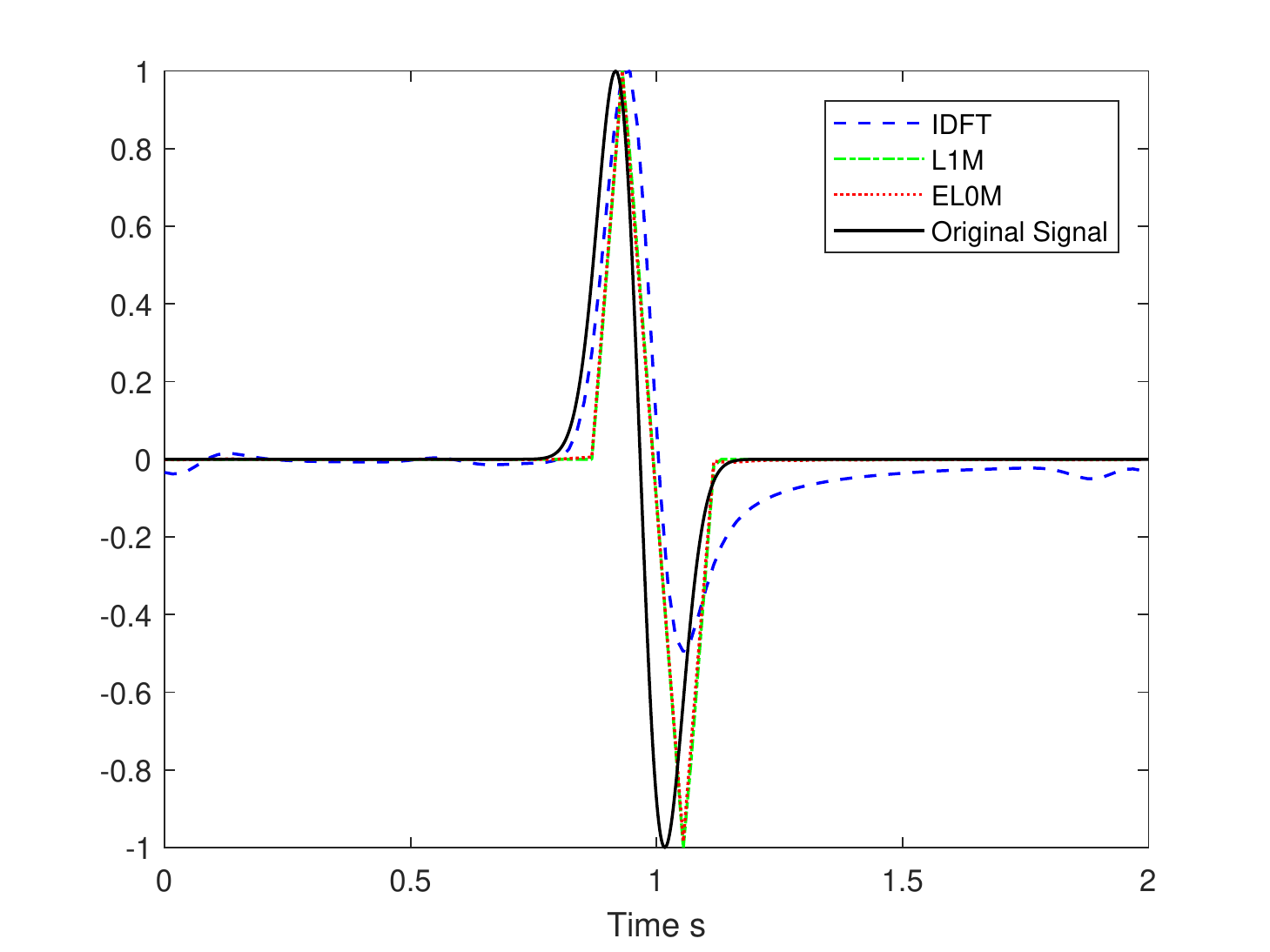}\\
(a)&(b)\\
\includegraphics[width=2.6in]{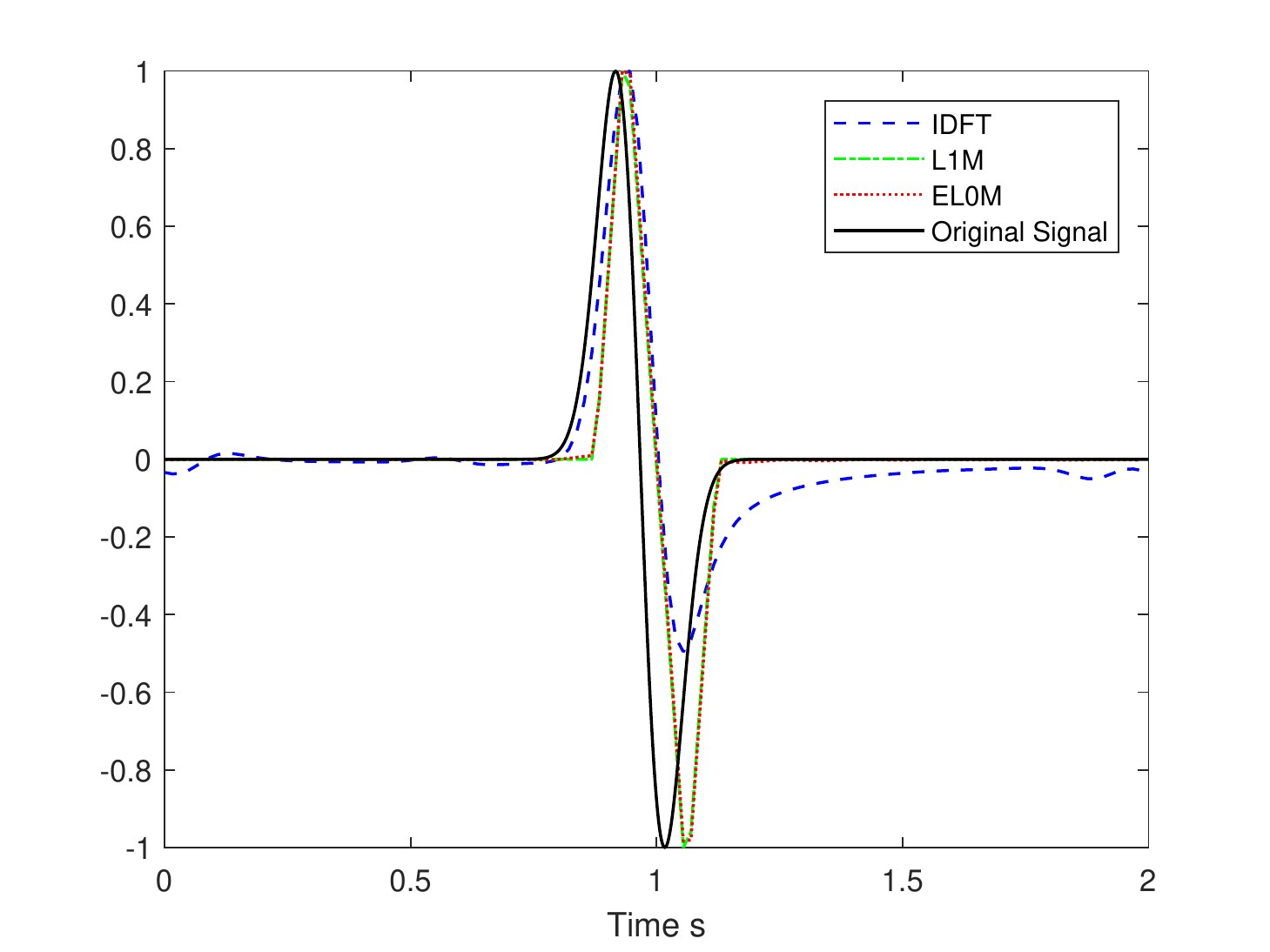}&\includegraphics[width=2.6in]{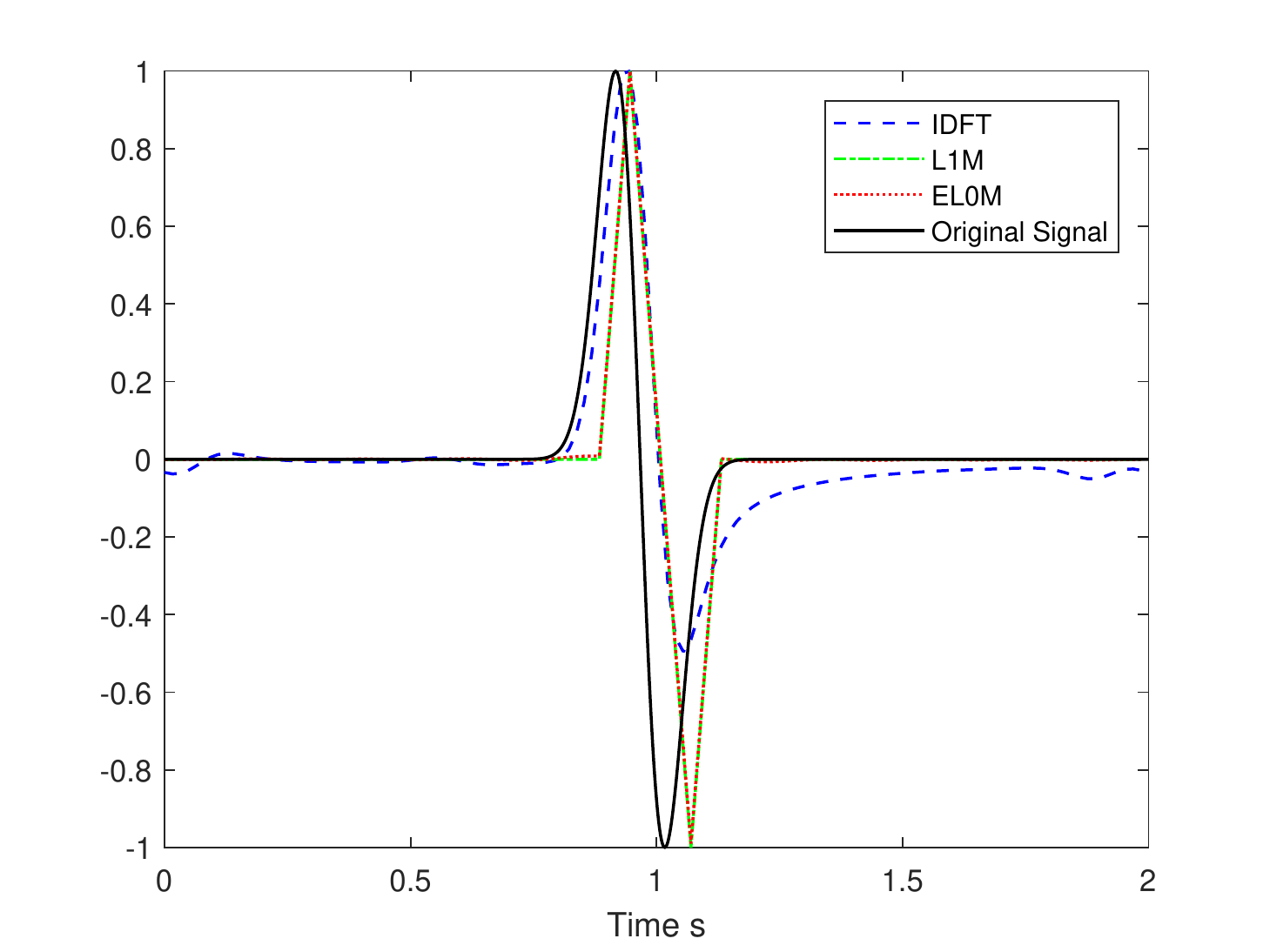}\\
(c)&(d)
\end{tabular}
\caption{ Synthetic seismograms generated by different methods for the homogeneous model with the first order  derivative of the Gaussian function as the source function: (a)$f_{max}=9$,
$\beta=0.73$, $\gamma=21.4757$, (b) $f_{max}=7.5$, $\beta=0.73$, $\gamma=21.4757$, (c) $f_{max}=6$, $\beta=0.75$, $\gamma=21.2899$,  (d) $f_{max}=4.5$, $\beta=0.75$, $\gamma=18.6696$.}
\label{figure:Gaussian_double_loop_Real}
\end{figure}

From  Figures \ref{figure:Richer_double_loop_real} and \ref{figure:Gaussian_double_loop_Real}, in both the examples, we find that even though only low frequencies are used, the best synthetic seismogram is obtained by EL0M. Specifically, the signals recovered by EL0M are much better than those by IDFT, as the IDFT creates many spurious oscillations in the recovered signals, and EL0M performs better than L1M.
In passing, we comment that  there are phase displacements between the original signal and each of the synthetic seismograms obtained by the IDFT, L1M and EL0M (see Figures \ref{figure:Richer_double_loop_real} and \ref{figure:Gaussian_double_loop_Real}), which is due to the difference between the numerical phase velocity and the exact velocity (see FIG 2 in \cite{J1}).

\subsection{The three layered velocity model}\label{subsec:Layered_Model}

In this subsection, we consider the three layered velocity model illustrated by Figure \ref{figure:The_velocity_model} (b) for generating common-shot-point records (shot profiles) with the source function being the same Ricker wavelet as in the last subsection. In other words, we will solve equation \eqref{wave_equation_time_domain}  in the heterogenous medium (the three layered velocity model) by the Frequency domain modeling.

Our interested domain is the same as that in the last subsection. The three layered velocity model is different from the homogeneous velocity model considered in Subsection \ref{subsec:Homo_Model_1}, as there are three velocities: $v=2,000m/s$, $2,500m/s$, $4,000m/s$, from the top to the bottom in this model.
The source function is located at the point $(x_s,z_s):=(0,1000)$,
and the receivers are located on the top ground, that is, they are located at points   $(x_j,0)$, where $x_j:=jh$ and $h:=10$ for $j=0,1,\ldots,200$. In this example, we choose $T:=2.2400s$, $M:=280$ and  $\Delta f=0.4464$. 
We generate the synthetic seismograms in the frequency domain by solving a sequence of 2D Helmholtz equations \eqref{Helmholtz_Equation} using the finite difference
method developed in \cite{CCFW}, with the grid size $\Delta x=\Delta z:=10$. We then invert the Fourier transform using model EL0M with Algorithm  \eqref{eq:fixed-eqs_two_var_iter} 
and obtain the common-shot-point records,  the image of  $u(x_j,0,t_k)$ for $j=0,1,\ldots,200$ and $t_k:=k\Delta t\in[0,T]$ with $\Delta t:=8\times10^{-3}s$.

\begin{figure}[h]
\centering
\begin{tabular}{ccc}
\includegraphics[width=2in]{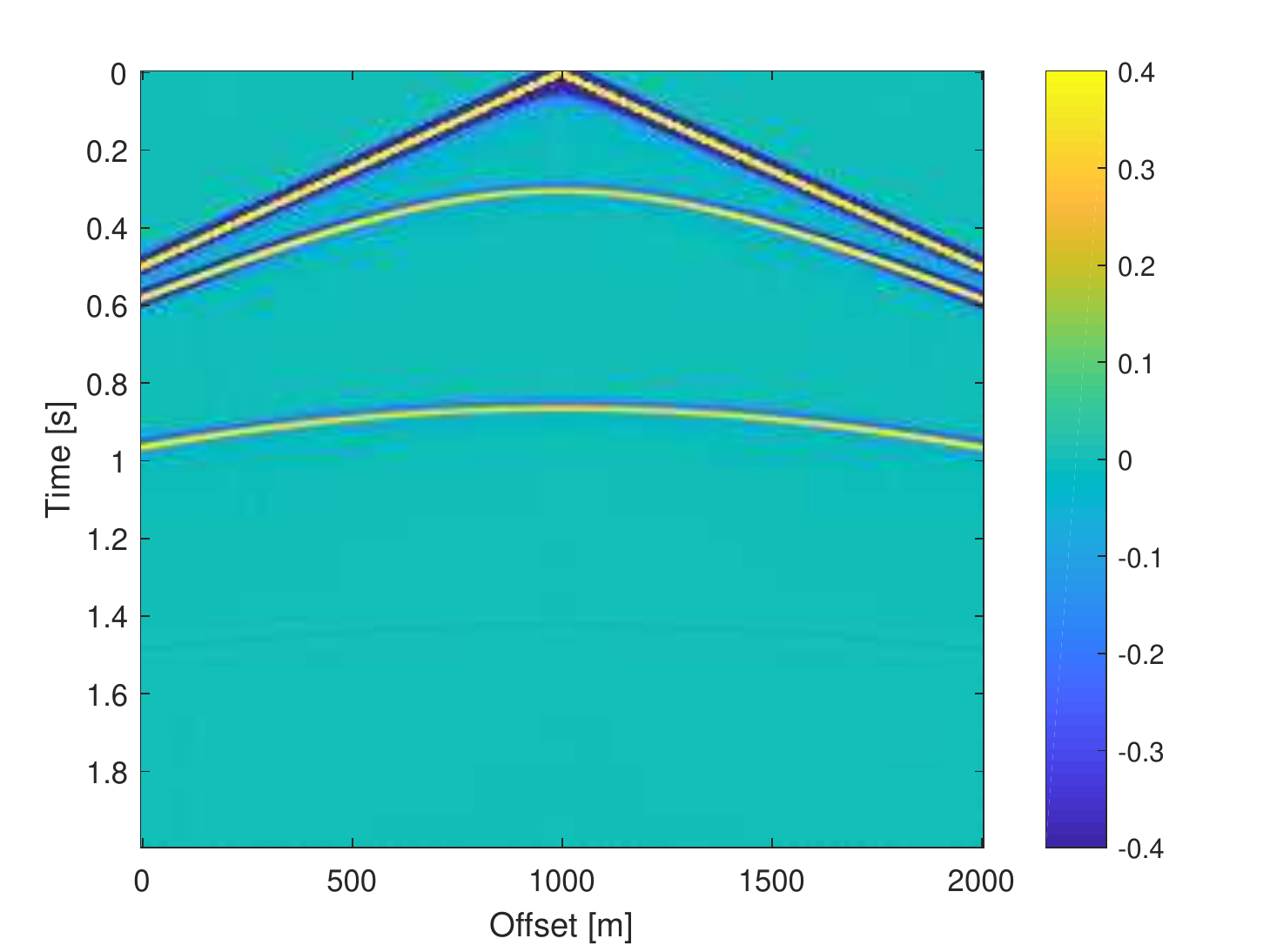}
&\includegraphics[width=2in]{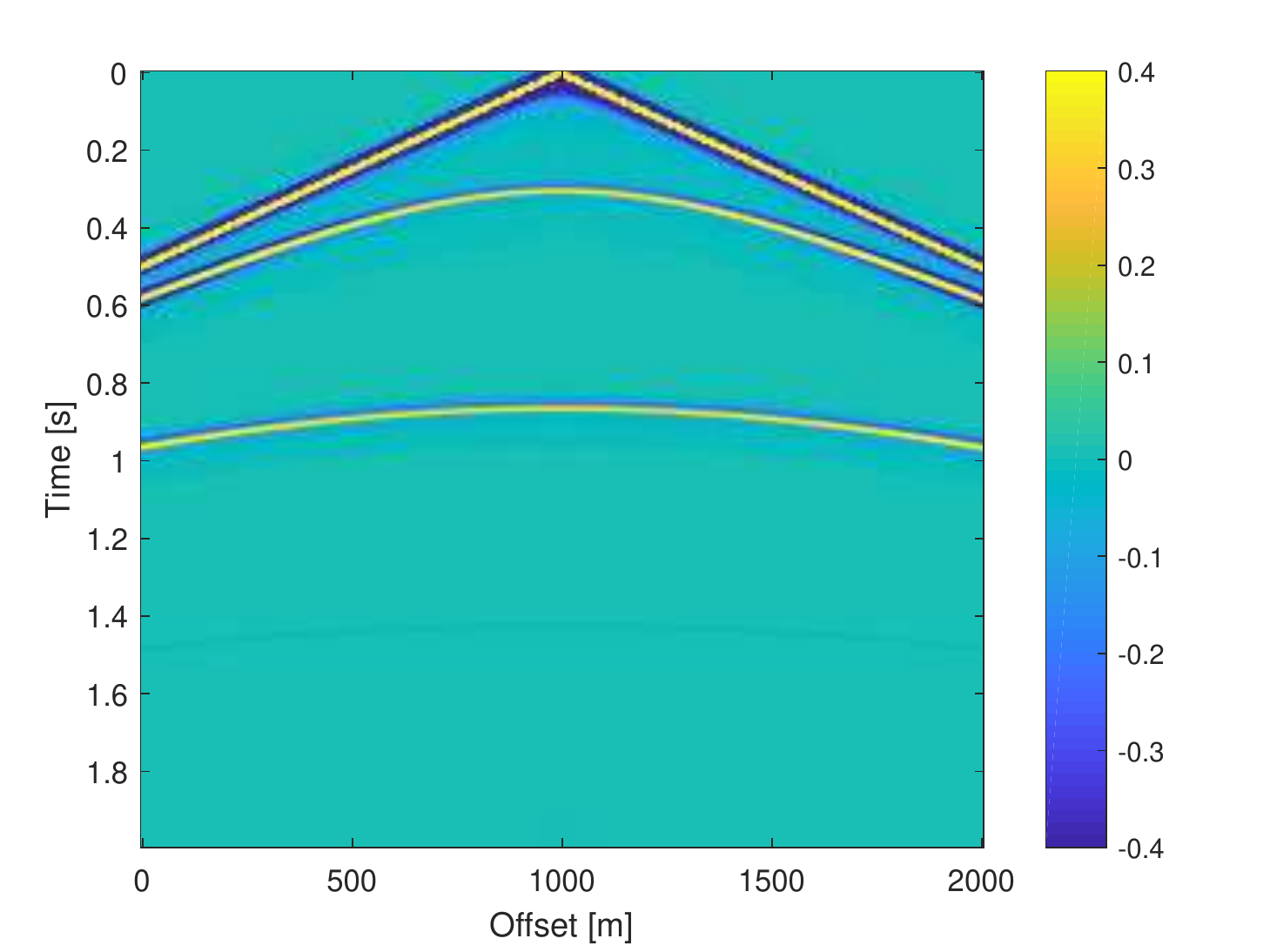}
&\includegraphics[width=2in]{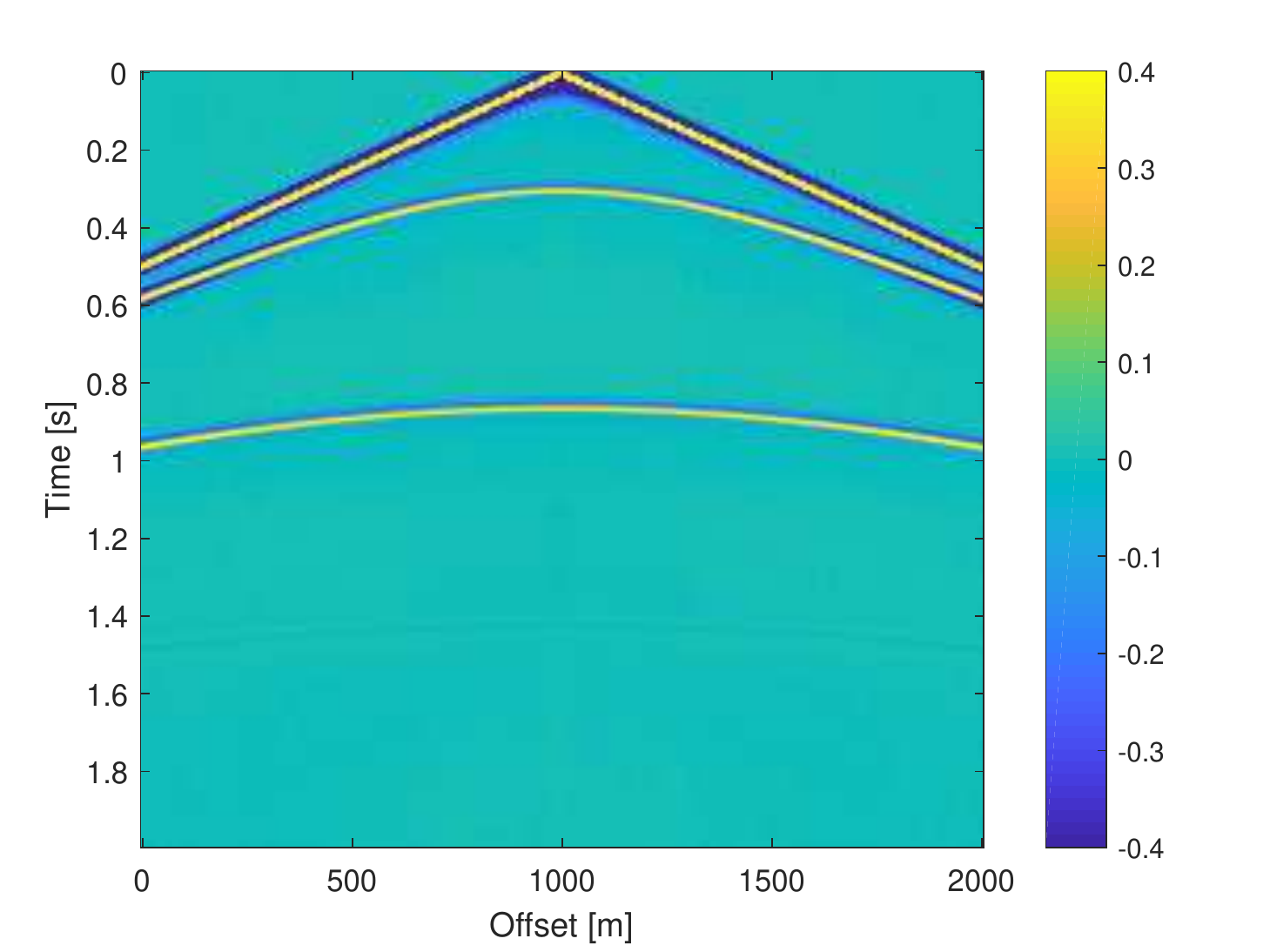}\\
(a)&(b)&(c)
\end{tabular}
\caption{ The common-shot-point records via different methods
with the frequency samples taken from $[1, 60]$:~(a) IDFT; (b)
L1M; (c) EL0M.} \label{figure:layered_model_60HZ}
\end{figure}

\begin{figure}[h]
\centering
\begin{tabular}{ccc}
\includegraphics[width=2in]{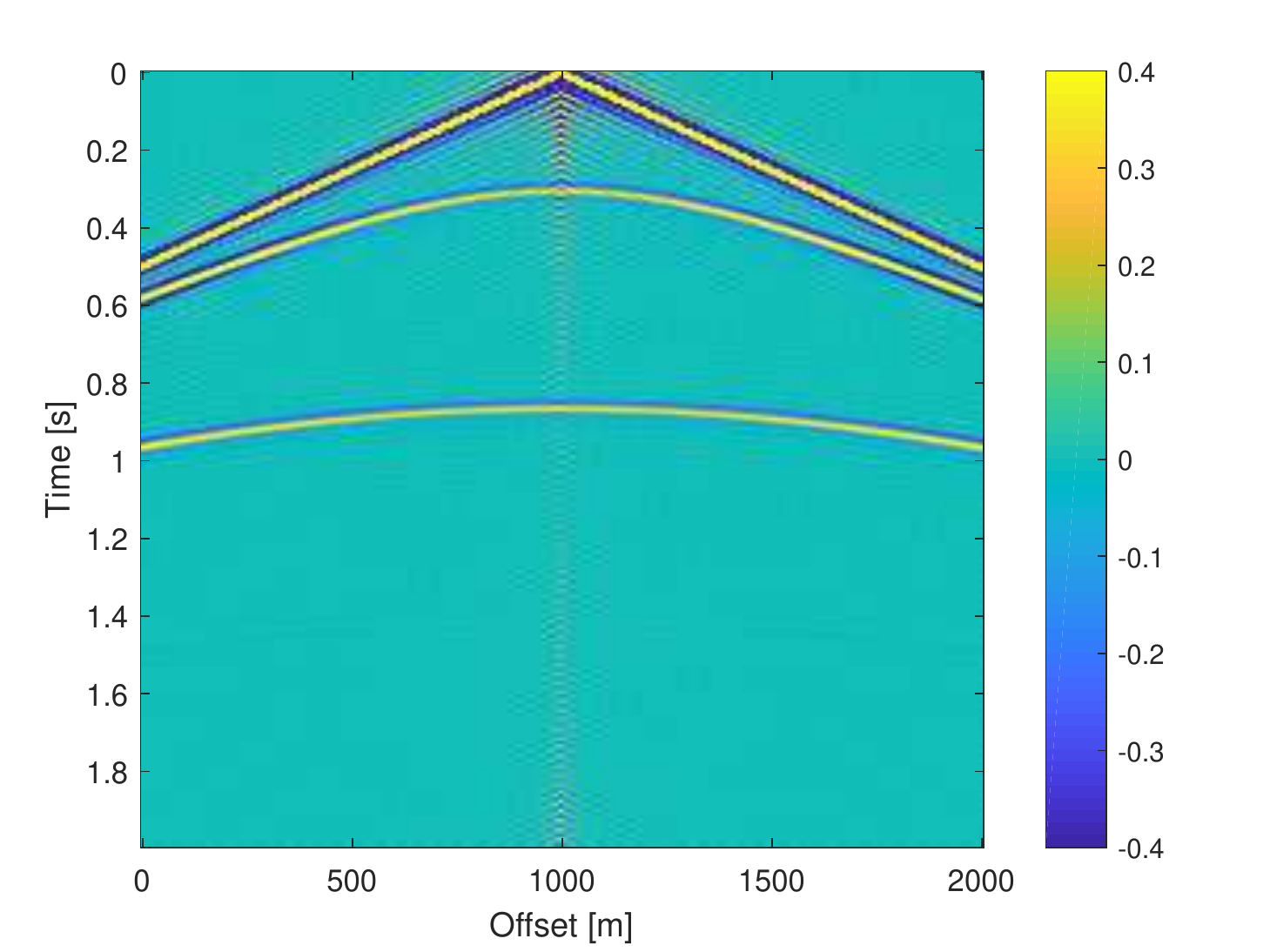}
&\includegraphics[width=2in]{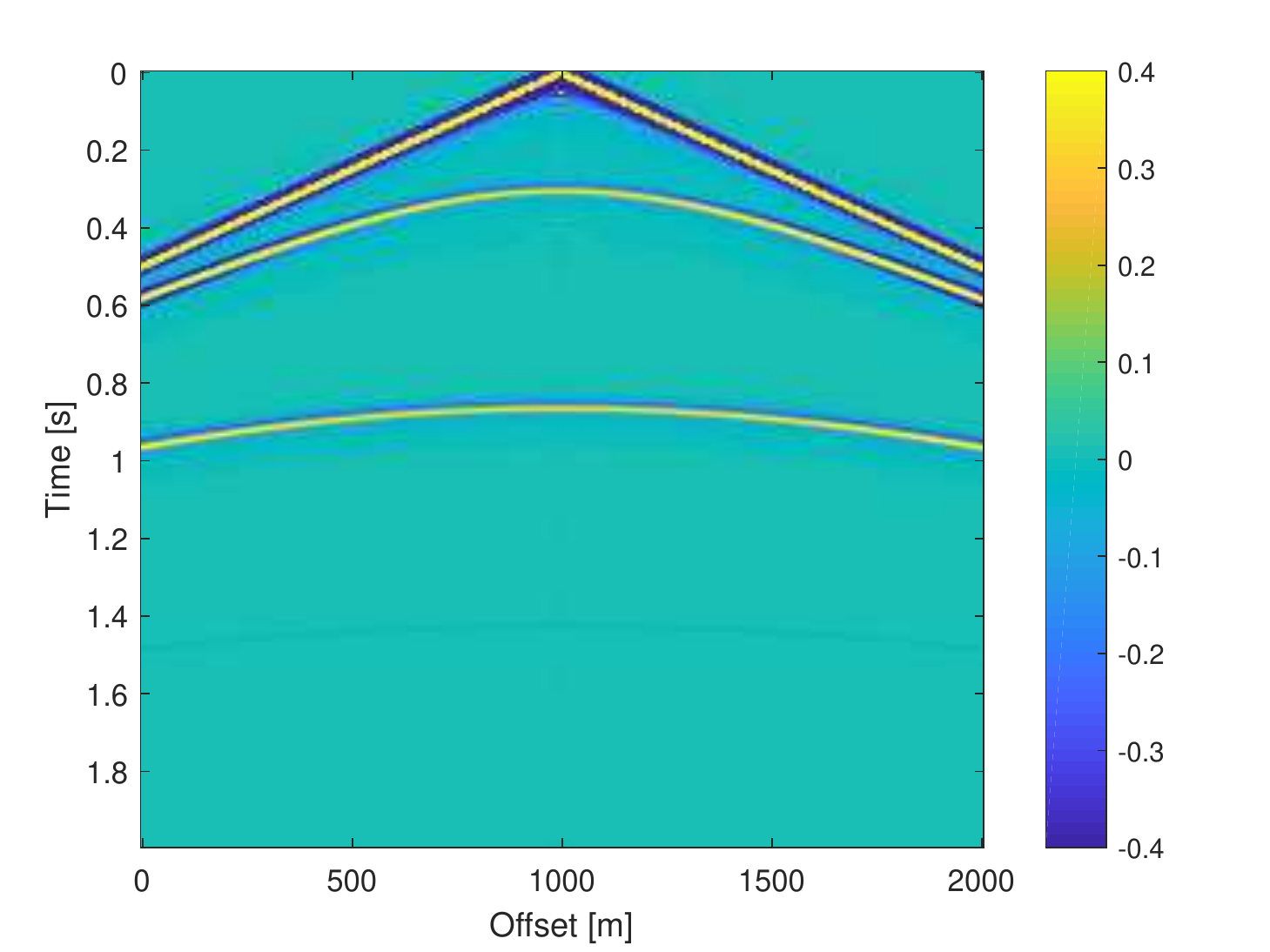}
&\includegraphics[width=2in]{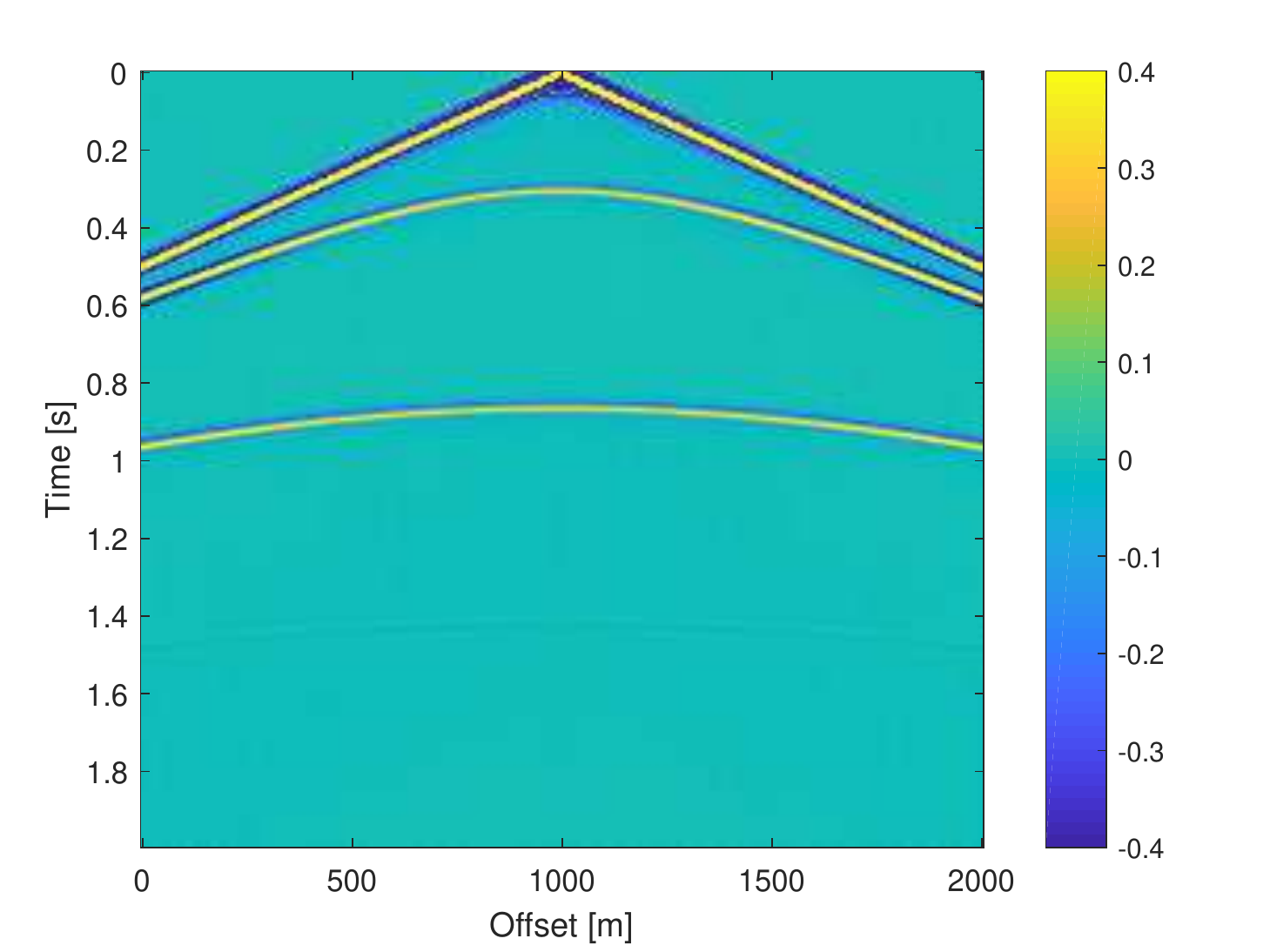}\\
(a)&(b)&(c)
\end{tabular}
\caption{ The common-shot-point records via different methods
with the frequency samples taken from $[1, 42]$:~(a) IDFT; (b)
L1M; (c) EL0M.} \label{figure:layered_model_42HZ}
\end{figure}

\begin{figure}[h]
\centering
\begin{tabular}{ccc}
\includegraphics[width=2in]{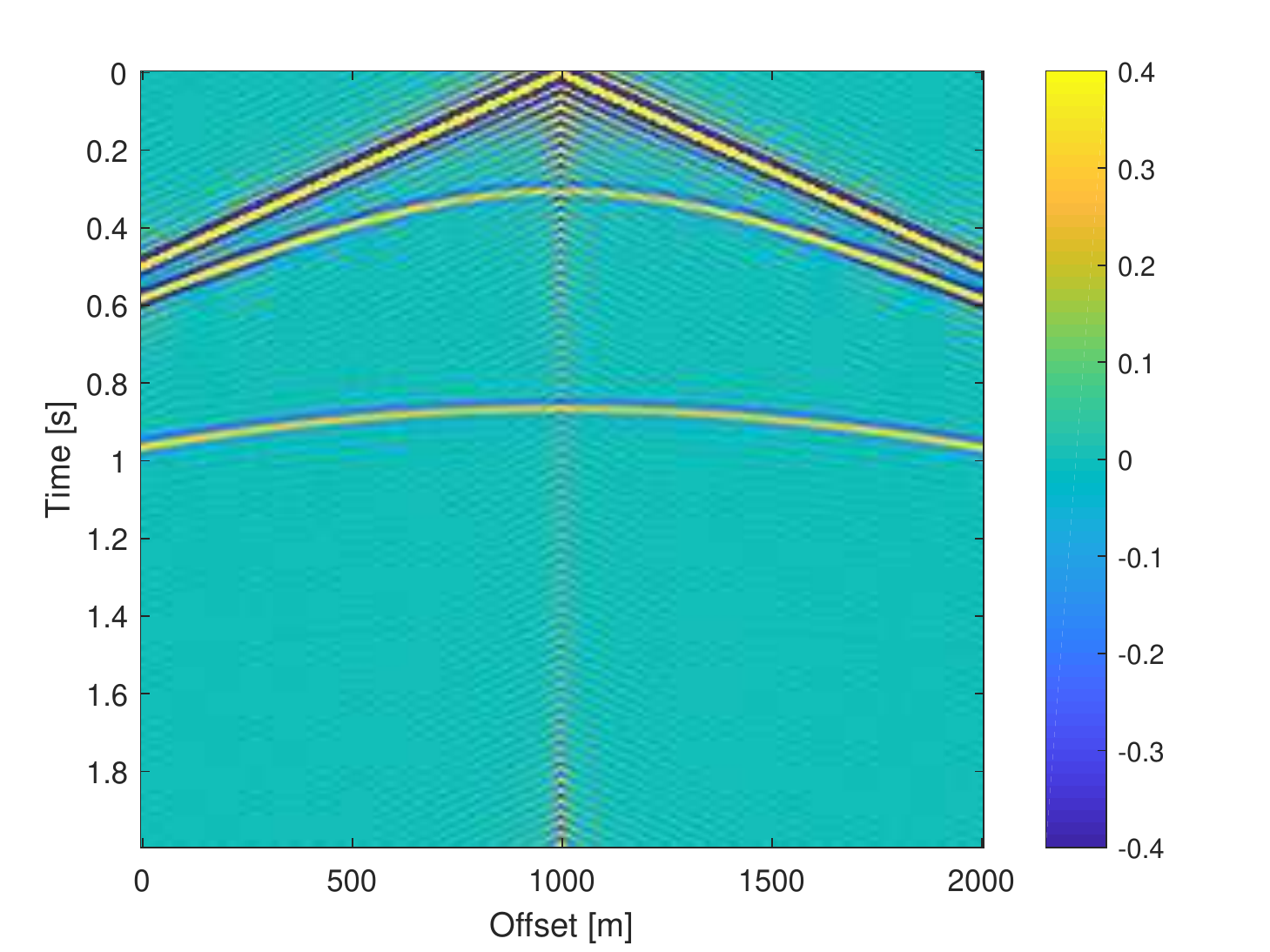}
&\includegraphics[width=2in]{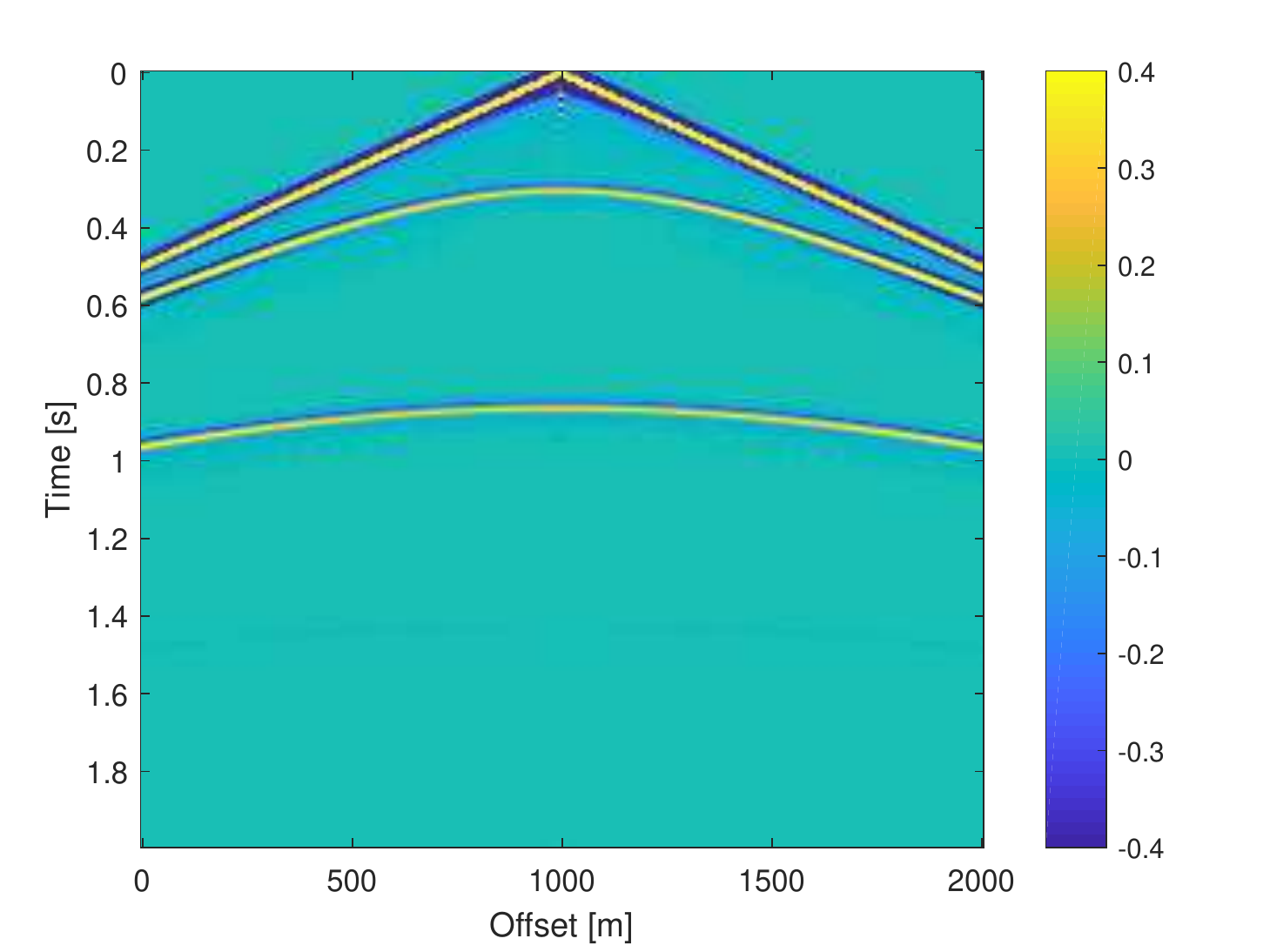}
&\includegraphics[width=2in]{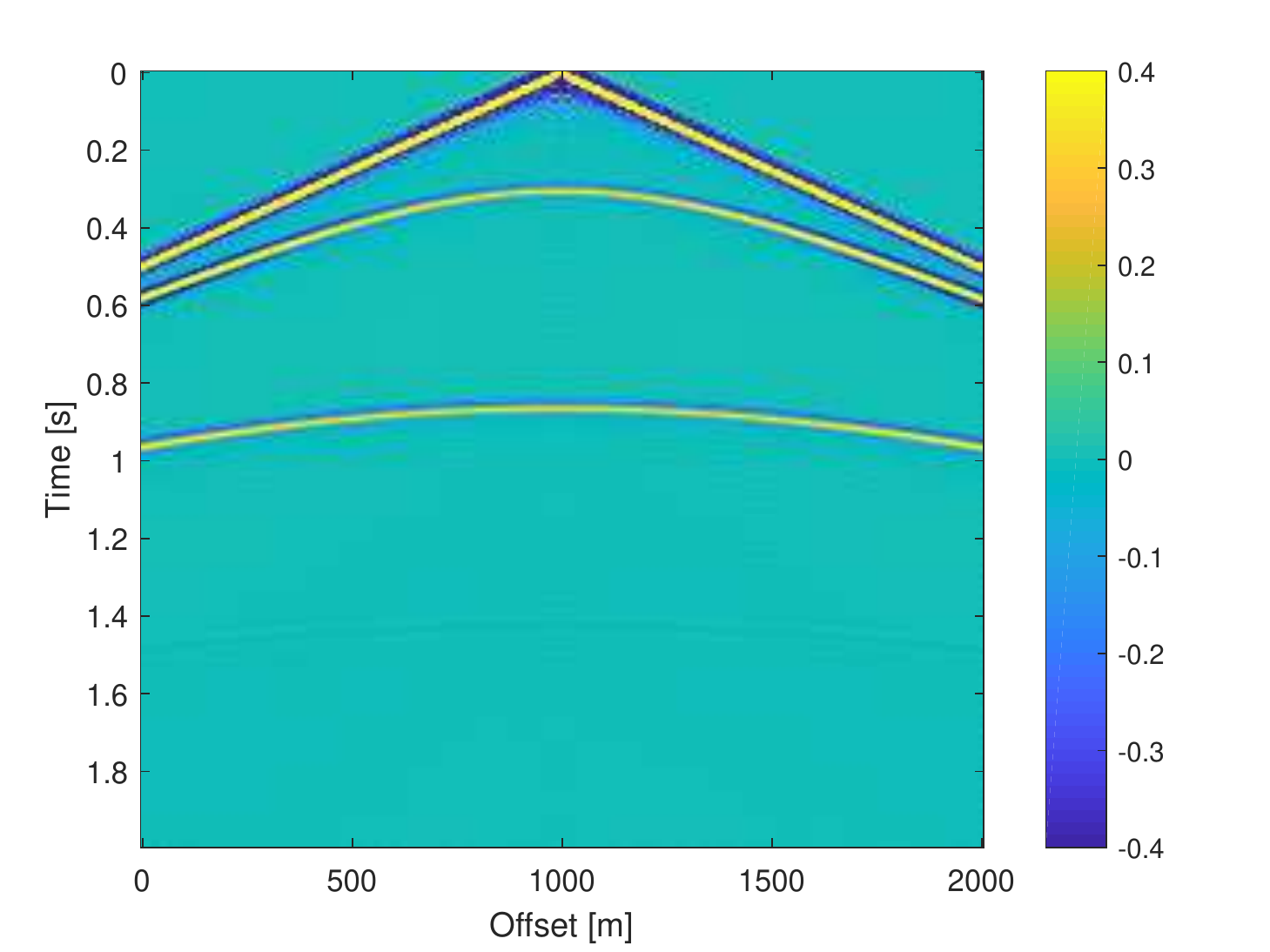}\\
(a)&(b)&(c)
\end{tabular}
\caption{ The common-shot-point records via different methods
with the frequency samples taken from $[1, 36]$:~(a) IDFT; (b)
L1M; (c) EL0M.} \label{figure:layered_model_36HZ}
\end{figure}

\begin{figure}[h]
\centering
\begin{tabular}{ccc}
\includegraphics[width=2in]{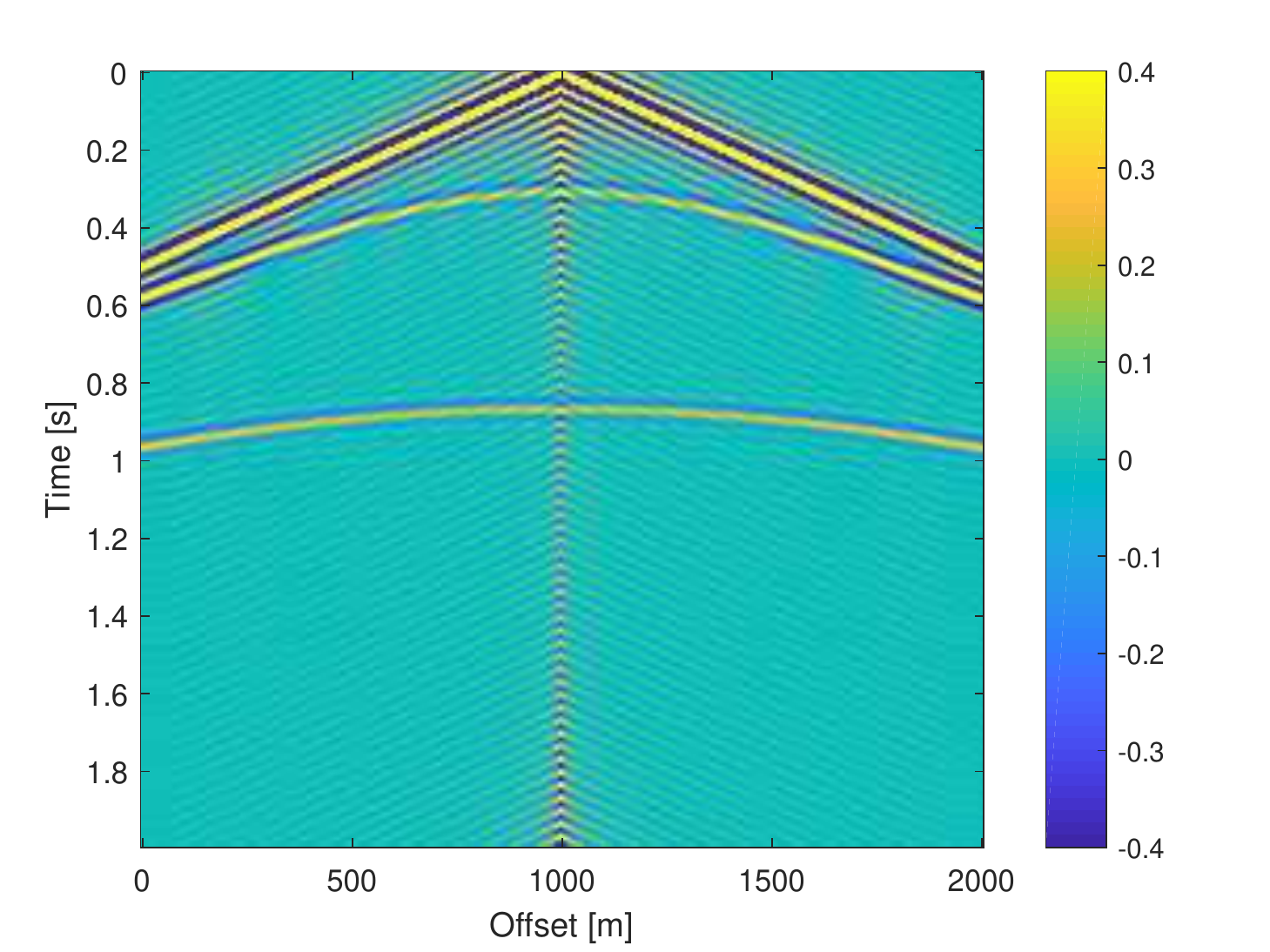}
&\includegraphics[width=2in]{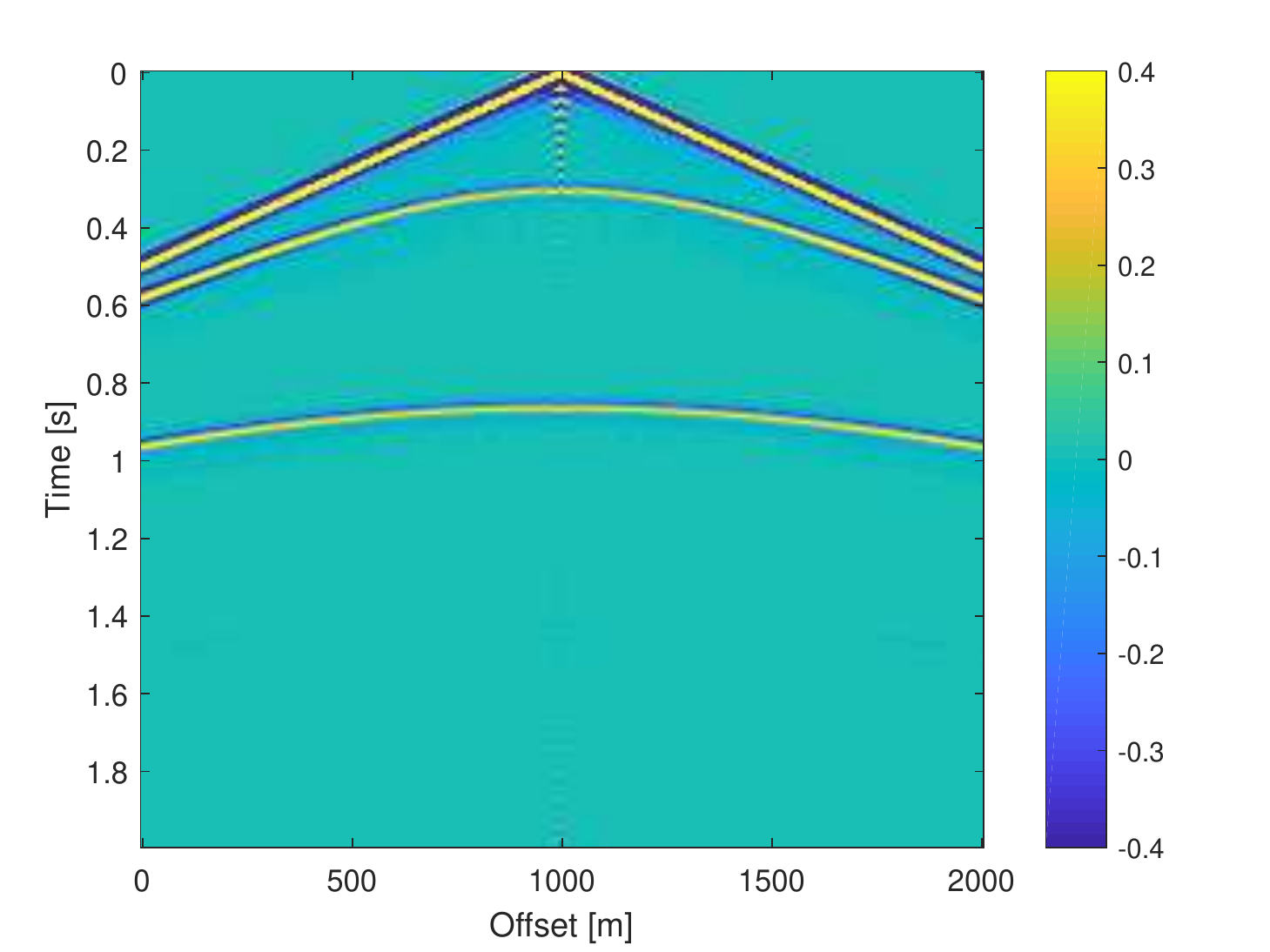}
&\includegraphics[width=2in]{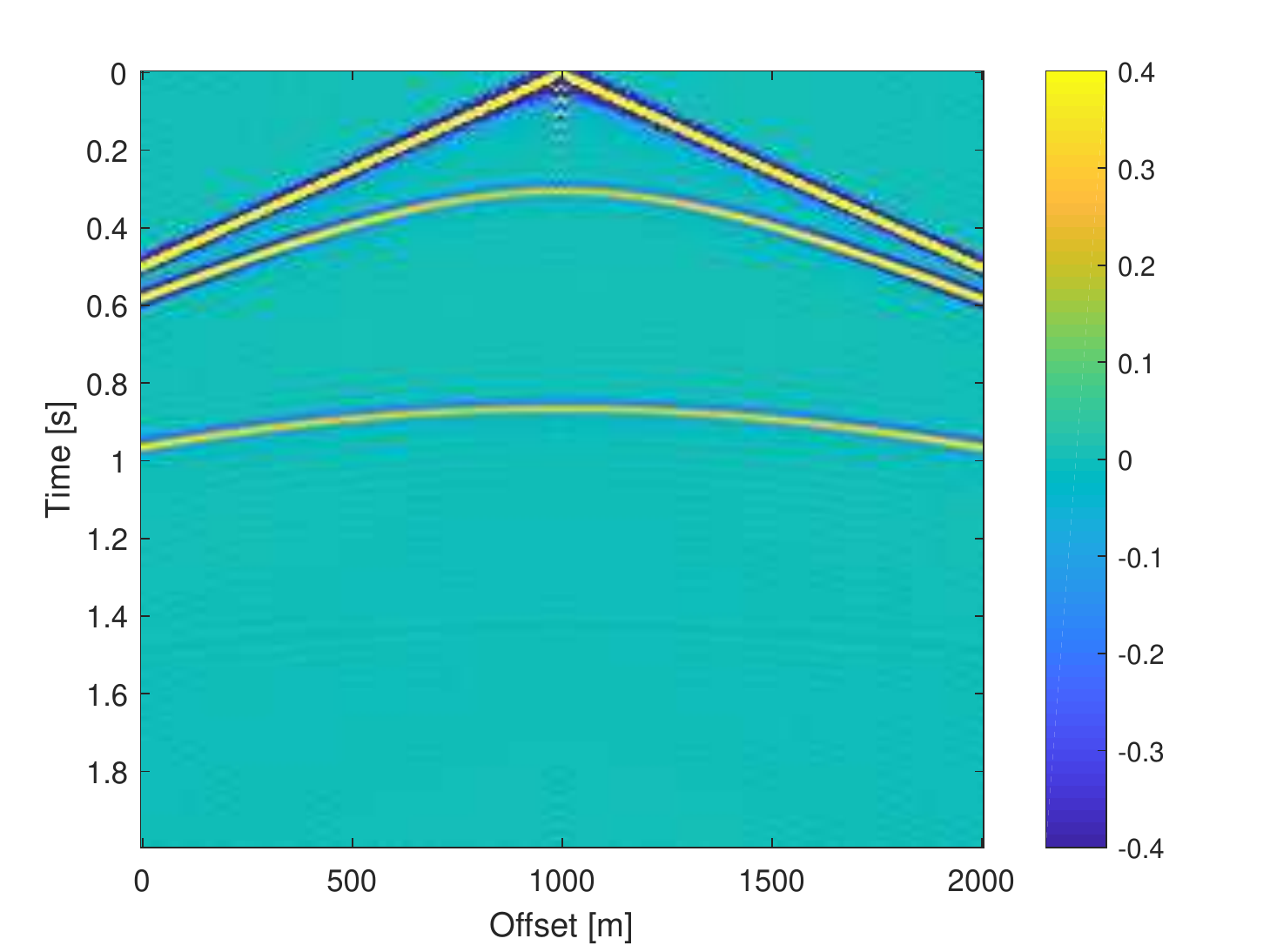}\\
(a)&(b)&(c)
\end{tabular}
\caption{ The common-shot-point records via different methods
with the frequency samples taken from $[1, 30]$:~(a) IDFT; (b)
L1M; (c) EL0M.} \label{figure:layered_model_30HZ}
\end{figure}

The common-shot-point records obtained by EL0M are compared to those by IDFT and L1M.  Figures \ref{figure:layered_model_60HZ},  \ref{figure:layered_model_42HZ}, \ref{figure:layered_model_36HZ} and  \ref{figure:layered_model_30HZ} present the common-shot-point records with frequencies sampled from intervals $[1, 60]$, $[1, 42]$, $[1, 36]$ and $[1, 30]$, respectively.
Note that all of the three numbers $42$, $36$, $30$ are smaller than $f_{nmax}:=60$ required by the Nyquist sampling theorem.
From part (a) of Figures \ref{figure:layered_model_42HZ},  \ref{figure:layered_model_36HZ}
and \ref{figure:layered_model_30HZ}, we find that nonphysical oscillations appear in the seismic wavefields obtained by the IDFT, as the Nyquist-Shannon criterion are not satisfied, and as $f_{max}$ reduces, the oscillations become stronger. As shown in parts (b) and (c) of Figures \ref{figure:layered_model_42HZ},  \ref{figure:layered_model_36HZ}
and \ref{figure:layered_model_30HZ}, the direct waves of the source, the reflected waves of the top side of the second layer and the reflected waves of the bottom side of the second layer are displayed clearly in the seismic wavefields obtained by both L1M and EL0M. Furthermore, waves obtained by EL0M are clearer than those by L1M, as less nonphysical oscillations appear in part (c) of these figures. These demonstrate that EL0M outperforms  L1M and frequencies sampled from $[1, 30]$ with $\Delta f=0.4464$ are enough to restore the seismic wavefield by EL0M, which confirms the effectiveness of the proposed method.

\section{Conclusions}\label{sec:Conclusion}\setcounter{equation}{0}
We have developed a sparse regularization model based on the Moreau envelope of the $\ell_0$ norm under a tight framelet system for inversion of incomplete Fourier transforms and a fixed-point iteration algorithm to solve the model. We have also applied this proposed method to seismic wavefield modeling. We have established that the proposed fixed-point algorithm converges to a local minimizer of the non-convex, non-smooth model. Numerical results have verified that the proposed model outperforms significantly the model based on the $\ell_1$ norm. In the context of the seismic wavefield modeling, substantial numerical studies that we have conducted show that the proposed method, which requires data of only a few low frequencies and avoids solving the Helmholtz equations with large wave numbers, performs better than the method based on the $\ell_1$ norm, in terms of the SNR values and visual quality of the restored synthetic seismograms. They confirm that the proposed model is particularly suitable for the seismic wavefield modeling.

The proposed inverting incomplete Fourier transform method may be applicable to other applications such as MRI and seismic data restoration. Some MRI images such as angiograms are already sparse in the pixel representation, and more complicated images may have a sparse representation in some transform domain, for example, in terms of their wavelet coefficients. The paper
\cite{LDP} performed the reconstruction of sparse MRI by minimizing the $\ell_1$ norm of a transformed image, subject to data
fidelity constraints. The proposed model \eqref{model:general} with $\mathrm{env}_{\beta\|\cdot\|_0}$ as a measure of sparsity may also work well in this application. In addition, seismic data restoration is a useful tool in seismic exploration, and it is an ill-posed inverse problem. Due to the sparsity of seismic data in some transform domain, this problem can be transformed into a sparse optimization problem. Thus, our proposed method is also expected to work efficiently for this problem.

Finally, we comment that machine learning methods may be employed to train data driven filters for regularization when training data are available.  We will consider it as our future projects for applications in which training data are available.


\end{document}